\documentclass[12pt]{amsart}
\usepackage{graphicx} 
\usepackage{amsmath, amsthm, amssymb}
\usepackage{thmtools, thm-restate}
\usepackage{textcomp}
\usepackage{xcolor}
\usepackage{color-edits}
\usepackage[margin=1in]{geometry}
\usepackage{algorithm}
\usepackage[noend]{algpseudocode}
\usepackage{float}
\usepackage{multicol}
\usepackage{caption}
\usepackage[hypertexnames=false,hidelinks]{hyperref}
\usepackage{booktabs}
\usepackage{footnote}
\usepackage{float}
\usepackage{color-edits}
\usepackage{algorithm}
\usepackage[noend]{algpseudocode}
\usepackage{multirow}
\usepackage{array}
    \newcolumntype{P}[1]{>{\centering\arraybackslash}p{#1}}
    \newcolumntype{M}[1]{>{\centering\arraybackslash}m{#1}}
\usepackage{bm}
\usepackage{graphicx}
\usepackage{graphbox}

\usepackage{makecell}
\usepackage{subcaption}
\usepackage[inline]{enumitem}
\usepackage{adjustbox,lipsum}

\usepackage{tikz}
\usetikzlibrary{calc,fit,backgrounds}
\usepackage{cases}

\addauthor[Kartik]{kl}{blue}
\addauthor[Laura]{lm}{red}
\addauthor[Kelly]{ki}{orange}
\addauthor[Fabrizio]{fp}{blue}

\def\BibTeX{{\rm B\kern-.05em{\sc i\kern-.025em b}\kern-.08emT\kern-.1667em\lower.7ex\hbox{E}\kern-.125emX}}

\newcommand{\F}[0]{\mathbb{F}}

\newtheorem{theorem}{Theorem}[section]

\newtheorem{lemma}[theorem]{Lemma}

\newtheorem{corollary}[theorem]{Corollary}

\newtheorem{definition}[theorem]{Definition}
\newtheorem{proposition}[theorem]{Proposition}

\title{Kronecker Products, Polarity Quotients and Large Graph Constructions}
\author{Kelly Isham}
\address{Colgate University, Hamilton, NY}
\email{kisham@colgate.edu}
\author{Kartik Lakhotia}
\address{Intel, Santa Clara, CA}
\email{kartik.lakhotia@intel.com}
\author{Laura Monroe}
\address{Los Alamos National Laboratory, Los Alamos, NM}
\email{lmonroe@lanl.gov}
\author{Fabrizio Petrini}
\address{Intel, Santa Clara, CA}
\email{fabrizio.petrini@intel.com}
\date{}

\begin{document}

\begin{abstract}
        In this paper, we establish a structural compatibility between the Kronecker product  of bipartite graphs that admit polarity and their polarity quotient, and provide a sharp upper bound on the diameter of these graphs.  
        
        For certain factor graphs, the diameter of the Kronecker product meets the upper bound on diameter, among them the generalized polygons. Generalized polygons with their polarity quotients have been notably used in the past to construct very large graphs.
        
        We apply the structural theorems in the paper to generalized polygons $\mathbb{G}_n(q,q)$ used as factor graphs, and build three new families of graphs of large order covering an infinite but sparse set of degrees, one of diameter $2$, one of diameter $3$ and one of diameter $5$. 
    
    These asymptotically approach a theoretical upper bound on graph size as orders $q$ and $r$ of the generalized polygon factors increase. 
    As an example, we develop one such family, derived from generalized quadrangles, and construct  new diameter-$3$ graphs of low degree that are larger than any previously known at their degrees.
\end{abstract}
\maketitle
\section{Introduction}\label{sec:intro_contributions}
\subsection{Main Results}
Here and throughout the paper, the \emph{Kronecker}, or \emph{direct}, product of graphs $H$ and $K$ has vertex set $H \times K$ and edges $((h_1, k_1) ,(h_2,k_2))$ if and only if $(h_1, h_2) \in E_H$ and $(k_1, k_2) \in E_K$. This product is denoted $H \otimes K$. We assume that the factor graphs $H$ and $K$ are undirected and connected with no multiedges, but we allow loops. A \emph{polarity} on a bipartite graph $\mathbb{G}(U = (A,B), \mathbb{E})$ is an involutional automorphism $\rho$ that reverses the partition. $\mathbb{H} \odot \mathbb{K}$ denotes one of the two isomorphic components of $\mathbb{H} \otimes \mathbb{K}$ (as defined in Section~\ref{sec:pautomorphisms_kronecker}). Finally, when we use the term \emph{degree of a graph}, we mean the maximum degree of all the vertices.

We begin with the isomorphism that is the basis for the theoretical results in this paper.

\begin{theorem}

\label{th:main_isomorphism}
        Let $\mathbb{H}(U_H=(A_H,B_H),\mathbb{E}_H)$ and $\mathbb{K}(U_K=(A_K,B_K),\mathbb{E}_K)$ be bipartite graphs that admit polarities $\rho_H$ and $\rho_K$, with polarity quotient graphs $P_H(\mathbb{H})$ and $P_K(\mathbb{K})$, and let $P(\mathbb{H} \odot \mathbb{K})$ be  the polarity quotient graph on $\mathbb{H} \odot \mathbb{K}$ with respect to the composed polarity $\rho = (\rho_H, \rho_K)$. Then $P(\mathbb{H} \odot \mathbb{K})$ is isomorphic to $P_H(\mathbb{H}) \otimes P_K(\mathbb{K}).$  
\end{theorem}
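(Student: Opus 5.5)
We need the definitions, which are not yet stated in the paper, so I will fix the standard ones. The polarity quotient $P_H(\mathbb{H})$ has vertex set $A_H$. Vertices $a,a'\in A_H$ are adjacent iff $(a,\rho_H(a'))\in\mathbb{E}_H$, and a loop at $a$ occurs exactly when $a\sim\rho_H(a)$. This relation is symmetric because $\rho_H$ is an involutive automorphism: $a\sim\rho_H(a')$ implies $\rho_H(a)\sim a'$. For $\mathbb{H}\odot\mathbb{K}$ I take the component of $\mathbb{H}\otimes\mathbb{K}$ with bipartition $A=(A_H\times A_K)\cup(B_H\times B_K)$ and $B=(A_H\times B_K)\cup(B_H\times A_K)$. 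More precisely, I take the bipartite graph with parts $A_H\times A_K$ and $B_H\times B_K$, since edges of $\mathbb{H}\otimes\mathbb{K}$ join $A_H\times A_K$ only to $B_H\times B_K$. The other component has parts $A_H\times B_K$ and $B_H\times A_K$. I will state this convention and check it against Section~\ref{sec:pautomorphisms_kronecker}. If that section uses the other component, the argument is symmetric.

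First I check that $\rho=(\rho_H,\rho_K)$, acting by $(h,k)\mapsto(\rho_H(h),\rho_K(k))$, is a polarity on $\mathbb{H}\odot\mathbb{K}$.
\begin{itemize}
\item It is an automorphism of $\mathbb{H}\otimes\mathbb{K}$, since it acts coordinatewise by automorphisms.
\item It is an involution.
\item It maps $A_H\times A_K$ onto $B_H\times B_K$ and back, so it preserves the chosen component and reverses its bipartition.
\end{itemize}
Hence $P(\mathbb{H}\odot\mathbb{K})$ is defined, with vertex set $A_H\times A_K$.

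Next I take the candidate isomorphism $\phi:A_H\times A_K\to A_H\times A_K$ to be the identity. The vertex sets of $P_H(\mathbb{H})\otimes P_K(\mathbb{K})$ and $P(\mathbb{H}\odot\mathbb{K})$ coincide, so $\phi$ is a bijection. It remains to compare adjacency, loops included. In $P(\mathbb{H}\odot\mathbb{K})$, the vertices $(a,c)$ and $(a',c')$ are adjacent iff $(a,c)$ is adjacent in $\mathbb{H}\otimes\mathbb{K}$ to $\rho(a',c')=(\rho_H(a'),\rho_K(c'))$. By the definition of the Kronecker product, this holds iff $(a,\rho_H(a'))\in\mathbb{E}_H$ and $(c,\rho_K(c'))\in\mathbb{E}_K$. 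That is exactly $a\sim a'$ in $P_H(\mathbb{H})$ and $c\sim c'$ in $P_K(\mathbb{K})$, which is adjacency in the Kronecker product $P_H(\mathbb{H})\otimes P_K(\mathbb{K})$.

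Taking $(a',c')=(a,c)$ in the same chain shows that loops also correspond: a loop in the quotient of the product occurs iff loops occur in both factor quotients. This matches the Kronecker product with loops allowed.

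The main obstacle is the set-up rather than the computation. The paper may define $\mathbb{H}\odot\mathbb{K}$ as the component with parts $A_H\times B_K$ and $B_H\times A_K$, or it may define the quotient on orbits $\{v,\rho(v)\}$ instead of on one side of the bipartition. In the orbit version, each orbit $\{v,\rho v\}$ contains exactly one vertex of $A$, so I would first identify orbits with $A$-vertices.

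In the other-component version, the quotient vertex set is $A_H\times B_K$. I would then apply $\mathrm{id}\times\rho_K$ to identify $A_H\times B_K$ with $A_H\times A_K$. The adjacency check needs the relation $b\sim\rho_K(b')$ on $B_K$. This corresponds, via $\rho_K$, to the relation $\rho_K(b)\sim b'$ on $A_K$, because $\rho_K$ is an automorphism. The remaining steps of the calculation are unchanged.
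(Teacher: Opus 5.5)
Your proof is correct and is essentially the paper's argument. The paper restricts to the component $(A_H\times A_K)\cup(B_H\times B_K)$ and uses the map $[(h,k)]\mapsto([h]_H,[k]_K)$, which is your identity map once each orbit is identified with its representative in $A_H\times A_K$; it then checks adjacency by unwinding the Kronecker product definition exactly as you do, and your criterion $(a,\rho_H(a'))\in\mathbb{E}_H$ together with your direct treatment of the other component via $\mathrm{id}\times\rho_K$ make rigorous the representative-matching and ``without loss of generality'' steps that the paper handles in a sentence each.
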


Corollary~\ref{cor:main_isomorphism} then gives the polarity quotient of the entire Kronecker product:
$$
    P(\mathbb{H} \otimes \mathbb{K}) \cong (P_H(\mathbb{H}) \otimes P_K(\mathbb{K})) \sqcup (P_H(\mathbb{H}) \otimes P_K(\mathbb{K})).
$$

In effect, Theorem~\ref{th:main_isomorphism} and Corollary~\ref{cor:main_isomorphism} show that the Kronecker product is compatible with the polarity quotient 
on the class of bipartite graphs with polarity, as seen in Figure~\ref{fig:commutation_new}.  
\begin{figure}[htbp]
\centering

\resizebox{.7\textwidth}{!}{%

\begin{tikzpicture}[
    x=1cm,
    y=1cm,
    line cap=butt,
    line join=miter,
    every node/.style={
        font=\Large\boldmath,
        align=center
    },
    box/.style={
        draw=black!70,
        fill=white,
        line width=0.8pt,
        inner xsep=10pt,
        inner ysep=6pt
    },
    arrowlabel/.style={
        font=\Large\boldmath,
        inner sep=2pt
    },
    reflabel/.style={
        font=\Large,
        inner sep=2pt
    }
]


\def\arrowlinewidth{1.5pt}

\def\RightHead#1{%
    \fill
        (#1)
        -- ++(-9pt,5pt)
        -- ++(0pt,-10pt)
        -- cycle;
}

\def\LeftHead#1{%
    \fill
        (#1)
        -- ++(9pt,5pt)
        -- ++(0pt,-10pt)
        -- cycle;
}

\def\DownHead#1{%
    \fill
        (#1)
        -- ++(-5pt,9pt)
        -- ++(10pt,0pt)
        -- cycle;
}

\def\UpHead#1{%
    \fill
        (#1)
        -- ++(-5pt,-9pt)
        -- ++(10pt,0pt)
        -- cycle;
}


\coordinate (Hpos)  at (0,10.3);
\coordinate (Kpos)  at (3.4,10.3);
\coordinate (PHpos) at (0,0);
\coordinate (PKpos) at (3.4,0);

\coordinate (topcenter)    at (11.6,10.3);
\coordinate (uppercenter)  at (11.6,7.0);
\coordinate (middlecenter) at (11.6,3.55);
\coordinate (bottomcenter) at (11.6,0);

\coordinate (topright)    at (20.2,10.3);
\coordinate (bottomright) at (20.2,0);


\node[inner sep=0pt] (H) at (Hpos)
    {$\mathbb H$};

\node[inner sep=0pt] (K) at (Kpos)
    {$\mathbb K$};

\node[inner sep=0pt] (PH) at (PHpos)
    {$P_H(\mathbb H)$};

\node[inner sep=0pt] (PK) at (PKpos)
    {$P_K(\mathbb K)$};


\begin{scope}[on background layer]

\node[
    draw=black!70,
    fill=white,
    line width=0.8pt,
    inner xsep=18pt,
    inner ysep=6pt,
    fit=(H)(K)
] (HK) {};

\node[
    draw=black!70,
    fill=white,
    line width=0.8pt,
    inner xsep=18pt,
    inner ysep=6pt,
    fit=(PH)(PK)
] (PHPK) {};

\end{scope}


\node[box] (HxK) at (topcenter)
    {$\mathbb H\otimes\mathbb K$};

\node[box] (HodotK) at (topright)
    {$\mathbb H\odot\mathbb K$};

\node[box] (PHxK) at (uppercenter)
    {$P(\mathbb H\otimes\mathbb K)$};

\node[
    box,
    inner xsep=13pt
] (Union) at (middlecenter)
    {$
      \bigl(
          P_H(\mathbb H)
          \otimes
          P_K(\mathbb K)
      \bigr)
      \sqcup
      \bigl(
          P_H(\mathbb H)
          \otimes
          P_K(\mathbb K)
      \bigr)
    $};

\node[
    box,
    inner xsep=13pt
] (PHtensorPK) at (bottomcenter)
    {$
      P_H(\mathbb H)
      \otimes
      P_K(\mathbb K)
    $};

\node[box] (PHodotK) at (bottomright)
    {$P(\mathbb H\odot\mathbb K)$};


\draw[line width=\arrowlinewidth]
    (HK.east) -- (HxK.west);

\RightHead{HxK.west}

\node[arrowlabel]
    at ($(HK.east)!0.5!(HxK.west)+(0,0.38)$)
    {$\otimes$};

\draw[line width=\arrowlinewidth]
    (HxK.east) -- (HodotK.west);

\RightHead{HodotK.west}
\RightHead{$(HodotK.west)+(-12pt,0pt)$}

\node[arrowlabel]
    at ($(HxK.east)!0.5!(HodotK.west)+(0,0.38)$)
    {$C$};


\coordinate (PHstart) at (H.center |- HK.south);
\coordinate (PHend)   at (PH.center |- PHPK.north);

\draw[line width=\arrowlinewidth]
    (PHstart) -- (PHend);

\DownHead{PHend}
\DownHead{$(PHend)+(0pt,12pt)$}

\node[arrowlabel,left=5pt]
    at ($(PHstart)!0.5!(PHend)$)
    {$P_H$};

\coordinate (PKstart) at (K.center |- HK.south);
\coordinate (PKend)   at (PK.center |- PHPK.north);

\draw[line width=\arrowlinewidth]
    (PKstart) -- (PKend);

\DownHead{PKend}
\DownHead{$(PKend)+(0pt,12pt)$}

\node[arrowlabel,left=5pt]
    at ($(PKstart)!0.5!(PKend)$)
    {$P_K$};


\draw[line width=\arrowlinewidth]
    (HxK.south) -- (PHxK.north);

\DownHead{PHxK.north}
\DownHead{$(PHxK.north)+(0pt,12pt)$}

\node[arrowlabel,left=5pt]
    at ($(HxK.south)!0.5!(PHxK.north)$)
    {$P$};

\draw[line width=\arrowlinewidth]
    (PHxK.south) -- (Union.north);

\UpHead{PHxK.south}
\DownHead{Union.north}

\node[arrowlabel,left=5pt]
    at ($(PHxK.south)!0.5!(Union.north)$)
    {$\cong$};

\node[reflabel,right=7pt]
    at ($(PHxK.south)!0.5!(Union.north)$)
    {%
        \hyperref[cor:main_isomorphism]
        {\textbf{Cor.~\ref*{cor:main_isomorphism}}}%
    };

\draw[line width=\arrowlinewidth]
    (Union.south) -- (PHtensorPK.north);

\DownHead{PHtensorPK.north}
\DownHead{$(PHtensorPK.north)+(0pt,12pt)$}

\node[arrowlabel,left=5pt]
    at ($(Union.south)!0.5!(PHtensorPK.north)$)
    {$C$};


\draw[line width=\arrowlinewidth]
    (HodotK.south) -- (PHodotK.north);

\DownHead{PHodotK.north}
\DownHead{$(PHodotK.north)+(0pt,12pt)$}

\node[arrowlabel,left=5pt]
    at ($(HodotK.south)!0.5!(PHodotK.north)$)
    {$P$};


\draw[line width=\arrowlinewidth]
    (PHPK.east) -- (PHtensorPK.west);

\RightHead{PHtensorPK.west}

\node[arrowlabel]
    at ($(PHPK.east)!0.5!(PHtensorPK.west)+(0,0.38)$)
    {$\otimes$};

\draw[line width=\arrowlinewidth]
    (PHtensorPK.east) -- (PHodotK.west);

\RightHead{PHodotK.west}
\LeftHead{PHtensorPK.east}

\node[arrowlabel]
    at ($(PHtensorPK.east)!0.5!(PHodotK.west)+(0,0.38)$)
    {$\cong$};

\node[reflabel]
    at ($(PHtensorPK.east)!0.5!(PHodotK.west)+(0,-0.48)$)
    {%
        \hyperref[th:main_isomorphism]
        {\textbf{Th.~\ref*{th:main_isomorphism}}}%
    };

\end{tikzpicture}
}
\caption{The commutative diagram of Theorem~\ref{th:main_isomorphism}. Arrows labeled $\otimes$ denote the Kronecker product, while all other arrows are graph homomorphisms. Double arrows denote surjective homomorphisms, each reducing graph order by a factor of $2$. $P, P_H, P_K$ are polarity quotients, and $C$ is the canonical homomorphism collapsing two isomorphic components into one.}

\label{fig:commutation_new}
\end{figure}

The remaining structural theorems and corollaries follow from Theorem~\ref{th:main_isomorphism}, by linking previously known inequalities on $P(\mathbb{H} \odot \mathbb{K})$ and $P_H(\mathbb{H}) \otimes P_K(\mathbb{K}).$
 Theorem~\ref{th:ulb} 
 gives an upper and lower bound on the diameter $D$ of these isomorphic graphs, in terms of the diameters of $\mathbb{H}$ and $\mathbb{K}$ and diameters of their polarity quotients $P_H(\mathbb{H})$ and $P_K(\mathbb{K})$.
\begin{theorem}
\label{th:ulb}
    Let $\mathbb{H}$ and $\mathbb{K}$ be as in Theorem~\ref{th:main_isomorphism} with diameters $D(\mathbb{H})$ and $D(\mathbb{K}),$ and let their polarity quotient graphs have diameter $D(P_{H}(\mathbb{H}))$ and $D(P_{K}(\mathbb{K})).$  Then the isomorphic graphs $P_{H}(\mathbb{H}) \otimes P_{K}(\mathbb{K})$ and $P(\mathbb{H} \odot \mathbb{K})$ have diameter $D,$ where 
    $$
    \max(D(P_{H}(\mathbb{H})),D(P_{K}(\mathbb{K}))) \le D\le \max(D(\mathbb{H}),D(\mathbb{K}))-1.
    $$
\end{theorem}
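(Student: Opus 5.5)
Since Theorem~\ref{th:main_isomorphism} gives $P(\mathbb{H}\odot\mathbb{K}) \cong P_H(\mathbb{H})\otimes P_K(\mathbb{K})$, it suffices to bound the diameter of one model. I will prove the lower bound in the Kronecker model and the upper bound in the quotient model $P(\mathbb{H}\odot\mathbb{K})$.

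\textbf{Lower bound.} Each coordinate projection $P_H(\mathbb{H})\otimes P_K(\mathbb{K})\to P_H(\mathbb{H})$ is a graph homomorphism, so a walk of length $\ell$ in the product projects to a walk of length $\ell$ in each factor. Given vertices $x,y$ of $P_H(\mathbb{H})$ at distance $D(P_H(\mathbb{H}))$, any pair $(x,k),(y,k')$ at distance at most $D$ yields a walk from $x$ to $y$ of length at most $D$. Hence $D\ge D(P_H(\mathbb{H}))$, and symmetrically $D \ge D(P_K(\mathbb{K}))$. This requires $D$ to be finite, i.e.\ the product to be connected. That follows from the upper bound, or from the standard fact that it is connected when the factors are connected and not both bipartite; polarity quotients of connected graphs typically have absolute points or odd cycles.

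\textbf{Upper bound.} The plan has three steps.
\begin{enumerate}
\item \emph{Quotient distances.} Vertices of $P(\mathbb{G})$ are orbits $\{v,\rho(v)\}$, and I will identify each with its representative in part $A$. Two such vertices $a,a'$ are adjacent iff $a\sim\rho(a')$ in $\mathbb{G}$. More generally, a walk of length $\ell$ from $a$ to $a'$ in $P(\mathbb{G})$ lifts to a walk of length $\ell$ in $\mathbb{G}$ from $a$ to either $a'$ or $\rho(a')$, with the endpoint determined by the parity of $\ell$. So $d_P(a,a')\le \min\bigl(d(a,a'),\,d(a,\rho(a'))\bigr)$.
\item \emph{Parity reduction.} One of $a'$ and $\rho(a')$ lies in $A$ and the other in $B$. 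Since $\mathbb{G}$ is bipartite, the two distances have opposite parities. If $\mathbb{G}$ has diameter $D_G$, both are at most $D_G$, and they cannot both equal $D_G$. Hence $d_P(a,a')\le D_G-1$ for any connected bipartite $\mathbb{G}$ with polarity.
\item \emph{Apply to $\mathbb{H}\odot\mathbb{K}$.} This reduces the problem to showing $D(\mathbb{H}\odot\mathbb{K})\le \max(D(\mathbb{H}),D(\mathbb{K}))$, with the composed polarity restricted to the component.
\end{enumerate}

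\textbf{Main obstacle: the product-component diameter.} In the component, $(h,k)$ and $(h',k')$ are joined by a walk of length $\ell$ iff there are walks of length $\ell$ in both factors. In a bipartite factor, $h$ and $h'$ are joined by walks of every length $\ell\ge d(h,h')$ of the right parity, obtained by bouncing back and forth on an edge. Within $\mathbb{H}\odot\mathbb{K}$ the parities of $d(h,h')$ and $d(k,k')$ agree, so the distance is $\max(d(h,h'),d(k,k'))\le\max(D(\mathbb{H}),D(\mathbb{K}))$.

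I will check that the component convention of Section~\ref{sec:pautomorphisms_kronecker} indeed fixes this parity consistency, namely $A_H\times A_K\cup B_H\times B_K$. I will also check that the composed polarity preserves that component: it maps $A_H\times A_K$ to $B_H\times B_K$, so it swaps the bipartition of $\mathbb{H}\odot\mathbb{K}$ as required. Combining the three steps gives $D\le\max(D(\mathbb{H}),D(\mathbb{K}))-1$.
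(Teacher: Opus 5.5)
Your proposal is correct and follows the paper's route. The lower bound is the projection argument behind Proposition~\ref{prop:diam_kron}. The upper bound combines $D(P(\mathbb{G}))\le D(\mathbb{G})-1$ (Proposition~\ref{prop:polarity_props}(2)) with $D(\mathbb{H}\odot\mathbb{K})\le\max(D(\mathbb{H}),D(\mathbb{K}))$ (Proposition~\ref{prop:redkronecker_stats}(2)), exactly as in Lemma~\ref{lemma:diam_ub}. The only difference is that you prove these cited ingredients directly, via the parity argument and walk padding in the component, instead of quoting Delorme and Pu\v{s}/Hell, and those arguments are sound.
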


Corollary~\ref{cor:ulb} then gives an exact diameter for the isomorphic graphs in Theorem~\ref{th:main_isomorphism} when the diameters of the parent graphs and their polarity quotients differ by $1$. This diameter is
$$
    \max(D(\mathbb{H}),D(\mathbb{K}))-1.
$$

The operations here extend to an arbitrary number of bipartite graphs with polarity, so Theorems~\ref{th:main_isomorphism} and \ref{th:ulb} can be extended to multiple such graphs.

Any pair of graphs satisfying the conditions of Corollary~\ref{cor:ulb} may be used to obtain new graphs whose exact diameter is known. Generalized $n$-gons $\mathbb{G}_n(q,q)$ admitting polarity are an important example. It is well known that thick generalized $n$-gons have diameter $n$ and their polarity quotients have diameter $n-1.$ We extend this to thin generalized $n$-gons, using an appropriate polarity.  
In particular, Corollary~\ref{cor:main_diameter} shows that the isomorphic graphs $P(\mathbb{G}_m \odot \mathbb{G}_n)$ and $P_m(\mathbb{G}_m) \otimes P_n(\mathbb{G}_n)$ have diameter $D=\max(m,n)-1.$

Polarity quotients of generalized $n$-gons have been used in the past in various constructions \cite{erdosrenyi1962, delorme_french_polys_replication_85, delorme_opp_85, PolarStar_23} to build very large graphs approaching the Moore bound on graph size. 
We apply Theorem \ref{th:main_isomorphism} and Corollary \ref{cor:main_diameter} to generalized polygons and establish new families of large graphs.

In the following theorem and throughout the paper, $\overline{G}$ denotes the graph $G$ with self-loops removed, giving a consistent basis for graph-size comparison with graphs in the literature. 

\begin{theorem}
\label{th:mb}
    Let $\mathbb{G}_n(q,q)$ and $\mathbb{G}_n(r,r)$ be incidence graphs of generalized $n$-gons with 
    $n \in \{3,4,6\}$, where $n,$ $q$ and $r$ are such that $\mathbb{G}_n(q,q)$ and $\mathbb{G}_n(r,r)$ admit polarities. Let $\mathcal{G}_{n,q}$ be the family of composed-polarity quotient graphs $\overline{P(\mathbb{G}_n(q,q) \odot \mathbb{G}_n(r,r))}\cong \overline{P_q(\mathbb{G}_n(q,q)) \otimes P_r(\mathbb{G}_n(r,r))},$
     where $q$ is fixed and where $P_q$ and $P_r$ are quotients with respect to non-rotation polarities. Then $\mathcal{G}_{n,q}$ is an infinite family of graphs of degree $(q+1)(r+1)$ and diameter $n-1$ that approaches the fraction
     $f_q$ of the Moore bound 
    as admissible $r \rightarrow \infty$, where    \begin{equation}\label{eq:limits}
        f_q = \frac{\sum_{i=0}^{n-1}{q^i}}{(q+1)^{n-1}}.
    \end{equation}
\end{theorem}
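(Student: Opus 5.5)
The plan is to split the statement into three parts: the isomorphism together with the degree, the diameter, and the asymptotic fraction of the Moore bound. Each part reduces to facts already stated or to standard counting.

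\emph{Isomorphism and degree.} The isomorphism $P(\mathbb{G}_n(q,q)\odot\mathbb{G}_n(r,r))\cong P_q(\mathbb{G}_n(q,q))\otimes P_r(\mathbb{G}_n(r,r))$ is exactly Theorem~\ref{th:main_isomorphism}. Removing self-loops commutes with any isomorphism, so the barred graphs are isomorphic as well. For the degree I use that $\mathbb{G}_n(q,q)$ is $(q+1)$-regular. In the polarity quotient, a vertex $\{x,\rho(x)\}$ is adjacent to the images of the $q+1$ neighbours of $x$, counting a loop when $x\sim\rho(x)$, i.e.\ when $x$ is absolute. A Kronecker product of a $(q+1)$-regular and an $(r+1)$-regular graph (loops counted once) is $(q+1)(r+1)$-regular. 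So every vertex has degree $(q+1)(r+1)$, minus one exactly when both coordinates carry loops. The maximum degree is therefore $(q+1)(r+1)$, attained at any vertex with at least one non-absolute coordinate; non-absolute points exist since the absolute points of a polarity form a small set.

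\emph{Diameter and infinitude.} The diameter $n-1$ comes from Corollary~\ref{cor:main_diameter} with $m=n$. Removing loops does not change distances, so the barred graph has the same diameter. For infinitude I use that, for each $n\in\{3,4,6\}$, infinitely many $r$ admit a (non-rotation) polarity. For $n=3$ this covers all prime powers $r$. For $n=4$ it covers $r=2^{2k+1}$. For $n=6$ it covers $r=3^{2k+1}$. Distinct $r$ give distinct degrees, hence infinitely many graphs.

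\emph{Order and Moore bound.} The incidence graph $\mathbb{G}_n(q,q)$ has $2\sum_{i=0}^{n-1}q^i$ vertices, so its polarity quotient has $N_q=\sum_{i=0}^{n-1}q^i$ vertices. The product therefore has order $N_qN_r$. With $d=(q+1)(r+1)$ and diameter $k=n-1$, the Moore bound is $M=1+d\sum_{j=0}^{k-1}(d-1)^j$, which behaves as $d^{k}\bigl(1+O(1/d)\bigr)$ as $d\to\infty$. Now $N_r=r^{n-1}(1+O(1/r))$ and $d^{n-1}=(q+1)^{n-1}r^{n-1}(1+O(1/r))$. Hence
\[
\frac{N_qN_r}{M}\longrightarrow \frac{\sum_{i=0}^{n-1}q^i}{(q+1)^{n-1}}=f_q \qquad (r\to\infty).
\]

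\emph{Main obstacle.} The only delicate point is the diameter step. It relies on Corollary~\ref{cor:main_diameter} applying with non-rotation polarities, in particular that each quotient has diameter exactly $n-1$ while the $n$-gon has diameter $n$. If needed, I would verify this directly via Corollary~\ref{cor:ulb}. For the upper bound, two points of a generalized $n$-gon are at distance at most $n$, and opposite elements can be handled by choosing the representative of $\{x,\rho(x)\}$. The lower bound needs a pair at quotient distance $n-1$, which I would obtain by choosing non-absolute elements.
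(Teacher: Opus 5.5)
Your proposal is correct and follows essentially the same route as the paper: the isomorphism from Theorem~\ref{th:main_isomorphism}, the diameter from Corollary~\ref{cor:main_diameter}, the order $N_qN_r$ and degree $(q+1)(r+1)$ (which the paper packages as Lemma~\ref{lemma:stats_gen_poly_kron_nge3}), and then a leading-term comparison in $r$ against the Moore bound. The differences are minor: you compute the degree directly on the $P_q\otimes P_r$ side, counting loops once, where the paper uses Proposition~\ref{prop:redkronecker_stats}(1) and Lemma~\ref{lemma:polarity_props_diam_equality} on the $\odot$ side; your ``absolute points form a small set'' is exactly Proposition~\ref{prop:non-abs_pts_new}; and you check explicitly that the family is infinite, which the paper leaves implicit.
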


There is only one other $n$ for which thick generalized $n$-gons admit polarity. Generalized $2$-gons give rise to composed-polarity quotients whose degrees $(q+1)(r+1)-1$ do not follow the Theorem~\ref{th:mb} formula. These are complete graphs so meet the Moore bound.

Theorem~\ref{th:mb} implies that as both $q,r \rightarrow \infty$, the family of graphs $\mathcal{G}_n$ asymptotically approaches the Moore bound. This is shown in Corollaries~\ref{cor:main_corollary} and ~\ref{cor:main_corollary2}. The asymptotic behavior of the diameter-$3$ $\mathcal{G}_{4,q}$ families built from generalized quadrangles can be seen in Figure~\ref{fig:delorme_sgq}. 

In Sections~\ref{sec:main_newfamilies} and \ref{sec:replication}, we construct new infinite families of large diameter-$3$ graphs from generalized quadrangles using the above results, and in Section~\ref{sec:quadexample}, we use the above machinery with the known replication technique to construct diameter-$3$ graphs larger than any previously known for their degree. Their orders may be seen in   Table~\ref{table:new_points} and Figure \ref{fig:comb_wiki}. We note that most databases of large graphs exist for degrees up to 20. In Table \ref{table:new_points}, we list our constructions that are larger than those known in these tables, up to degree 20. 
\begin{table}[!htbp]
\centering
\caption{New diameter-$3$ graphs that are larger for their degree than those cited in previous references, as of August 2026. It is worth noting that the graphs in this paper approach the Moore-bound asymptotically whereas the diameter-$3$ PolarStar graphs are asymptotically limited to roughly $30\%$ of Moore-bound. 
}

\label{table:new_points}

\begin{tabular}{ccccc}
\toprule
\textbf{Degree}  &\textbf{Deg-Diam Table ~\cite{comb_wiki_degdiam_general}} & \textbf{PolarStar~\cite{PolarStar_23}} &\textbf{Order of New Graphs} \\ \midrule
18 & 1620 & 1830 & 2340 \\
19 & 1638 & 2128 & 2470 \\
20 & 1958 & 2394 & 2600 \\
 \bottomrule
\end{tabular} 

\end{table}

In this paper, we address cases that are not always treated in the existing literature. We discuss digons, or degenerate generalized $2$-gons, in order to obtain a theory of the Kronecker product and polarity quotients covering all generalized $n$-gons admitting polarity. Likewise, we discuss thin generalized polygons in detail, and use them, with an appropriate polarity, in the construction of the large graphs in Table~\ref{table:new_points}.

\subsection{Paper Organization}
We give basic notation and conventions in Section~\ref{sec:notation}, preliminaries in Section~\ref{sec:preliminaries}, definitions and previously established theorems needed for our results in Section~\ref{sec:background}, proofs of the structural theorems in Section~\ref{sec:main_th}, applications of these theorems to generalized polygons in Section~\ref{sec:genpolys_complete} and constructions of new largest graphs in Section~\ref{sec:quadexample}.

\section{Notation and Conventions}\label{sec:notation}
\subsection{Notation}
Throughout this paper, we use the notation in Table~\ref{table:notation}.
\begin{table}[!htbp]
\centering
\caption{Notation table.}

\label{table:notation}

\renewcommand{\arraystretch}{1.25}
\begin{tabular}{ll}
\toprule
\textbf{Notation}  &\textbf{Concept}  \\ \midrule
$\mathbb{Z}_{>0}$ & the positive integers  \\
$A\times B$ & the Cartesian product  of two sets $A$ and $B$ \\
$G(V,E)$ & an undirected graph with vertex set $V$ and edge set $E$  \\
$D(G)$ & the diameter of a graph \\
$\Delta(G)$ & the degree of a graph: the maximum degree of its vertices \\
$\overline{G}$ &  the graph obtained by dropping any self-loops from the graph $G$\\
$\mathbb{G}$ & ``blackboard bold'' denotes a bipartite graph \\
$H \otimes K$ & the Kronecker product of two connected graphs $H$ and $K$ \\
$\mathbb{H} \odot \mathbb{K}$ & the Reduced-Kronecker product  of connected bipartite graphs $\mathbb{H}$ and $\mathbb{K}$ \\
$\rho$ & a polarity on a bipartite graph \\
$P(\mathbb{G})$ & the polarity quotient graph of a bipartite graph $\mathbb{G}$ admitting polarity \\
$\mathbb{G}_n(s,t)$ & the generalized $n$-gon of order $(s,t)$ \\
$\mathcal{G}_{n,q}$ &the family of graphs $\overline{P(\mathbb{G}_n(q,q) \odot \mathbb{G}_n(r,r))},$ with $n$ and $q$ fixed\\
$\mathcal{G}_{n}$ &the family of graphs $\overline{P(\mathbb{G}_n(q,q) \odot \mathbb{G}_n(r,r))},$ with $n$ fixed\\
\bottomrule
\end{tabular} 

\end{table}

\subsection{Kronecker product}
The Kronecker product is a common graph construction that is referred to in other papers as the \emph{direct} \cite{hammack2011handbook, congruences_loop_1, congruences_loop_2}, \emph{categorical} or \emph{tensor} \cite{hammack2011handbook} product of graphs. In \cite{delorme_opp_85}, the term \emph{graph conjunction} is used, and the term ``Kronecker product'' instead denotes one of the two components of the full product.

We assume that for the Kronecker product $H \otimes K$, both $H$ and $K$ are connected.

In some papers, including the original Weichsel paper \cite{kronecker_1962}, the Kronecker product applies only to simple graphs with no self-loops. We work in the category of graphs allowing self-loops but no multiple edges, and using the definition of the Kronecker product from \cite[Section 5.3]{hammack2011handbook}. 
Self-loops may appear in the polarity quotients fundamental to this paper; these self-loops are then used in the constructions discussed here, giving extra edges in the Kronecker product and enabling the theorems in the paper.

\subsection{Self-loops}\label{sec:self_loop_convention}
As mentioned above, self-loops that occur in factor graphs are retained for the Kronecker product.
On the other hand, the degree-diameter comparisons in the application portion of the paper make no use of self-loops, as
the presence of self-loops has no effect on the diameter and size of the graph and only adds to the degree. Thus, after constructions are completely finished, we delete self-loops.  We then use the resulting simple graph in the comparison to compare with simple graphs from the degree-diameter literature, giving a consistent basis for comparison. 

Note that the overline notation $\overline{G}$ signifies that self-loops are deleted \emph{only after the entire construction under the overline is complete}.

\subsection{Generalized polygons}
When we refer to the graph of a generalized $n$-gon of order $(s,t)$, we will by default use its bipartite point-line incidence-graph representation. 

\section{Preliminaries}\label{sec:preliminaries}
\subsection{The Kronecker Product and Graph Quotients}\label{sec:prelim_product_quotients}
Several papers have appeared addressing quotients on the Kronecker product.

Delorme~\cite{delorme_opp_85} devised a large-graph construction using the polarity quotient of a generalized polygon and its opposite. This is discussed in more detail in Section~\ref{sec:prior_kronecker_constructions}.

Jha et al.\ discussed graphs having a bipartition-reversing automorphism, and showed that if one of the graphs in a Kronecker product has this property, the product has two isomorphic components \cite{iso_comp_kronecker_bipartite_1997}. Hammack then showed the converse \cite{iso_comp_kronecker_bipartite_2006}. We explore this in detail for the special case of polarity, which is important in the field of finite geometry.

Hammack also studied product cancellation of factors in Kronecker products, and showed conditions on $A$ and $B$ for which $A\otimes C \cong B\otimes C$ implies $A \cong B$ for any $C$ \cite{bipartition_reversing_involutions_08}. Abay-Asmerom et al.\ went on to discuss Kronecker product factorization in the same context \cite{bipartition_reversing_involutions_10}. We have obtained an alternative proof of Theorem~\ref{th:main_isomorphism} using the factorization in \cite{bipartition_reversing_involutions_10} as a starting point. However, in so doing, it is still necessary to explicitly track polarities through the factorizations of both factor graphs and their Kronecker product. This alternative proof thus follows the same path as the direct proof presented here, but with the actions of the polarities encoded in the factorizations, rather than explicitly described as in our proof.

The interaction of graph products and quotients was studied in a series of papers on abstract graph congruences. Broere, Heidema and Pretorius \cite{congruences_no_loop}, Veldsman \cite{congruences_loop_1}, and Broere, Heidema and Veldsman \cite{congruences_loop_2} gave graph-theoretic versions of the Isomorphism Theorems of Algebra (see e.g., \cite{gratzer_universal_alg}), and as a consequence, derive a graph-theoretic version of Birkhoff's Subdirect Decomposition Theorem \cite{Birkhoff1944}: 

\begin{proposition}\cite[Theorem 2.5]{congruences_loop_1} \cite[Theorem 2.8]{congruences_loop_2}\label{prop:congruence_paper}
	Let $I$ be an index set, and for each $i \in I,$ let $\theta_i$ be a congruence on a graph $G$ with $\theta:=\bigcap\limits_{i\in I} \theta_i.$ Then $G/\theta$ is
isomorphic to a subdirect product of the $G/\theta_i,$ $i \in I.$
\end{proposition}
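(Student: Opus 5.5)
The plan is to build the obvious map from $G/\theta$ into the product of the quotients and check that it identifies $G/\theta$ with a subgraph whose projections are onto. First I fix the conventions I will use, which I take to match those of \cite{congruences_loop_1, congruences_loop_2}:
\begin{itemize}[label={}]
\item a congruence $\theta_i$ on $G$ is an equivalence relation on $V(G)$;
\item the quotient $G/\theta_i$ has the classes $[v]_{\theta_i}$ as vertices, and an edge $[u]_{\theta_i}[v]_{\theta_i}$ (possibly a loop) whenever some $u'\in[u]_{\theta_i}$ and $v'\in[v]_{\theta_i}$ satisfy $u'v'\in E(G)$;
\item a subdirect product of the family $(G/\theta_i)_{i\in I}$ is a subgraph $S$ of the Kronecker (direct) product $\prod_{i\in I} G/\theta_i$ such that every projection $\pi_i$ maps $S$ onto $G/\theta_i$, on vertices and on edges.
\end{itemize}
An intersection of equivalence relations is again an equivalence relation, so $\theta=\bigcap_i\theta_i$ is a congruence and $G/\theta$ is defined.

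I then define $\varphi\colon V(G/\theta)\to V\bigl(\prod_i G/\theta_i\bigr)$ by $\varphi([v]_\theta)=([v]_{\theta_i})_{i\in I}$.
\begin{itemize}[label={}]
\item \emph{Well defined.} This holds because $\theta\subseteq\theta_i$ for every $i$.
\item \emph{Injective.} If $([v]_{\theta_i})_i=([w]_{\theta_i})_i$, then $(v,w)\in\theta_i$ for all $i$, hence $(v,w)\in\theta$.
\item \emph{Homomorphism.} An edge $[u]_\theta[v]_\theta$ of $G/\theta$ is witnessed by some $G$-edge $u'v'$ with $u'\theta u$ and $v'\theta v$. Then $u'\theta_i u$ and $v'\theta_i v$ for every $i$, so the same $G$-edge witnesses $[u]_{\theta_i}[v]_{\theta_i}\in E(G/\theta_i)$ for all $i$. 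By the definition of the Kronecker product, which keeps loops, $\varphi([u]_\theta)\varphi([v]_\theta)$ is an edge of the product.
\end{itemize}

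Let $S$ be the image subgraph: its vertices are $\varphi(V(G/\theta))$ and its edges are exactly the images of edges of $G/\theta$. Since $\varphi$ is a vertex bijection onto $V(S)$ and, by construction, a bijection on edges, $\varphi\colon G/\theta\to S$ is an isomorphism.

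It remains to show that $S$ is subdirect. For each $i$, the composite $\pi_i\circ\varphi$ is the natural map $G/\theta\to G/\theta_i$, $[v]_\theta\mapsto[v]_{\theta_i}$. It is surjective on vertices, because every class of $\theta_i$ contains some $v$. It is surjective on edges, because every edge of $G/\theta_i$ is witnessed by a $G$-edge $u'v'$, and then $[u']_\theta[v']_\theta$ is an edge of $G/\theta$ mapping onto it. Hence each $\pi_i|_S$ is onto. Nothing here uses finiteness of $I$, since the direct product over any index set is defined coordinatewise.

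The main obstacle is not a computation but pinning down the conventions. The map $\varphi$ need not be onto an \emph{induced} subgraph: two classes may have images adjacent in every coordinate via different witnessing edges while no single $G$-edge witnesses adjacency in $G/\theta$. So I must take $S$ to be the image subgraph, with edges precisely the images of edges, and check that the cited notion of subdirect product allows arbitrary, not necessarily induced, subgraphs. I would also confirm that loops in the quotients are treated consistently with the loop-allowing Kronecker product of \cite[Section 5.3]{hammack2011handbook}. With these conventions fixed, the argument above is complete.
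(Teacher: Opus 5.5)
Your argument is correct. The natural map $[v]_\theta\mapsto([v]_{\theta_i})_{i\in I}$ is a well-defined injective homomorphism, and its composites with the projections are the quotient maps, which are surjective on both vertices and edges. Your caveat is also accurate and necessary: the image must be taken with exactly the image edges, not as an induced subgraph. The paper gives no proof of its own here; it quotes the result from Veldsman and Broere--Heidema--Veldsman, and yours is the standard Birkhoff-style argument that underlies those results.
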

This proposition shows that the quotient is isomorphic to some unspecified subdirect product of corresponding quotient graphs. In contrast, our Theorem~\ref{th:main_isomorphism} shows that the quotient is the full direct product, so is more specific. Its corollary, Corollary~\ref{cor:main_isomorphism}, is also outside the scope of Proposition~\ref{prop:congruence_paper}; $P(\mathbb{H} \otimes \mathbb{K})$ gives two disjoint copies of the full direct product, so cannot be a subdirect product.

Our theorems have an abstract flavor, like Proposition~\ref{prop:congruence_paper}, but are entirely graph-theoretic, addressing bipartite graphs and polarity.

\subsection{The Degree-Diameter Problem and the Moore Bound}\label{sec:moore_bound}
Given a degree $\Delta$ and diameter $D$, the \emph{degree-diameter problem} of graph theory consists of finding a simple graph $G(\Delta,D)$ having largest order possible. The Moore bound~\cite{hoffmansingleton1960} is a theoretical upper bound for the order of such a graph:
\begin{equation}\label{eq:MB}
    \textrm{MB}(\Delta,D) = 1 + \Delta\sum_{i=0}^{D-1} (\Delta-1)^i.   
\end{equation}
Simple graphs that reach this upper bound are called \emph{Moore graphs}. 

Few Moore graphs exist: these include the diameter-$1$ complete graphs, and the degree-$2$ odd cycles. There are also three diameter-$2$ graphs at degrees $2,3$ and $7$, and a hypothetical diameter-$2$ graph of degree $57$, completing the list of diameter-$2$ Moore graphs \cite{hoffmansingleton1960}.
Damerell \cite{Damerell1973OnMG} and Bannai and Ito \cite{Bannai1973OnFM} independently proved that no Moore graphs exist with diameter $D \ge 3$ and degree $\Delta\ge 3$. The above list of Moore graphs is thus complete.

Many researchers have worked to improve the Moore bound; however, for all but a few special families of graphs, MB$(\Delta, D)-2$ is the best currently known upper bound; see the Miller-\v{S}ir\'{a}\v{n} survey \cite{miller2012moore} and the references within. 

\subsection{Largest Known Graphs of a Given Degree and Diameter}\label{sec:largest_known}
Since the Moore bound has proven difficult to reduce, there have been extensive efforts in constructing largest known graphs for given $\Delta$ and $D$. This establishes lower bounds on the size of the largest possible graphs, and narrows the gap between the largest achieved graphs and the upper bound.
\begin{figure*}[!ht]
\centering
\includegraphics[width=.9\textwidth]{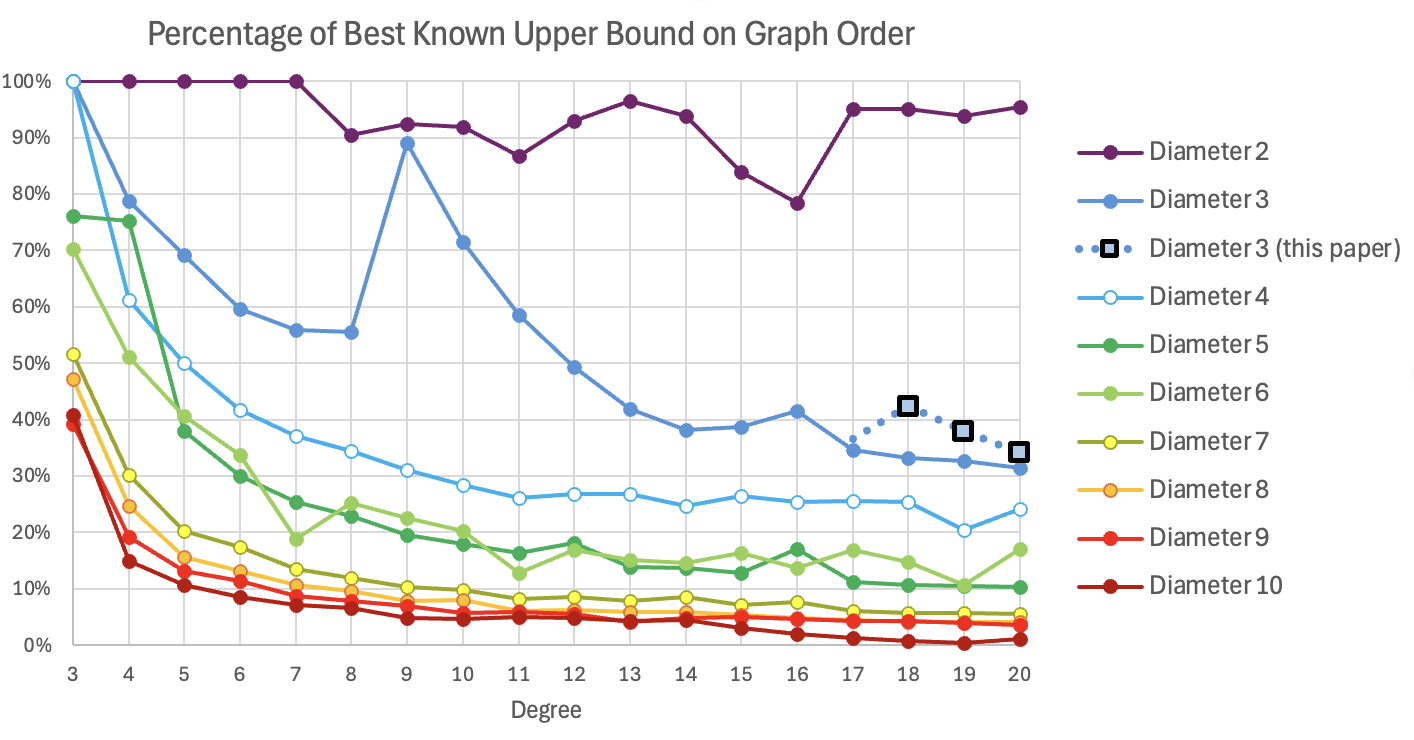}
\caption{The largest known graphs \cite{comb_wiki_degdiam_general, PolarStar_23, deg_diam_table_2026} in terms of the percentage of the best known upper bound on graph order \cite{miller2012moore}.  Graphs from this paper are marked with black squares (see Table~\ref{table:new_points}).  
Lines connecting data points do not represent real data, but are added to clearly distinguish trends for each diameter.}
\label{fig:comb_wiki}
\end{figure*}

In Figure~\ref{fig:comb_wiki}, we compare to graphs documented in Comellas' regularly maintained website \cite{deg_diam_table_2026} (most recently updated in August 2026,  and part of which was published in \cite{deg_diam_pub}). This covers degrees $\Delta \le 16$. We augment this with graphs of degrees $17-20$ from the earlier Degree-Diameter Table from the Combinatorics Wiki~\cite{comb_wiki_degdiam_general}, which was last updated in  2022. We also include 
several larger graphs of diameter $3$ and degrees $18-20$ from a $2023$ paper on a new network topology~\cite{PolarStar_23}. Figure~\ref{fig:comb_wiki} illustrates the largest known for small degrees and diameters, and shows new graphs from this paper denoted with square bullets.

\subsection{Previously Known Large Constructions}
\subsubsection{Constructions Based on Polarity}\label{sec:prior_polarity_constructions}
Several of the largest-known graphs are based on polarities of objects in finite projective spaces. 

Brown \cite{brown_1966} and Erd\"os and R\'enyi \cite{erdosrenyi1962} independently discovered a family of diameter-2 graphs that asymptotically achieve the Moore Bound. These are constructed by applying a polarity to the bipartite incidence graph of a finite projective plane, so these graphs exist for all degrees $q+1$ with $q$ a prime power.

Delorme noted in \cite{delorme_french_polys_replication_85} that polarity quotient of incidence graphs of certain generalized polygons asymptotically approach the Moore bound.  
   Using generalized quadrangles, he constructed a family of diameter-3 graphs that exist for degree $\Delta = 2^{2m+1} + 1$ with $m \in \mathbb{Z}_{\ge 0}$.  These graphs are based on ovoids in $\mathbb{P}^3(\F_q)$, see e.g. \cite[Section 7.3]{van_Maldeghem_ovoids} for more details. For diameter 5, he constructed a similar family of graphs using generalized hexagons. This family is also sparse, and these graphs only exist for degrees $\Delta=3^{2m+1}+1,$ $m \in \mathbb{Z}_{\ge 0}$.   

\subsubsection{Constructions Based on Kronecker Products}\label{sec:prior_kronecker_constructions}

Delorme constructed another family of diameter-$3$ graphs approaching the Moore bound in \cite{delorme_opp_85}, based on the polarity quotient (with any self-loops removed) of the Reduced-Kronecker product of a large bipartite graph $\mathbb{G}(\mathbb{U}=(\mathbb{A},\mathbb{B}),\mathbb{E})$ and its opposite $Opp(\mathbb{G}) = \mathbb{G}(\mathbb{U}=(\mathbb{B},\mathbb{A}),\mathbb{E})$, where the sets $\mathbb{A}$ and $\mathbb{B}$ are interchanged but adjacency is preserved. 
    Large graphs are constructed when the base graph $\mathbb{G}$ is large: for example, Delorme used the incidence graphs of the generalized quadrangles $\mathbb{G}_4(s, t)$ to construct graphs with order $(1+s)(1+t)(1+st)^2$ and degree $(1+s)(1+t)$. (Generalized quadrangles exist for $(s=q,t=q), (s=q-1, t=q+1), (s=q, t=q^2)$ and $(s=q^2, t=q^3)$ for prime powers $q$.)
    In \cite{delorme_opp_85}, the second factor $Opp(\mathbb{G}_n(s,t))$ is determined by the first factor $\mathbb{G}_n(s,t)$, unlike our construction $P_m(\mathbb{G}_m) \otimes P_n(\mathbb{G}_n)$, where the two factors may be any pair of generalized polygons that both admit polarity. Conversely, the factors in  \cite{delorme_opp_85} need not have a polarity. These two constructions thus produce different sets of graphs. These sets do overlap, but neither is contained in the other, as may be seen in Table~\ref{table:design_space_new}.

\section{Background: Products and Polarities}\label{sec:background}
In this section, we present a number of definitions and propositions that are already known or that follow immediately from known propositions, along with some discussion. These are used later in the proofs of the theorems of this paper.
\subsection{The Kronecker Product}\label{sec:kronecker}
\begin{definition} [Kronecker Product of Graphs] \label{def:kronecker} 
    Given two connected graphs $H(U_H, E_H)$ and $K(U_K, E_K)$ that permit self-loops but have no multi-edges, their Kronecker product is the graph $G(U_H\otimes U_K,E)$, where $((v_H,v_K), (w_H,w_K)) \in E$ if and only if $(v_H, w_H) \in E_H$ and $(v_K, w_K) \in E_K$. We denote this product by $H \otimes K$.
\end{definition}
The Kronecker product was introduced by Weichsel in \cite{kronecker_1962}, acting on simple graphs with no self-loops. In this paper, we use the definition from the \emph{Handbook of Product Graphs} \cite{hammack2011handbook}, which permits self-loops in the factor graphs. We also require the factors to be connected.

Under this definition of the Kronecker product, a self-loop in one of the factor graphs produces edges in the product graph that do not appear in the product when the factor-graph self-loops are deleted. As Veldsman~\cite{congruences_loop_1}  observes, disallowing self-loops greatly restricts the discussion of graph homomorphisms. In this paper, self-loops, with the extra edges they produce in the product, allow the isomorphism in Theorem~\ref{th:main_isomorphism} to hold.

\begin{proposition} \cite{hammack2011handbook} \label{prop:kronecker_props}
    Let $H$ and $K$ be graphs. Then $|V(H\otimes K)|= |V(H)||V(K)|.$
\end{proposition}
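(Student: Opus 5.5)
The plan is to count vertices directly from Definition~\ref{def:kronecker}. By definition, the vertex set of $H \otimes K$ is the Cartesian product $V(H) \times V(K)$, written $U_H \otimes U_K$ in the definition. Edges, self-loops and connectivity play no role in the vertex set, so the statement reduces to the cardinality of a Cartesian product of finite sets.

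First, I will identify the vertex set of $H\otimes K$ as $\{(v_H,v_K) : v_H \in V(H),\ v_K \in V(K)\}$. The definition's convention that vertices of the product are ordered pairs makes this immediate.

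Second, I will show that the map $(v_H,v_K)\mapsto (v_H,v_K)$ is a bijection between the product's vertex set and $V(H)\times V(K)$. I will then apply the standard count $|A\times B| = |A|\,|B|$. This count can be justified by partitioning $A\times B$ into the $|A|$ fibers $\{a\}\times B$, each of size $|B|$.

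There is essentially no obstacle here. The only point needing care is notational: the definition writes the vertex set as $U_H\otimes U_K$, and I must read this as the set-theoretic Cartesian product $U_H \times U_K$ rather than as a graph product. Once that is granted, the result is immediate. The same argument gives $|V(H\otimes K)| = |V(H)|\,|V(K)|$ even when loops are present, since loops affect only the edge set.
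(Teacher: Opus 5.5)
Your proposal is correct and is the standard argument: the paper gives no proof of its own and simply cites the \emph{Handbook of Product Graphs}, where the identity follows at once from the vertex set of $H\otimes K$ being the Cartesian product $V(H)\times V(K)$, exactly as you argue. The bijection step is just the identity map and can be dropped; the only substantive point is reading $U_H\otimes U_K$ in Definition~\ref{def:kronecker} as the set $U_H\times U_K$, which you already flag.
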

\begin{proposition} \label{prop:diam_kron}  \cite[Proposition 5.7]{hammack2011handbook}
    Let $H$ and $K$ be graphs of diameters $D(H)$ and $D(K)$. Then 
    $\max(D(H), D(K)) \le D(H \otimes K).$ 
    \end{proposition}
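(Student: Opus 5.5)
The plan is to prove the inequality $\max(D(H),D(K)) \le D(H\otimes K)$ directly from the definition of the Kronecker product, by projecting walks in the product onto each factor. The key observation is that the projection maps $\pi_H\colon (h,k)\mapsto h$ and $\pi_K\colon (h,k)\mapsto k$ are graph homomorphisms $H\otimes K \to H$ and $H\otimes K\to K$. This is immediate from Definition~\ref{def:kronecker}: if $((h_1,k_1),(h_2,k_2))$ is an edge of the product, then $(h_1,h_2)\in E_H$ and $(k_1,k_2)\in E_K$. Homomorphisms send walks to walks of the same length. Loops are allowed, so a projected step may be a loop; deleting loop steps only shortens the walk. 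Hence homomorphisms never increase distance: $d_H(h_1,h_2)\le d_{H\otimes K}((h_1,k_1),(h_2,k_2))$ whenever the right side is finite.

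Next, assume $D(H)\ge D(K)$ without loss of generality. Choose $h_1,h_2$ with $d_H(h_1,h_2)=D(H)$ and any vertex $k\in V(K)$, and consider the pair $(h_1,k),(h_2,k)$. Two cases arise.

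\begin{itemize}
\item If these two vertices lie in different components of $H\otimes K$ (for instance when both factors are bipartite), then the product is disconnected and its diameter is infinite under the usual convention. The inequality then holds trivially.
\item Otherwise, their distance in the product is at least $d_H(h_1,h_2)=D(H)$ by the projection argument. Hence $D(H\otimes K)\ge D(H)=\max(D(H),D(K))$.
\end{itemize}

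The only point needing care is the convention for disconnected products. The statement in \cite{hammack2011handbook} reads the diameter as $\infty$ in that case, so the bound remains valid. If instead the paper intends $D$ to refer to a connected component such as $\mathbb{H}\odot\mathbb{K}$, the same projection argument applies verbatim, provided both endpoints are chosen within that component. This requires the following check: given $h_1,h_2$ at distance $D(H)$, some $k_1,k_2$ exist with $(h_1,k_1)$ and $(h_2,k_2)$ in the same component. Since $\pi_H$ restricted to a component is surjective onto $V(H)$ when $K$ is connected with at least one edge, such $k_1,k_2$ exist, and the distance between these two vertices is still bounded below by $D(H)$. I expect this component bookkeeping to be the only mild obstacle; the rest is routine.
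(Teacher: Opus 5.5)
Your proof is correct and self-contained. The paper gives no proof of this statement: it cites the Handbook and uses the result once, in the proof of Theorem~\ref{th:ulb}.

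The usual textbook route goes through the exact description of distance in a direct product. There, $d_{H\otimes K}((h_1,k_1),(h_2,k_2))$ is the least $n$ for which $H$ has an $h_1h_2$-walk of length $n$ and $K$ has a $k_1k_2$-walk of length $n$, and it is $\infty$ if no such $n$ exists. Your projection argument is exactly the easy half of that formula: walks in the product project to equal-length walks in both factors. That half is all this inequality needs.

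The other half says that a pair of equal-length factor walks is a product walk. Combined with padding shorter walks, it gives upper bounds such as the Hell--Pu\v{s} equality in Proposition~\ref{prop:redkronecker_stats}(2). So the full formula yields more, at the cost of more work, while your route is the economical one for a lower bound.

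Two minor comments.

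First, your example ``when both factors are bipartite'' does not by itself put $(h_1,k)$ and $(h_2,k)$ in different components. With bipartite factors this happens exactly when $D(H)$ is odd; otherwise your second case applies. Since your case split is exhaustive, nothing breaks.

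Second, in the paper's only application, $H=P_H(\mathbb{H})$ and $K=P_K(\mathbb{K})$. By Theorem~\ref{th:main_isomorphism}, their product is isomorphic to $P(\mathbb{H}\odot\mathbb{K})$. That graph is a homomorphic image of the connected graph $\mathbb{H}\odot\mathbb{K}$, so it is connected. Hence neither the $\infty$ convention nor your component bookkeeping is actually needed there. For the component version in general, your surjectivity caveat is the right one: the other factor must have at least one edge, possibly a loop.
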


Propositions \ref{prop:kronecker_bipartite} and \ref{prop:vertices_kronecker} come from papers that define Kronecker only for loopless graphs. However, these propositions assume bipartite graphs, which must be loopless, so their Kronecker product is the same under either definition. Thus, the propositions apply in either case, and in particular, under the looped-graph Kronecker definition used here.
\begin{proposition}\label{prop:kronecker_bipartite}\cite[Corollary of Theorem 1]{kronecker_1962}
Let $\mathbb{H}$ and 
$\mathbb{K}$ be two bipartite graphs. Then their Kronecker product $\mathbb{G} = \mathbb{H} \otimes \mathbb{K}$ is composed of two disconnected bipartite graphs.
\end{proposition}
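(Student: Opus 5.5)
The statement to prove is Proposition~\ref{prop:kronecker_bipartite}. Since $\mathbb{H}$ and $\mathbb{K}$ are bipartite, they are loopless, so $\mathbb{H}\otimes\mathbb{K}$ is the ordinary loopless Kronecker product. The plan is to write down an explicit partition of the vertex set into two pieces with no edges between them. Then we check that each piece is bipartite and, using connectivity of the factors, that each piece is connected.

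Write $\mathbb{H}$ with bipartition $(A_H,B_H)$ and $\mathbb{K}$ with bipartition $(A_K,B_K)$. Define the parity functions $\epsilon_H(v)=0$ if $v\in A_H$ and $1$ if $v\in B_H$, and define $\epsilon_K$ similarly. Set
$$X=\{(h,k):\epsilon_H(h)=\epsilon_K(k)\}=(A_H\times A_K)\cup(B_H\times B_K)$$
and
$$Y=\{(h,k):\epsilon_H(h)\neq\epsilon_K(k)\}=(A_H\times B_K)\cup(B_H\times A_K).$$
Along any edge $((h_1,k_1),(h_2,k_2))$ both parities flip, since $h_1h_2\in\mathbb{E}_H$ and $k_1k_2\in\mathbb{E}_K$ are edges of bipartite graphs. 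Hence $\epsilon_H-\epsilon_K \pmod 2$ is preserved along edges, and no edge joins $X$ to $Y$. Inside $X$, every edge goes between $A_H\times A_K$ and $B_H\times B_K$, so the induced subgraph on $X$ is bipartite with these two parts. Likewise $Y$ is bipartite with parts $A_H\times B_K$ and $B_H\times A_K$.

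It remains to show that each of $X$ and $Y$ induces a connected subgraph. I expect this to be the main step, and it is where connectivity of the factors is used. Fix $(h,k)$ and $(h',k')$ in $X$. Choose walks $h=h_0,\dots,h_a=h'$ in $\mathbb{H}$ and $k=k_0,\dots,k_b=k'$ in $\mathbb{K}$. Because $\epsilon_H(h)=\epsilon_K(k)$ and $\epsilon_H(h')=\epsilon_K(k')$, the lengths satisfy $a\equiv b \pmod 2$. Assume $a\le b$ and pad the shorter walk by repeating a back-and-forth step $h'\to u\to h'$, where $u$ is any neighbor of $h'$. Such a neighbor exists because a connected bipartite graph with at least one edge has none isolated. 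This gives a walk of length exactly $b$, and pairing the two walks coordinatewise yields a walk from $(h,k)$ to $(h',k')$ in $\mathbb{H}\otimes\mathbb{K}$. The same argument works for $Y$.

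If the factors are nontrivial, $X$ and $Y$ are both nonempty, so $\mathbb{H}\otimes\mathbb{K}$ consists of exactly two connected bipartite components. The only caveat is the degenerate case in which a factor has a single vertex and no edges. The intended setting, bipartite graphs with both parts nonempty and at least one edge, excludes this.
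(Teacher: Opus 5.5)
Your proof is correct, but it takes a different route from the paper, which gives no argument for this proposition. The paper imports it from Weichsel's corollary: for connected factors, the product is connected if and only if some factor contains an odd cycle, so two connected bipartite factors give exactly two components. It separately cites Jha--Klav\v{z}ar--Zmazek for the vertex sets of those components (Proposition~\ref{prop:vertices_kronecker}).

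Your argument derives both facts at once from first principles. The invariant $\epsilon_H-\epsilon_K \pmod 2$ gives exactly the split $(A_H\times A_K)\cup(B_H\times B_K)$ versus $(A_H\times B_K)\cup(B_H\times A_K)$, together with the bipartition of each piece. Padding walks to equal length then gives connectivity. This is the same equal-length-walk criterion that underlies Weichsel's theorem (in matrix language, $(A\otimes B)^n=A^n\otimes B^n$ for adjacency matrices $A,B$).

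What your route buys is self-containedness and an exact accounting of the hypotheses. The printed statement omits connectivity of the factors, which the paper supplies only through its standing convention in Section~\ref{sec:notation}; without it there are more components. Your degenerate case is also a genuine exception, so each factor needs at least one edge: if $\mathbb{K}$ is a single vertex, $\mathbb{H}\otimes\mathbb{K}$ is edgeless with $|V(\mathbb{H})|$ components. What the citation buys is brevity, plus the non-bipartite half of Weichsel's theorem, which the paper does not need here.
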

\begin{proposition}\label{prop:vertices_kronecker}~\cite[Lemma 2.2]{iso_comp_kronecker_bipartite_1997}
    Let $\mathbb{H}(U_H=(A_H,B_H),\mathbb{E}_H)$ and 
$\mathbb{K}(U_K=(A_K,B_K),\mathbb{E}_K)$ be bipartite graphs. Then the sets of vertices of the two components of $\mathbb{H} \otimes \mathbb{K}$ are $(A_H \times A_K) \cup (B_H \times B_K)$ and $(A_H \times B_K) \cup (B_H \times A_K)$.
\end{proposition}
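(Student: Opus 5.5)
The plan is to prove the statement directly from Definition~\ref{def:kronecker}, using only the bipartiteness and connectedness of $\mathbb{H}$ and $\mathbb{K}$. I assume throughout that each factor has at least one edge, so all four parts $A_H,B_H,A_K,B_K$ are nonempty. Set
$$X=(A_H\times A_K)\cup(B_H\times B_K),\qquad Y=(A_H\times B_K)\cup(B_H\times A_K).$$
These two sets partition the vertex set of $\mathbb{H}\otimes\mathbb{K}$. I will show that no edge joins $X$ to $Y$, and that each of $X$ and $Y$ induces a connected subgraph. Together these identify the two components; this also recovers Proposition~\ref{prop:kronecker_bipartite}.

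\emph{No edges between $X$ and $Y$.} For a vertex $v$ of $\mathbb{H}$, let $\epsilon_H(v)\in\{0,1\}$ be $0$ if $v\in A_H$ and $1$ if $v\in B_H$, and define $\epsilon_K$ in the same way. Then $X$ is exactly the set of pairs $(h,k)$ with $\epsilon_H(h)+\epsilon_K(k)\equiv 0 \pmod 2$, and $Y$ is the set where this sum is odd. An edge $((h_1,k_1),(h_2,k_2))$ of the product requires $(h_1,h_2)\in\mathbb{E}_H$ and $(k_1,k_2)\in\mathbb{E}_K$. Since both factors are bipartite, each coordinate switches sides along such an edge. Hence both $\epsilon$ values flip, and the parity of their sum is unchanged. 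So every edge stays inside $X$ or inside $Y$.

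\emph{Connectivity of $X$ (and likewise $Y$).} Take $(h,k),(h',k')\in X$. Because $\mathbb{H}$ is connected, there is a walk from $h$ to $h'$ of some length $\ell_H$. Because $\mathbb{H}$ is bipartite, $\ell_H\equiv \epsilon_H(h)+\epsilon_H(h') \pmod 2$. Similarly, there is a walk from $k$ to $k'$ in $\mathbb{K}$ of length $\ell_K\equiv\epsilon_K(k)+\epsilon_K(k')\pmod 2$. Both endpoints lie in $X$, so $\epsilon_H(h)\equiv\epsilon_K(k)$ and $\epsilon_H(h')\equiv\epsilon_K(k')$, which gives $\ell_H\equiv\ell_K\pmod 2$.

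Next I pad the shorter walk to the length of the longer one. To do this, I traverse an edge at its terminal vertex back and forth; such an edge exists by connectedness and nontriviality, and each round trip adds $2$ to the length. The two walks now have a common length $\ell$, say $h=h_0,\dots,h_\ell=h'$ and $k=k_0,\dots,k_\ell=k'$. Zipping them coordinatewise gives the sequence $(h_i,k_i)$, which is a walk in $\mathbb{H}\otimes\mathbb{K}$ by Definition~\ref{def:kronecker}. The identical argument works for $Y$, since there $\epsilon_H(h)\not\equiv\epsilon_K(k)$ and $\epsilon_H(h')\not\equiv\epsilon_K(k')$, which again forces $\ell_H\equiv\ell_K\pmod 2$.

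The only real obstacle is this parity-matching step. It is exactly where bipartiteness enters: it makes walk-length parities determined by the endpoints' sides. It is also where connectedness and the existence of an edge in each factor are needed, to make the padding possible. If a factor were a single vertex with no edges, the product would be edgeless, so this hypothesis should be stated explicitly. Everything else is routine bookkeeping.
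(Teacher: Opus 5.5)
Your proof is correct. The paper gives no argument of its own here; it imports the statement as Lemma~2.2 of \cite{iso_comp_kronecker_bipartite_1997}, placed next to Weichsel's result (Proposition~\ref{prop:kronecker_bipartite}) that the product has exactly two components.

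Within the paper's framework, your first step alone would finish the proof once that proposition is available:
\begin{itemize}
\item the parity invariant shows that $X$ and $Y$ are each unions of components;
\item both sets are nonempty;
\item there are only two components, so each set is exactly one component.
\end{itemize}
Your walk-padding argument replaces the appeal to Weichsel with a direct connectivity proof. This makes your route self-contained: it re-derives the two-component count rather than depending on it. The price is the extra parity-matching and padding step, and you handle it correctly, since the edge needed at each terminal vertex exists because each factor is connected and has at least one edge.

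Your two explicit hypotheses, connectedness and at least one edge per factor, are genuinely necessary. The printed statement leaves them to the paper's standing convention that Kronecker factors are connected, together with the implicit nonemptiness of both parts.
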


\subsection{Polarities} \label{sec:polarities}
\begin{definition}[Polarity] \label{defn:polarity_alt3} 
A graph automorphism $\rho$ on a bipartite graph $\mathbb{G}(U=(A,B),\mathbb{E})$ is a polarity if $\rho$ is an involution such that $\rho(A)=B$ and  $\rho(B)=A$. 
\end{definition}

The concept of polarity originates in the context of incidence graphs of finite geometries (see \cite{delorme_opp_85, GenPolys_VanMaldgehem} and others) and is widely discussed in that field. This was extended to the class of bipartite graphs by  Lazebnik and Woldar in \cite{lw_polarity_graphs}. Hammack \cite{bipartition_reversing_involutions_08} and Abay-Asmerom et al.\ \cite{bipartition_reversing_involutions_10} use the terminology \emph{bipartition-reversing involution} for the same concept, or just \emph{reversing involution} when the bipartite context is clear.
A polarity always reverses the partition of a bipartite graph, so is a special case of an involutive automorphism.
\begin{definition} 
[Polarity Quotient Graph]\label{def:pol_quotient_graph}
    Let $\mathbb{G}$ be a bipartite graph $\mathbb{G}(U=(A,B),\mathbb{E})$ and let $\rho$ be a polarity. The polarity quotient graph is the quotient graph $P(\mathbb{G})$, where vertices are the orbits of the quotient $[v]=\{v, \rho(v)\}$ and $[v]$ and $[w]$ are adjacent whenever there exist $v'\in [v]$ and $w'\in [w]$ such that $(v',w') \in E.$ Multi-edges are merged into a single edge but self-loops are retained. 
\end{definition}

The notion of a polarity graph with respect to generalized polygons is well known and is discussed in \cite{delorme_opp_85} and elsewhere. However, in that context, self-loops are usually deleted. The \emph{quotient graph}, with self-loops retained, is defined in \cite{congruences_loop_1, congruences_loop_2}, and Definition~\ref{def:pol_quotient_graph} gives the quotient graph as defined in those papers, with respect to the polarity.
\begin{definition}[Absolute Point] \label{def:abs_point}
    An absolute point with respect to a polarity $\rho$ of a bipartite graph $\mathbb{G}$ is a vertex $v \in V(\mathbb{G})$ such that $(v, \rho(v)) \in E(\mathbb{G}).$ 
\end{definition}
Definition~\ref{def:abs_point} extends the terminology of finite geometry found in \cite{GenPolys_VanMaldgehem} and elsewhere to the general case of bipartite graphs. This definition, for bipartite graphs, is also seen in \cite{lw_polarity_graphs}.
\begin{definition}[Absolute Vertex of a Polarity Quotient]\label{def:abs_vertices}
    An absolute vertex of a polarity quotient of a bipartite graph $\mathbb{G}$ is a vertex of the polarity quotient graph $P(\mathbb{G})$ resulting from the contraction of $(v, \rho(v)) \in \mathbb{G}$ corresponding to the orbit of $\{v, \rho(v)\}$ in the quotient, where $v$ is an absolute point. Consequently, the polarity induces a self-loop at the corresponding vertex in the quotient graph $P(\mathbb{G})$.
\end{definition}

\begin{proposition} \label{prop:polarity_props} \cite[Section 2]{delorme_opp_85}
    Let $\mathbb{G}$ be a bipartite graph $(U=(A,B),\mathbb{E})$ with a polarity map $\rho:A\rightarrow B$, and let $\overline{P(\mathbb{G})}$ be the polarity quotient graph of $\mathbb{G}$ with any resulting self-loop edges dropped.
Then
\begin{enumerate}
    \item $\Delta(\overline{P(\mathbb{G})}) \le \Delta(\mathbb{G})$,   
    \item $D(\overline{P(\mathbb{G})}) \le D(\mathbb{G})-1$,  \text{and}
    \item $|V(\overline{P(\mathbb{G}))}| = |A| = |B|.$
\end{enumerate}
\end{proposition}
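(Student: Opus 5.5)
The plan is to prove all three parts directly from the quotient map $\pi:V(\mathbb{G})\to V(P(\mathbb{G}))$, $\pi(v)=[v]=\{v,\rho(v)\}$. First I will note that $\pi$ is a graph homomorphism: an edge $(v,w)$ of $\mathbb{G}$ maps to the edge $([v],[w])$, possibly a self-loop. I will also use two consequences of $\rho$ being a partition-reversing involutive automorphism. The first is that $(v,w)\in\mathbb{E}$ iff $(\rho(v),\rho(w))\in\mathbb{E}$. The second is that every orbit $[v]$ has exactly two elements, one in $A$ and one in $B$.

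\textbf{Part (3).} Since each orbit contains exactly one vertex of $A$, the map $a\mapsto[a]$ is a bijection from $A$ onto $V(P(\mathbb{G}))$. Removing loops does not change the vertex set. Also $\rho$ restricts to a bijection $A\to B$, which gives $|A|=|B|$.

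\textbf{Part (1).} Fix an orbit and write it as $[v]$ with representative $v$. By definition, $[w]$ is adjacent to $[v]$ iff some element of $[w]$ is adjacent to $v$ or to $\rho(v)$. Suppose $w'\in[w]$ is adjacent to $\rho(v)$. Then $\rho(w')\in[w]$ is adjacent to $\rho(\rho(v))=v$. Hence $[w]$ is adjacent to $[v]$ iff $[w]$ contains a neighbour of $v$. So $\pi$ restricted to $N(v)$ surjects onto the neighbourhood of $[v]$ in the quotient, and after deleting a possible self-loop we get $\deg_{\overline{P(\mathbb{G})}}([v])\le \deg_{\mathbb{G}}(v)\le\Delta(\mathbb{G})$.

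\textbf{Part (2).} This is the only step needing an idea, namely a parity argument. Let $[u]\neq[w]$ and write $D=D(\mathbb{G})$, which is finite since $\mathbb{G}$ is connected. Exactly one of $w,\rho(w)$ lies in the same part as $u$. Because $\mathbb{G}$ is bipartite, $d(u,w)$ and $d(u,\rho(w))$ have different parities, so they cannot both equal $D$. Hence $\min\{d(u,w),d(u,\rho(w))\}\le D-1$.

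Now take a shortest path in $\mathbb{G}$ from $u$ to the closer of $w,\rho(w)$. Its image under $\pi$ is a walk from $[u]$ to $[w]$ of the same length in $P(\mathbb{G})$. Some consecutive vertices may map to the same orbit, which is exactly where self-loops arise; I will simply delete those repetitions. This yields a walk of length at most $D-1$ in $\overline{P(\mathbb{G})}$, so $D(\overline{P(\mathbb{G})})\le D-1$.

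The main subtlety I expect is just this bookkeeping: dropping loops must neither disconnect anything nor lengthen walks. That holds because loops only ever produce repeated consecutive vertices, which can be removed without breaking the walk.
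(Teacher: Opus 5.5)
Your proof is correct. The paper gives no proof of its own for this proposition; it simply cites Delorme \cite[Section 2]{delorme_opp_85}. Your write-up therefore serves as a self-contained verification rather than an alternative route.

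Your part (2) is the standard parity argument: the lifts $w$ and $\rho(w)$ lie at distances of opposite parity from $u$, so not both can equal $D(\mathbb{G})$. Your part (1) is the surjectivity half of the edge correspondence the paper uses later, in the proof of Lemma~\ref{lemma:polarity_props_diam_equality}. There the authors add the injectivity half (at most one of $w,\rho(w)$ is adjacent to $v$) to get the exact count $\deg_{P(\mathbb{G})}([v])=\deg_{\mathbb{G}}(v)$ with loops retained.

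Your appeal to connectedness in part (2) matches the paper's standing convention. Nothing is lost without it, since the bound is vacuous when $D(\mathbb{G})=\infty$.
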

\begin{lemma}\label{lemma:polarity_props_diam_equality} 
    Let $\mathbb{G}$ be a bipartite graph with polarity $\rho$, and let $\overline{P(\mathbb{G})}$ be the polarity quotient graph of $\mathbb{G}$ with self-loop edges dropped. Then $\Delta(\overline{P(\mathbb{G})}) = \Delta(\mathbb{G})$ if and only if there exist points in $\mathbb{G}$ of maximal degree that are not absolute with respect to $\rho.$
\end{lemma}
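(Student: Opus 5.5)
The plan is to compare, vertex by vertex, the degree of a class $[v]=\{v,\rho(v)\}$ in $\overline{P(\mathbb{G})}$ with the degree of $v$ in $\mathbb{G}$. Since $\rho$ is an automorphism, $\deg(v)=\deg(\rho(v))$, so the degree of $\mathbb{G}$ is realized on both sides of the bipartition symmetrically. Take $v\in A$, so $\rho(v)\in B$. The neighbours of $v$ lie in $B$, and distinct $w_1,w_2\in N(v)$ lie in distinct classes, because each class contains exactly one vertex of $B$. The same holds for $N(\rho(v))\subseteq A$.

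The key step is to establish the identity
\[
N_{\overline{P}}([v]) = \{[w] : w\in N(v)\}\setminus\{[v]\}.
\]
Any edge from a member of $[v]$ goes to $w\in N(v)$ or to $w'\in N(\rho(v))$. Since $\rho$ maps $N(v)$ bijectively onto $N(\rho(v))$, we have $[w']=[\rho(w')]$ with $\rho(w')\in N(v)$. So the neighbouring classes of $[v]$ are exactly the classes $[w]$ for $w\in N(v)$, and these are pairwise distinct by the observation above. Deleting the loop removes $[v]$ from this set precisely when $[v]$ is among these classes. That happens iff $\rho(v)\in N(v)$ (the only member of $[v]$ lying in $B$ is $\rho(v)$), i.e.\ iff $v$ is absolute. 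Hence $\deg_{\overline{P}}([v])=\deg(v)$ if $v$ is not absolute, and $\deg(v)-1$ if $v$ is absolute. The same count applies to $\rho(v)$, since $v$ is absolute iff $\rho(v)$ is: apply $\rho$ to the edge $(v,\rho(v))$ and use that $\rho$ is an involution.

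Given this formula, $\Delta(\overline{P(\mathbb{G})})=\max_{[v]}\deg_{\overline{P}}([v])$, which recovers part (1) of Proposition~\ref{prop:polarity_props}. For the forward direction, suppose equality holds. Pick $[v]$ with $\deg_{\overline{P}}([v])=\Delta(\mathbb{G})$. If $v$ were absolute, its degree in $\overline{P}$ would be $\deg(v)-1\le\Delta(\mathbb{G})-1$, a contradiction. So $v$ is non-absolute, and $\deg(v)=\Delta(\mathbb{G})$. For the converse, a non-absolute vertex $v$ of maximal degree gives $\deg_{\overline{P}}([v])=\Delta(\mathbb{G})$, so the maximum is attained.

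The main obstacle is the careful bookkeeping that the neighbour classes coming from $v$ and from $\rho(v)$ coincide rather than add. Everything else is immediate once the displayed identity is in hand.
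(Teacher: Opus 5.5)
Your proof is correct and follows the same route as the paper. Both arguments show that $[v]$ has exactly $\deg(v)$ distinct neighbour classes in $P(\mathbb{G})$, distinct because no two neighbours of $v$ share a class. In $\overline{P(\mathbb{G})}$ the class $[v]$ loses exactly one of these, the self-loop, precisely when $v$ is absolute. Your version is slightly more complete than the paper's: you explicitly check that the neighbours of $\rho(v)$ contribute no new classes, and you spell out both directions of the equivalence, whereas the paper leaves both points implicit.
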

\begin{proof}
Let $v$ be a vertex of $\mathbb{G}$. Since $\rho$ is an automorphism, each edge $(v,w)$ induces an edge $([v],[w])$ in $P(\mathbb{G})$.  
These edges are distinct: since $\mathbb{G}$ is bipartite and $\rho$ is a polarity, no more than one of $w$ and $\rho(w)$ can be a neighbor of $v$. Thus, the number of edges incident to $[v]$ is equal to the degree of $v$. 
If $v$ is absolute with respect to $\rho$, then $[v]$ has a self-loop deleted in $\overline{P(\mathbb{G})},$ i.e., one fewer edge incident to $[v]$, so in this case, $[v]$ will have degree $\deg(v)-1$ in $\overline{P(\mathbb{G})}$. So vertices $[v]$ from absolute points $v$ have degree $1$ smaller than $v$, completing the proof. 
\end{proof}

\subsection{Bipartition-Reversing Automorphisms and Kronecker Products}\label{sec:pautomorphisms_kronecker}
An automorphism on a bipartite graph must either reverse or preserve its partitions \cite{bipartition_reversing_involutions_10}.
In \cite{iso_comp_kronecker_bipartite_1997}, Jha et al. gave several results on Kronecker products of bipartite graphs that have a reversing automorphism, i.e., that exchanges the partitions. We will use their results to show theorems on Kronecker products of bipartite graphs that admit polarity, enabling the main theoretical results of this paper. 

We recall from Section~\ref{sec:self_loop_convention} that any self-loops that emerge during construction are retained until we conduct degree/diameter comparisons to other graphs at the end of the construction, at which point we drop the self-loops.

\begin{definition}[Bipartition-Reversing Automorphism]~\cite[Definition 3.1]{iso_comp_kronecker_bipartite_1997} \cite[Page 2043]{bipartition_reversing_involutions_10}\label{prop:pi}
    A bipartite graph $\mathbb{G} = (U=(A,B), \mathbb{E})$ has a bipartition-reversing automorphism when $\mathbb{G}$ admits an automorphism $\psi: A \rightarrow B$. 
    \emph{(}In \cite{iso_comp_kronecker_bipartite_1997}, a graph having such an automorphism is said to have Property $\pi$.\emph{)}
\end{definition}

A bipartition-reversing automorphism is called a  \emph{duality} when discussing incidence graphs of finite geometries \cite{delorme_french_polys_replication_85, delorme_opp_85, GenPolys_VanMaldgehem}. The theorems of Section~\ref{sec:main_bipartite} apply to general bipartite graphs, but our main application is from the field of finite geometry, so we mention duality here for context. 

The forward implication of Theorem~\ref{th:iso_components_kronecker} was shown by Jha et al.\ in 1997 \cite{iso_comp_kronecker_bipartite_1997}. The converse remained a conjecture until Hammack proved it in 2009 \cite{iso_comp_kronecker_bipartite_2006}. 
\begin{theorem}\label{th:iso_components_kronecker} ~\cite[Theorem 3.2]{iso_comp_kronecker_bipartite_1997} \cite[Theorem 1]{ iso_comp_kronecker_bipartite_2006} 
    Let $\mathbb{H}$ and $\mathbb{K}$ be bipartite graphs. The two components of $\mathbb{H} \otimes \mathbb{K}$ are isomorphic if and only if at least one of these has a bipartition-reversing automorphism. 
\end{theorem}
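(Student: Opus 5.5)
The statement immediately above is Theorem~\ref{th:iso_components_kronecker}, the Jha--Hammack characterization. For the forward direction we exhibit an explicit isomorphism. The reverse direction is Hammack's deep result, and I would follow his strategy in outline only.

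\textbf{Forward direction.} By Proposition~\ref{prop:vertices_kronecker}, the two components are
$$C_1=(A_H\times A_K)\cup(B_H\times B_K), \qquad C_2=(A_H\times B_K)\cup(B_H\times A_K).$$
Suppose, say, $\mathbb{K}$ has a bipartition-reversing automorphism $\psi$. Define $\Phi=\mathrm{id}\times\psi$ on $V(\mathbb{H})\times V(\mathbb{K})$. Since $\psi$ swaps $A_K$ and $B_K$, the map $\Phi$ sends $C_1$ bijectively onto $C_2$.

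Adjacency is preserved in both directions. Indeed, $(h_1,k_1)\sim(h_2,k_2)$ holds if and only if $h_1\sim h_2$ and $k_1\sim k_2$, and $k_1\sim k_2$ holds if and only if $\psi(k_1)\sim\psi(k_2)$. Hence $\Phi|_{C_1}$ is a graph isomorphism $C_1\to C_2$. The case where $\mathbb{H}$ carries the automorphism is symmetric. This step is routine.

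\textbf{Converse.} Assume there is an isomorphism $f\colon C_1\to C_2$. We want a partition-reversing automorphism of $\mathbb{H}$ or of $\mathbb{K}$.

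The plan is to pass to the full product $\mathbb{H}\otimes\mathbb{K}$ and define the automorphism $F=f\cup f^{-1}$, which swaps the two components. We then exploit unique prime factorization for the direct product of connected nonbipartite graphs. Since the factors here are bipartite, this factorization is not directly available. Following Hammack, I would first apply it after taking a product with a nonbipartite graph such as $K_2$ with a loop, or $K_3$, to reach a setting where cancellation and factorization hold.

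In that setting, automorphisms of a direct product are generated by automorphisms of the factors together with permutations of isomorphic prime factors. Concretely, I would show that $\mathbb{H}\otimes\mathbb{K}\cong\mathbb{H}\otimes\mathbb{K}'$, where $\mathbb{K}'$ is $\mathbb{K}$ with its bipartition relabeled. A cancellation argument then yields either $\mathbb{K}\cong\mathbb{K}'$ by an isomorphism that swaps the sides, or the analogous statement for $\mathbb{H}$.

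\textbf{Main obstacle.} The hardest step is this last one. We must extract, from the abstract isomorphism $f$, an isomorphism that reverses the partition of one specific factor. This is exactly why the converse resisted proof from 1997 to 2009.

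I would proceed through Hammack's cancellation lemma for bipartite factors: $\mathbb{H}\otimes\mathbb{K}$ determines the bipartite pair up to a simultaneous reversal. If this cannot be reproduced in full, I would simply cite \cite[Theorem 1]{iso_comp_kronecker_bipartite_2006} for the converse. This is acceptable because the paper only uses the forward direction, for polarities.
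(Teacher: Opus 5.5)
Your forward direction is correct and is the standard Jha--Klav\v{z}ar--Zmazek argument. The paper does not prove this theorem: it cites \cite{iso_comp_kronecker_bipartite_1997} for the forward direction and \cite{iso_comp_kronecker_bipartite_2006} for the converse, so your fallback of citing Hammack matches the paper.

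The converse route you sketch, however, fails at its first step. A direct product with a bipartite factor is always bipartite, since the projection $\mathbb{H}\otimes N\to\mathbb{H}$ composed with $\mathbb{H}\to K_2$ is a homomorphism. So multiplying by $K_3$ or a looped graph never reaches the connected nonbipartite setting where unique prime factorization holds. Your description of the automorphisms of a product (generated by factor automorphisms and swaps of isomorphic primes) would also need $R$-thinness. More fundamentally, $\mathbb{H}\otimes\mathbb{K}\cong\mathbb{H}\otimes\mathbb{K}'$ is vacuous for plain graphs, because $\mathbb{K}'=\mathbb{K}$ as a graph. Cancellation of a bipartite factor also genuinely fails for plain graphs: $K_3\otimes K_2\cong C_6$, so $(K_3\sqcup K_3)\otimes K_2\cong C_6\otimes K_2$. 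The information about which side is which has to be built into the objects themselves.

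What is missing is a setting that remembers the ordered bipartition and in which cancellation is automatic.
\begin{itemize}
\item Orient every edge of $\mathbb{H}$ from $A_H$ to $B_H$ to get $\vec{\mathbb{H}}$, let $\overleftarrow{\mathbb{H}}$ be its reverse, and do the same for $\mathbb{K}$. Use the digraph direct product: $(h,k)\to(h',k')$ if and only if $h\to h'$ and $k\to k'$.
\item Then $\vec{\mathbb{H}}\otimes\vec{\mathbb{K}}$ is $C_1$ oriented from $A_H\times A_K$ to $B_H\times B_K$, plus $|C_2|$ isolated vertices. The products $\vec{\mathbb{H}}\otimes\overleftarrow{\mathbb{K}}$ and $\overleftarrow{\mathbb{H}}\otimes\vec{\mathbb{K}}$ are $C_2$ with its two orientations, each plus $|C_1|$ isolated vertices.
\item Since $C_1$ and $C_2$ are connected bipartite, an isomorphism $f\colon C_1\to C_2$ maps parts to parts. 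Together with $|C_1|=|C_2|$, this gives either $\vec{\mathbb{H}}\otimes\vec{\mathbb{K}}\cong\vec{\mathbb{H}}\otimes\overleftarrow{\mathbb{K}}$ or $\vec{\mathbb{H}}\otimes\vec{\mathbb{K}}\cong\overleftarrow{\mathbb{H}}\otimes\vec{\mathbb{K}}$.
\item In the first case, multiplicativity of homomorphism counts gives $\hom(X,\vec{\mathbb{H}})\hom(X,\vec{\mathbb{K}})=\hom(X,\vec{\mathbb{H}})\hom(X,\overleftarrow{\mathbb{K}})$ for every finite digraph $X$.
\item If $\hom(X,\vec{\mathbb{H}})=0$, the other two counts also vanish. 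This is because $\vec{\mathbb{K}}$ and $\overleftarrow{\mathbb{K}}$ both map to a single arc, and that arc maps into $\vec{\mathbb{H}}$.
\item Hence $\hom(X,\vec{\mathbb{K}})=\hom(X,\overleftarrow{\mathbb{K}})$ for all $X$, and Lov\'asz's theorem gives $\vec{\mathbb{K}}\cong\overleftarrow{\mathbb{K}}$. Such an isomorphism sends the sources $A_K$ to the sources $B_K$, so it is exactly an automorphism of $\mathbb{K}$ exchanging $A_K$ and $B_K$.
\item The second case symmetrically gives such an automorphism of $\mathbb{H}$.
\end{itemize}
This replaces your factorization machinery with a self-contained half-page argument that needs no primality or thinness hypotheses.
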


\begin{definition} [Reduced-Kronecker product of graphs] \cite{iso_comp_kronecker_bipartite_1997,iso_comp_kronecker_bipartite_2006} \label{def:reduced_kronecker}
    Let  $\mathbb{H}$ and $\mathbb{K}$ be bipartite graphs, at least one of which has a bipartition-reversing automorphism. Their Reduced-Kronecker product $\mathbb{H} \odot \mathbb{K}$ is one of the two components of their Kronecker product $\mathbb{H} \otimes \mathbb{K}$, which by Theorem~\ref{th:iso_components_kronecker} are isomorphic.
\end{definition}

Proposition~\ref{prop:redkronecker_stats}$(2)$ is due to Pu\v{s} \cite[Corollary 1]{diam_kronecker}, who attributes it to Hell \cite{diam_kronecker_orig}. Propositions~\ref{prop:redkronecker_stats}$(1)$ and $(3)$ follow directly from Theorem~\ref{th:iso_components_kronecker} and Proposition~\ref{prop:vertices_kronecker} of \cite{iso_comp_kronecker_bipartite_1997}, and  the definition of Kronecker product.
\begin{proposition} \label{prop:redkronecker_stats}
Let $\mathbb{H}(U_H=(A_H,B_H),\mathbb{E}_H)$ and 
$\mathbb{K}(U_K=(A_K,B_K),\mathbb{E}_K)$ be connected bipartite graphs, at least one of which has a bipartition-reversing automorphism. Then the degree $\Delta$, diameter $D$  and number of vertices $|V(\mathbb{G})|$ of the Reduced-Kronecker product $\mathbb{G}=\mathbb{H} \odot \mathbb{K}$ are
\begin{enumerate}
\item $\Delta(\mathbb{G})=\Delta(\mathbb{H})\Delta(\mathbb{K}),$
\item  $D(\mathbb{G})=\max(D(\mathbb{H}), D(\mathbb{K})),$ and
\item $|V(\mathbb{G})|=|A_H||A_K|+|B_H||B_K|.$ \end{enumerate}
\end{proposition}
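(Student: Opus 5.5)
The plan is to work inside the full product $\mathbb{H}\otimes\mathbb{K}$ and use Proposition~\ref{prop:vertices_kronecker} together with Theorem~\ref{th:iso_components_kronecker}. By the theorem, the two components are isomorphic. So any invariant that is attained somewhere in $\mathbb{H}\otimes\mathbb{K}$, and is local to a component, is attained in $\mathbb{H}\odot\mathbb{K}$. This lets us ignore which component is chosen.

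\textbf{Item (3).} By Proposition~\ref{prop:vertices_kronecker}, one component has vertex set $(A_H\times A_K)\cup(B_H\times B_K)$. This union is disjoint because $A_H\cap B_H=\emptyset$. Counting gives $|A_H||A_K|+|B_H||B_K|$. Since the two components are isomorphic, the count is the same for either choice.

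\textbf{Item (1).} By Definition~\ref{def:kronecker}, the neighbours of $(h,k)$ are exactly $N(h)\times N(k)$. Bipartite graphs have no loops, so $\deg(h,k)=\deg(h)\deg(k)\le\Delta(\mathbb{H})\Delta(\mathbb{K})$. For equality, choose $h^*$ and $k^*$ of maximum degree. The vertex $(h^*,k^*)$ lies in one of the two components and attains the product. By the isomorphism of the components, the other component also contains such a vertex. Hence $\Delta(\mathbb{H}\odot\mathbb{K})=\Delta(\mathbb{H})\Delta(\mathbb{K})$.

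\textbf{Item (2).} This is the main step; the paper cites Pu\v{s}/Hell for it, but I would argue directly. First, a walk of length $\ell$ from $(h,k)$ to $(h',k')$ in the product is exactly a pair of length-$\ell$ walks, one from $h$ to $h'$ and one from $k$ to $k'$. In a connected bipartite graph with at least one edge, a walk of length $\ell$ from $h$ to $h'$ exists if and only if $\ell\ge d_H(h,h')$ and $\ell\equiv d_H(h,h')\pmod 2$. The ``if'' direction comes from padding a geodesic by going back and forth along an edge. Next, two vertices lie in the same component exactly when $d_H(h,h')\equiv d_K(k,k')\pmod 2$, by the description of the components in Proposition~\ref{prop:vertices_kronecker}. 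Hence within a component,
\[
d\bigl((h,k),(h',k')\bigr)=\max\bigl(d_H(h,h'),\,d_K(k,k')\bigr),
\]
which gives $D\le\max(D(\mathbb{H}),D(\mathbb{K}))$.

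For the lower bound, assume without loss of generality that $D(\mathbb{H})\ge D(\mathbb{K})$, and pick $h,h'$ with $d_H(h,h')=D(\mathbb{H})$. Take $k'=k$ if $D(\mathbb{H})$ is even, or $k'$ a neighbour of $k$ if it is odd. Then the parities of $d_H(h,h')$ and $d_K(k,k')$ agree, so $(h,k)$ and $(h',k')$ lie in one component. Their distance is $\max(D(\mathbb{H}),d_K(k,k'))=D(\mathbb{H})$, since $d_K(k,k')\le 1\le D(\mathbb{H})$. Transporting this pair by the component isomorphism gives $D(\mathbb{H}\odot\mathbb{K})=\max(D(\mathbb{H}),D(\mathbb{K}))$.

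The only delicate point is the parity and padding argument. It needs each factor to have at least one edge, which follows from connectedness once the factor has two or more vertices; I will state that tacit nontriviality assumption explicitly.
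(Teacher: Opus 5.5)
Your proof is correct. For items (1) and (3) you argue exactly as the paper does. You read off the vertex set of a component from Proposition~\ref{prop:vertices_kronecker}, use $N(h,k)=N(h)\times N(k)$, and invoke the isomorphism of the two components from Theorem~\ref{th:iso_components_kronecker} so that the choice of component is irrelevant.

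For item (2) the paper gives no argument and simply cites Pu\v{s} (Corollary 1, attributed to Hell). You instead prove it directly from the walk-parity characterization, which is the bipartite case of the standard even/odd-walk distance formula for direct products. Your route buys three things:
\begin{enumerate}
\item It is self-contained.
\item It yields the sharper pointwise formula $d((h,k),(h',k'))=\max(d_H(h,h'),d_K(k,k'))$ inside a component, not just the diameter.
\item It makes explicit the tacit nontriviality hypothesis, which the paper also needs for Proposition~\ref{prop:kronecker_bipartite}.
\end{enumerate}
The citation buys brevity.

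Your argument also shows that (2), unlike (1) and (3), does not need the bipartition-reversing automorphism. Choose $k\in A_K$ or $k\in B_K$ according to the part containing $h$. This places the diametral pair in whichever component you like, so your final transport step is unnecessary. By contrast, (1) genuinely depends on the hypothesis. For $\mathbb{H}=\mathbb{K}=K_{1,3}$, which has no bipartition-reversing automorphism, the two components of $\mathbb{H}\otimes\mathbb{K}$ are $K_{1,9}$ and $K_{3,3}$. These have maximum degrees $9$ and $3$, yet both have diameter $2$.
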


\begin{proposition}\cite[Proposition 3.3]{iso_comp_kronecker_bipartite_1997}\label{prop:product_pi}
    Let $\mathbb{H}$ and $\mathbb{K}$ be bipartite graphs having bipartition-reversing automorphisms $\rho_H$ and $\rho_K$. Then the isomorphic components of $\mathbb{H} \otimes \mathbb{K}$ also have a bipartition-reversing automorphism $\rho((h, k)) = (\rho_H(h), \rho_K(k))$. 
\end{proposition}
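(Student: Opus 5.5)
The plan is to verify the claim directly from the definitions. I will work with the component $\mathbb{G}_1$ of $\mathbb{H}\otimes\mathbb{K}$ on vertex set $(A_H\times A_K)\cup(B_H\times B_K)$, as given by Proposition~\ref{prop:vertices_kronecker}; the other component is handled identically. Define $\rho((h,k))=(\rho_H(h),\rho_K(k))$. Three properties are needed: $\rho$ is an automorphism of $\mathbb{H}\otimes\mathbb{K}$, it preserves each component, and on each component it exchanges the two bipartition classes. This is slightly more than Proposition~\ref{prop:product_pi} asserts, since we also get that $\rho$ is an involution, which makes it a polarity.

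First, I check that $\rho$ is an automorphism of the full product. It is a bijection because each coordinate map is a bijection. It preserves edges because $(h_1,k_1)\sim(h_2,k_2)$ holds if and only if $h_1\sim h_2$ in $\mathbb{H}$ and $k_1\sim k_2$ in $\mathbb{K}$. Since $\rho_H$ and $\rho_K$ are automorphisms, this is equivalent to $\rho_H(h_1)\sim\rho_H(h_2)$ and $\rho_K(k_1)\sim\rho_K(k_2)$, which in turn is equivalent to $\rho(h_1,k_1)\sim\rho(h_2,k_2)$ in the product.

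Second, I check that $\rho$ is an involution: $\rho^2=(\rho_H^2,\rho_K^2)=(\mathrm{id},\mathrm{id})$.

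Third, I locate the image of each class. Because $\rho_H(A_H)=B_H$ and $\rho_K(A_K)=B_K$, and symmetrically for the $B$ sets, $\rho$ sends $A_H\times A_K$ onto $B_H\times B_K$ and back. Hence $\rho$ maps the vertex set of $\mathbb{G}_1$ onto itself. Similarly, $\rho$ swaps $A_H\times B_K$ and $B_H\times A_K$, so it preserves the vertex set of the other component $\mathbb{G}_2$ as well.

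The step needing the most care is identifying the bipartition of each component. Every edge of the product joins a vertex $(a,a')$ to a vertex $(b,b')$ with $a\in A_H$, $a'\in A_K$, $b\in B_H$, $b'\in B_K$, because each factor is bipartite. So inside $\mathbb{G}_1$ the classes $A_H\times A_K$ and $B_H\times B_K$ are independent sets, and they form a bipartition. This bipartition is unique because $\mathbb{G}_1$ is connected, which holds by Proposition~\ref{prop:kronecker_bipartite} together with connectedness of the factors. The third step shows $\rho$ swaps these two classes. Therefore the restriction of $\rho$ to $\mathbb{G}_1$ is an involutive automorphism reversing the bipartition, which is a polarity in the sense of Definition~\ref{defn:polarity_alt3}. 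The same argument with the classes $A_H\times B_K$ and $B_H\times A_K$ gives the result for $\mathbb{G}_2$, and hence for $\mathbb{H}\odot\mathbb{K}$ whichever component is taken to represent it.
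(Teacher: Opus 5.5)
Your argument for what the proposition actually asserts is correct. It is the natural direct verification: $\rho$ is an automorphism of $\mathbb{H}\otimes\mathbb{K}$. It maps $A_H\times A_K$ onto $B_H\times B_K$ and back, and swaps $A_H\times B_K$ with $B_H\times A_K$. So it restricts to each component and reverses that component's (unique) bipartition. The paper gives no proof here, only the citation to Jha et al., so there is no competing route to compare.

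One step, however, is not justified, and its conclusion is false in general. Your \emph{second} step uses $\rho_H^2=\rho_K^2=\mathrm{id}$. The hypothesis only provides bipartition-reversing automorphisms in the sense of Definition~\ref{prop:pi}, and these need not be involutions. For example, label $C_4$ by $\mathbb{Z}_4$ and let $\rho_H=\rho_K$ be the one-step rotation $i\mapsto i+1$. It reverses the bipartition but has order $4$. Then $\rho^2(h,k)=(h+2,k+2)\neq(h,k)$ for every vertex, so the restriction of $\rho$ to either component is neither an involution nor a polarity.

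So your claim to get \emph{slightly more} than the proposition holds only when $\rho_H$ and $\rho_K$ are themselves polarities. That is exactly the extra hypothesis of Corollary~\ref{cor:product_polarity}, where the paper adds involutivity separately. Nothing else in your proof depends on that step, so deleting it (and the word \emph{involutive} in your last paragraph) leaves a complete proof.

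A minor wording fix: the sentence saying every edge of the product joins $(a,a')$ to $(b,b')$ should apply only to edges of $\mathbb{G}_1$. Edges of $\mathbb{G}_2$ join $A_H\times B_K$ to $B_H\times A_K$.
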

\subsection{Polarities and Kronecker Products}\label{sec:polarities_kronecker} 

A polarity is a bipartition-reversing automorphism, so any theorem regarding graphs with bipartition-reversing automorphisms also holds for graphs admitting polarity. 
 Corollary~\ref{cor:product_polarity} then follows immediately from Proposition~\ref{prop:product_pi}, since the automorphism on $\mathbb{H} \odot \mathbb{K}$ will be an involution, as will that on $\mathbb{H} \otimes \mathbb{K}$.
\begin{corollary}\label{cor:product_polarity} 
Let $\mathbb{H}$ and $\mathbb{K}$ be bipartite graphs that admit polarity. Then both $\mathbb{H} \otimes \mathbb{K}$ and $\mathbb{H} \odot \mathbb{K}$ admit polarity, with the polarity $\rho$ as in Proposition~\ref{prop:product_pi}.
\end{corollary}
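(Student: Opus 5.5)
The plan is to verify the three required properties of $\rho=(\rho_H,\rho_K)$ directly: it is an automorphism, it is an involution, and it reverses the bipartition. This is done first on $\mathbb{H}\otimes\mathbb{K}$, and then we pass to the component $\mathbb{H}\odot\mathbb{K}$.

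\textbf{Automorphism.} Because $\rho_H$ and $\rho_K$ are bijections, $\rho$ is a bijection of $U_H\times U_K$. Take an edge $((h_1,k_1),(h_2,k_2))$ of $\mathbb{H}\otimes\mathbb{K}$. By Definition~\ref{def:kronecker}, $(h_1,h_2)\in\mathbb{E}_H$ and $(k_1,k_2)\in\mathbb{E}_K$. Since $\rho_H$ and $\rho_K$ are automorphisms, $(\rho_H(h_1),\rho_H(h_2))\in\mathbb{E}_H$ and $(\rho_K(k_1),\rho_K(k_2))\in\mathbb{E}_K$. The image pair is therefore again an edge. Applying the same argument to $\rho^{-1}=(\rho_H^{-1},\rho_K^{-1})$ gives the converse, so $\rho$ is an automorphism of $\mathbb{H}\otimes\mathbb{K}$. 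This is exactly Proposition~\ref{prop:product_pi} read componentwise.

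\textbf{Involution.} We have $\rho^2=(\rho_H^2,\rho_K^2)=(\mathrm{id},\mathrm{id})$, since each factor polarity is an involution. Also $\rho\ne\mathrm{id}$, because $\rho_H$ moves every vertex to the other side of $\mathbb{H}$.

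\textbf{Bipartition reversal.} By Proposition~\ref{prop:vertices_kronecker}, the components have vertex sets $C_1=(A_H\times A_K)\cup(B_H\times B_K)$ and $C_2=(A_H\times B_K)\cup(B_H\times A_K)$. Each $C_i$ is bipartite; for $C_1$ the sides are $A_H\times A_K$ and $B_H\times B_K$, since every edge changes the side in both coordinates. The map $\rho$ sends $A_H\times A_K$ to $B_H\times B_K$ and vice versa. So $\rho$ maps $C_1$ onto itself and swaps its two sides. The same holds for $C_2$, whose sides $A_H\times B_K$ and $B_H\times A_K$ are exchanged.

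Hence $\rho$ restricts to a polarity of $\mathbb{H}\odot\mathbb{K}$, whichever component is chosen. On the full (disconnected) bipartite graph $\mathbb{H}\otimes\mathbb{K}$, take the bipartition $(A,B)$ with $A=(A_H\times A_K)\cup(A_H\times B_K)$ and $B=(B_H\times B_K)\cup(B_H\times A_K)$. Then $\rho(A)=B$ and $\rho(B)=A$, so $\rho$ is a polarity there too.

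The only step needing care is this choice of bipartition for the disconnected product, since it is not unique. The natural choice is the one indexed by the $\mathbb{H}$-side, as above, and with that choice the verification is immediate.
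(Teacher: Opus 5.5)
Your proof is correct and follows the paper's route. The paper cites Proposition~\ref{prop:product_pi} for the fact that $(\rho_H,\rho_K)$ is a bipartition-reversing automorphism and then notes that it is an involution; your componentwise check simply unpacks that citation and makes the $\mathbb{H}\otimes\mathbb{K}$ case explicit. One small remark: the choice of bipartition for the disconnected product needs no care, since $\rho$ maps each component onto itself and swaps its two sides, so it reverses every bipartition of $\mathbb{H}\otimes\mathbb{K}$.
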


\begin{definition}[Composed Polarity]\label{def:composed_polarity}
    Let $\mathbb{H}$ and $\mathbb{K}$ be bipartite graphs with polarities $\rho_H$ and $\rho_K$. The \emph{composed polarity} $\rho$ on  $\mathbb{H} \otimes \mathbb{K}$ and on $\mathbb{H} \odot \mathbb{K}$ is the map $\rho((h, k)) = (\rho_H(h), \rho_K(k)).$ 
\end{definition}

We observe that when $\mathbb{H}$ and $\mathbb{K}$ are bipartite graphs admitting polarity, $\mathbb{H} \otimes \mathbb{K}$ admits other polarities beyond the composed polarity $\rho$.  
Any automorphism $\psi$ of $\mathbb{H} \odot \mathbb{K}$ induces an isomorphism $\phi_{\psi} \ne \rho$ sending one component of $\mathbb{H} \otimes \mathbb{K}$ onto the other, using $\psi$ in one direction and $\psi^{-1}$ in the other. It is therefore a polarity of $\mathbb{H} \otimes \mathbb{K}$ arising from one of its two bipartitions. 
The polarity quotient of $\phi_\psi$ is isomorphic to $\mathbb{H} \odot \mathbb{K},$ so differs from the polarity quotient of $\rho$ (discussed in Corollary~\ref{cor:main_isomorphism} of Theorem~\ref{th:main_isomorphism}). 
The subgroup $\langle \phi_\psi,\rho \rangle$ of $Aut(\mathbb{H} \otimes \mathbb{K})$ is dihedral. When $\phi_\psi$ and $\rho$ commute, this subgroup is the degenerate Klein-4 group; in that case, $\phi_\psi \cdot \rho$ is a third polarity on $\mathbb{H} \otimes \mathbb{K}$, where the three polarities arise from the two bipartitions of $\mathbb{H} \otimes \mathbb{K}$.
There is at least one isomorphism $\phi_{\psi}$ between the two components of $\mathbb{H} \otimes \mathbb{K}$ that commutes with $\rho$ and also preserves the polarity (i.e., maps every polarity orbit to another polarity orbit), so $Aut(\mathbb{H} \otimes \mathbb{K})$ contains at least one Klein-4. Finally, every dihedral subgroup generated in this way contains the composed polarity $\rho$.

We limit our discussion to the composed polarity $\rho$ for the rest of this paper, since this polarity leads to the structural and degree-diameter results in this paper. 

\section{Proofs of Structural Results}\label{sec:main_bipartite} 
\label{sec:main_th}

Here we prove Theorems \ref{th:main_isomorphism} and \ref{th:ulb} given in Section \ref{sec:intro_contributions}.
The theorems and corollaries from this section generalize easily to multiple factor graphs.

\subsection{Proofs of Theorems~\ref{th:main_isomorphism} and \ref{th:ulb}}
\begin{proof}[Proof of Theorem~\ref{th:main_isomorphism}]
	By Corollary~\ref{cor:product_polarity}, the polarity $\rho$ restricts to a polarity on $\mathbb{H} \odot \mathbb{K}.$ We define a map 
	\begin{align*}
		\phi: P(\mathbb{H} \odot \mathbb{K}) &\rightarrow P_H(\mathbb{H}) \otimes P_K(\mathbb{K}),\\
		[(h,k)] &\rightarrow  ([h]_H, [k]_K),
	\end{align*}
	where  $[(h,k)]$ is the equivalence class $\{(h,k), \rho((h,k))\} $, $[h]_H$ is the equivalence class $\{h, \rho_H(h)\}$, and $[k]_K$ is the equivalence class $\{k, \rho_K(k)\},$ under their respective polarities. 
	
	The two components of $\mathbb{H} \otimes \mathbb{K}$ are isomorphic, so without loss of generality, we let the vertices of $\mathbb{H} \odot \mathbb{K}$ be $(A_H \times A_K) \cup (B_H \times B_K)$ as in Proposition~\ref{prop:vertices_kronecker}, and recall that $\rho_H$ maps $A_H$ to $B_H$ and $\rho_K$ maps $A_K$ to $B_K$. By the definition of composed polarity, $\{(h,k), \rho((h,k))\}= \{(h,k), (\rho_H(h),\rho_K(k))\},$ 
	so 
    \begin{align}
		V(P(\mathbb{H} \odot \mathbb{K})) 
		&=\{ \{(h,k), (\rho_H(h),\rho_K(k))\} \mid h \in A_H, k \in A_K\}.   \label{eq:P_h_dot_k_reps}     \intertext{and by the definition of the Kronecker product,}
		V(P_H(\mathbb{H}) \otimes P_K(\mathbb{K})) &= V(P_H(\mathbb{H})) \times V(P_K(\mathbb{K}))  \nonumber \\ 
		&= \{(\{h,\rho_H(h)\},\{k,\rho_K(k)\}) \mid h \in A_H, k \in A_K \}. \nonumber
        \intertext{We then have that}
        \phi: \{(h,k), (\rho_H(h),\rho_K(k))\} &\rightarrow (\{h,\rho_H(h)\},\{k,\rho_K(k)\}), \text{ where } h \in A_H \text{ and } k \in A_K, \nonumber
	\end{align} 
	so $\phi$ is a well-defined bijective vertex mapping from $P(\mathbb{H} \odot \mathbb{K})$ to $P_H(\mathbb{H}) \otimes P_K(\mathbb{K}).$ 
	
	We finish by showing vertex adjacency. Let $[(h_0,k_0)]$ and $[(h_1,k_1)]$ be vertices of $P(\mathbb{H} \odot \mathbb{K})$.  By Equation~(\ref{eq:P_h_dot_k_reps}), 
    \begin{align*}
        [(h_0,k_0)] &= \{(h_0,k_0), (\rho_H(h_0),\rho_K(k_0))\}, \text{ and} \\
        [(h_1,k_1)] &= \{(h_1,k_1), (\rho_H(h_1),\rho_K(k_1))\}.
    \end{align*}
    By Definition~\ref{def:pol_quotient_graph}, $[(h_0,k_0)]$ and $[(h_1,k_1)]$ are adjacent in $P(\mathbb{H} \odot \mathbb{K})$ if and only if there exist $(h_0',k_0')\in [(h_0,k_0)]$ and $(h_1',k_1')\in [(h_1,k_1)]$ such that $(h_0',k_0')$ and $(h_1',k_1')$ are adjacent in $\mathbb{H} \odot \mathbb{K}.$ By the definition of the Kronecker product, this is true if and only if $h_0'$ is adjacent to $h_1'$ in $\mathbb{H}$ and $k_0'$ is adjacent to $k_1'$ in $\mathbb{K}.$ Since the composed polarity $\rho$ satisfies $\rho(h,k) = (\rho_H(h),\rho_K(k))$, by Definition~\ref{def:composed_polarity}, representatives of $[(h_0,k_0)]$ and $[(h_1,k_1)]$ are adjacent 
    if and only if representatives of $[h_0]_H$ and  $[h_1]_H$  are adjacent in $H$ and representatives of $[k_0]_K$ and  $[k_1]_K$ are adjacent in $K$. By Definition~\ref{def:pol_quotient_graph}, this is true if and only if $[h_0]_H$ is adjacent to  $[h_1]_H$ in $P_H(\mathbb{H})$ and $[k_0]_K$ is adjacent to  $[k_1]_K$ in $P_K(\mathbb{K}).$ This is true if and only if $([h_0]_H, [k_0]_K)$ is adjacent to $([h_1]_H, [k_1]_K)$ in $P_H(\mathbb{H}) \otimes P_K(\mathbb{K}),$ again by the definition of the Kronecker product.
    
    Thus, $\phi$ is a graph isomorphism.
\end{proof}
\begin{corollary}
\label{cor:main_isomorphism}
    Let $\mathbb{H}$ and $\mathbb{K}$ be as in Theorem~\ref{th:main_isomorphism}. Then $P(\mathbb{H} \otimes \mathbb{K})$ is isomorphic to $(P_H(\mathbb{H}) \otimes P_K(\mathbb{K})) \sqcup (P_H(\mathbb{H}) \otimes P_K(\mathbb{K})).$
\end{corollary}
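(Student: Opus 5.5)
The plan is to reduce the corollary to Theorem~\ref{th:main_isomorphism}, applied separately to each of the two components of $\mathbb{H} \otimes \mathbb{K}$. By Proposition~\ref{prop:vertices_kronecker}, the components have vertex sets $C_1 = (A_H \times A_K) \cup (B_H \times B_K)$ and $C_2 = (A_H \times B_K) \cup (B_H \times A_K)$, and Proposition~\ref{prop:kronecker_bipartite} shows there are no edges between them. First I will check that the composed polarity $\rho = (\rho_H,\rho_K)$ preserves each component. Since $\rho_H$ swaps $A_H$ and $B_H$, and $\rho_K$ swaps $A_K$ and $B_K$, the map $\rho$ sends $A_H \times A_K$ to $B_H \times B_K$ and back, and sends $A_H \times B_K$ to $B_H \times A_K$ and back. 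Hence every $\rho$-orbit $\{v,\rho(v)\}$ lies inside a single component.

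Next, the quotient graph splits along components. The vertices of $P(\mathbb{H} \otimes \mathbb{K})$ are the orbits, so they partition into those contained in $C_1$ and those contained in $C_2$. Two orbits are adjacent in the quotient exactly when some representatives are adjacent, and a self-loop arises when an orbit's two elements are adjacent. No edge of $\mathbb{H}\otimes\mathbb{K}$ joins $C_1$ to $C_2$, so no quotient edge joins the two orbit sets. The quotient is therefore the disjoint union $P(\mathbb{G}_1) \sqcup P(\mathbb{G}_2)$, where $\mathbb{G}_i$ is the component induced on $C_i$ and each $P(\mathbb{G}_i)$ is taken with respect to the restriction of $\rho$. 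By Corollary~\ref{cor:product_polarity}, these restrictions are polarities.

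For $\mathbb{G}_1$, which is the choice of $\mathbb{H}\odot\mathbb{K}$ used in the proof of Theorem~\ref{th:main_isomorphism}, that theorem gives $P(\mathbb{G}_1) \cong P_H(\mathbb{H}) \otimes P_K(\mathbb{K})$ directly. For $\mathbb{G}_2$, the statement "without loss of generality" in the theorem's proof needs justification, because we must match polarities and not only the underlying graphs. The cleanest way is to rerun the map $\phi: [(h,k)] \mapsto ([h]_H,[k]_K)$ on $\mathbb{G}_2$. Each orbit has a unique representative in $A_H \times B_K$, namely $(h,k)$, with partner $(\rho_H(h),\rho_K(k))$. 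The pairs $([h]_H,[k]_K)$ with $h\in A_H$ and $k\in B_K$ again run bijectively over $V(P_H(\mathbb{H}))\times V(P_K(\mathbb{K}))$, since every $\rho_K$-orbit meets $B_K$ exactly once. The adjacency argument from the theorem's proof carries over word for word, as it uses only the Kronecker adjacency rule and the form of $\rho$.

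Alternatively, I could relabel $\mathbb{K}$ by swapping the names $A_K$ and $B_K$. This keeps $\rho_K$ a polarity and makes $C_2$ play the role of $C_1$. The main point needing care is the bijectivity check on $C_2$, that is, confirming that orbits correspond to pairs of orbits in $P_H(\mathbb{H})\times P_K(\mathbb{K})$ and not to a double cover. This is settled by the one-representative-per-orbit observation above. Combining the two cases gives $P(\mathbb{H}\otimes\mathbb{K}) \cong (P_H(\mathbb{H})\otimes P_K(\mathbb{K}))\sqcup(P_H(\mathbb{H})\otimes P_K(\mathbb{K}))$.
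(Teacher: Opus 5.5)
Your proposal is correct and follows essentially the same route as the paper: split $\mathbb{H}\otimes\mathbb{K}$ into its two components, observe that the composed polarity $\rho$ restricts to a polarity on each, so that $P(\mathbb{H}\otimes\mathbb{K}) = P(\mathbb{G}_1)\sqcup P(\mathbb{G}_2)$, and apply Theorem~\ref{th:main_isomorphism} to each component. Your explicit handling of the second component, via the unique orbit representative in $A_H\times B_K$ or equivalently by relabeling $A_K\leftrightarrow B_K$, makes rigorous the ``without loss of generality'' step that the paper's proof leaves implicit.
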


\begin{proof}
    Let $\mathbb{G}_1$ and $\mathbb{G}_2$ be the isomorphic components of $\mathbb{H} \otimes \mathbb{K}.$ By Corollary~\ref{cor:product_polarity}, the composed polarity $\rho$ restricts to a polarity on each isomorphic component of  $\mathbb{H} \otimes \mathbb{K}$, so we may apply Theorem~\ref{th:main_isomorphism} to $\mathbb{G}_1$ and $\mathbb{G}_2:$ $$P(\mathbb{H} \otimes \mathbb{K}) = P(\mathbb{G}_1 \sqcup \mathbb{G}_2) = P(\mathbb{G}_1) \sqcup P(\mathbb{G}_2) \cong (P_H(\mathbb{H}) \otimes P_K(\mathbb{K})) \sqcup (P_H(\mathbb{H}) \otimes P_K(\mathbb{K})).$$ 
\end{proof}

\begin{lemma}\label{lemma:diam_ub}
    Let $\mathbb{H}$ and $\mathbb{K}$ be bipartite graphs admitting polarity with diameters $D(\mathbb{H})$ and $D(\mathbb{K})$ and let $P(\mathbb{H} \odot \mathbb{K})$ be the composed-polarity quotient on $\mathbb{H} \odot \mathbb{K}.$ Then $$D(P(\mathbb{H} \odot \mathbb{K})) \le \max(D(\mathbb{H}),D(\mathbb{K}))-1.$$ 
    \end{lemma}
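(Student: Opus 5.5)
The plan is to chain two earlier results. First, by Corollary~\ref{cor:product_polarity}, the composed polarity $\rho$ restricts to a polarity on the connected bipartite graph $\mathbb{G}=\mathbb{H}\odot\mathbb{K}$. Second, Proposition~\ref{prop:redkronecker_stats}(2) gives $D(\mathbb{G})=\max(D(\mathbb{H}),D(\mathbb{K}))$. It then suffices to show that for any connected bipartite graph $\mathbb{G}$ with a polarity $\rho$ we have $D(P(\mathbb{G}))\le D(\mathbb{G})-1$.

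Proposition~\ref{prop:polarity_props}(2) states this for $\overline{P(\mathbb{G})}$. Since self-loops never shorten or lengthen a path between distinct vertices, $D(P(\mathbb{G}))=D(\overline{P(\mathbb{G})})$, and the bound transfers directly.

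For completeness I would also sketch the direct argument, in case the loop convention needs checking. Take two classes $[u],[v]$ of $P(\mathbb{G})$ and choose representatives $u\in A$, $v\in A$; this is possible because every orbit $\{x,\rho(x)\}$ meets $A$ exactly once. Then $u$ and $v$, and likewise $u$ and $\rho(v)\in B$, lie at distances of opposite parity in $\mathbb{G}$, and both distances are at most $D(\mathbb{G})$.
\begin{itemize}
\item If either distance is at most $D(\mathbb{G})-1$, project a shortest path through the quotient map. Edges map to edges, so this gives a walk of at most that length from $[u]$ to $[v]$.
\item If both distances equal $D(\mathbb{G})$, this contradicts the fact that they have opposite parity.
\end{itemize}
Hence $d_{P}([u],[v])\le D(\mathbb{G})-1$.

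The main subtlety is not the parity argument but bookkeeping. We must confirm that the quotient map is a graph homomorphism under the paper's convention that retains loops (Definition~\ref{def:pol_quotient_graph}). We must also confirm that Proposition~\ref{prop:redkronecker_stats}(2) applies; its hypotheses (connectedness, and a bipartition-reversing automorphism on at least one factor) hold because polarities are such automorphisms. Combining the pieces yields
\[
D(P(\mathbb{H}\odot\mathbb{K}))\le D(\mathbb{H}\odot\mathbb{K})-1=\max(D(\mathbb{H}),D(\mathbb{K}))-1.
\]
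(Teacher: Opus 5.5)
Your proposal is correct and follows the paper's proof. It uses Corollary~\ref{cor:product_polarity} to get a polarity on $\mathbb{H}\odot\mathbb{K}$, Proposition~\ref{prop:redkronecker_stats}(2) for the diameter of $\mathbb{H}\odot\mathbb{K}$, and Proposition~\ref{prop:polarity_props}(2) for the quotient bound. Your remark that loops do not change distances, so the bound stated for $\overline{P}$ carries over to $P$, and your parity sketch just make explicit what the paper leaves implicit.
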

\begin{proof}
    By Corollary~\ref{cor:product_polarity}, $\mathbb{H} \odot \mathbb{K}$ admits polarity, so has a bipartition-reversing automorphism. So by Proposition~\ref{prop:redkronecker_stats}$(2)$, $ D(\mathbb{H} \odot \mathbb{K})-1 = \max(D(\mathbb{H}),D(\mathbb{K}))-1.$ By Proposition~\ref{prop:kronecker_bipartite} and the definition of the Reduced-Kronecker product, $\mathbb{H} \odot \mathbb{K}$ is bipartite, so by Proposition~\ref{prop:polarity_props}$(2)$, $D(P(\mathbb{H} \odot \mathbb{K})) \le D(\mathbb{H} \odot \mathbb{K})-1 = \max(D(\mathbb{H}),D(\mathbb{K}))-1.$ 
\end{proof}
Theorem \ref{th:ulb} follows quickly from the preceding results. 

\begin{proof}[Proof of Theorem~\ref{th:ulb}]
    $P(\mathbb{H} \odot \mathbb{K}) \cong P_H(\mathbb{H}) \otimes P_K(\mathbb{K})$ by Theorem~\ref{th:main_isomorphism}, so this follows from Proposition~\ref{prop:diam_kron} and Lemma \ref{lemma:diam_ub}. 
\end{proof}
\begin{corollary}\label{cor:ulb}
    Let $\mathbb{H}$ and $\mathbb{K}$ be as in Theorem~\ref{th:ulb}, where $D(P_H(\mathbb{H})) = D(\mathbb{H})-1$ and $D(P_K(\mathbb{K})) = D(\mathbb{K})-1.$ Then the isomorphic graphs $P_H(\mathbb{H}) \otimes P_K(\mathbb{K})$ and $P(\mathbb{H} \odot \mathbb{K})$ have diameter $\max(D(\mathbb{H}),D(\mathbb{K}))-1.$
\end{corollary}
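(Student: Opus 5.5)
The plan is to sandwich the diameter $D$ between the two bounds of Theorem~\ref{th:ulb} and observe that, under the hypotheses of the corollary, the two bounds coincide.

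\emph{Lower bound.} By Theorem~\ref{th:ulb},
\[
\max\bigl(D(P_H(\mathbb{H})),D(P_K(\mathbb{K}))\bigr) \le D.
\]
Substituting the hypotheses $D(P_H(\mathbb{H})) = D(\mathbb{H})-1$ and $D(P_K(\mathbb{K})) = D(\mathbb{K})-1$ turns the left side into $\max(D(\mathbb{H})-1, D(\mathbb{K})-1)$. Since $x \mapsto x-1$ is monotone, this equals $\max(D(\mathbb{H}),D(\mathbb{K}))-1$.

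\emph{Upper bound.} The upper bound of Theorem~\ref{th:ulb}, which comes from Lemma~\ref{lemma:diam_ub}, gives directly $D \le \max(D(\mathbb{H}),D(\mathbb{K}))-1$.

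Combining the two inequalities forces $D = \max(D(\mathbb{H}),D(\mathbb{K}))-1$. This value holds for both graphs, because $P(\mathbb{H} \odot \mathbb{K}) \cong P_H(\mathbb{H}) \otimes P_K(\mathbb{K})$ by Theorem~\ref{th:main_isomorphism}, and isomorphic graphs have equal diameter.

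There is no real obstacle here. The only point to check is that the diameters in Theorem~\ref{th:ulb} are taken for the quotients with self-loops retained, and that the hypotheses refer to those same graphs. This causes no difficulty, because self-loops affect neither distances nor the diameter. So the hypothesis may equally be read for $\overline{P}$, consistent with Proposition~\ref{prop:polarity_props}(2).
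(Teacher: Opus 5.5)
Your proposal is correct and is essentially the paper's proof. The paper also simply notes that, under the hypotheses, the lower bound $\max(D(P_H(\mathbb{H})),D(P_K(\mathbb{K})))$ in Theorem~\ref{th:ulb} equals the upper bound $\max(D(\mathbb{H}),D(\mathbb{K}))-1$, and your remark that self-loops do not affect distances is a harmless extra check.
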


\begin{proof}
    Follows from Theorem~\ref{th:ulb}, as $\max(D(P_H(\mathbb{H})), D(P_K(\mathbb{K})))=\max(D(\mathbb{H}), D(\mathbb{K}))-1.$
\end{proof}

Any pair of graphs that satisfy the conditions of Corollary~\ref{cor:ulb} may be used in the construction here to obtain new graphs whose exact diameter is known. 
Many examples of such graphs may be constructed: for example, the family of ladder graphs $L_n$ has this property, as does the family of graphs made by attaching two copies of the star graph $K_{1,n}$ by an edge joining the center vertices. 

Generalized $n$-gons that admit polarity are important examples of graphs of this type; they themselves are large, and constructions from graphs in these families are shown in the next two sections to produce very large graphs. 

\section{Structural Results Applied to Generalized Polygons}\label{sec:genpolys_complete}
Here, we apply the results from the previous section to the special case of generalized polygons to construct new families of large graphs.

\subsection{Background}\label{sec:genpolys} Generalized polygons were introduced by Tits in 1959 in \cite{gen_poly_orig_tits}, and have seen extensive development since then, as discussed in Van Maldeghem's book \cite{GenPolys_VanMaldgehem}, Thas' article in  \cite[Chapter 9]{THAS_gen_polys}, Payne and Thas' book on generalized quadrangles \cite{paynethas} and elsewhere. 
Intuitively, the generalized $n$-gon has many $n$-gon subgraphs and no $m$-gon subgraphs for $m<n$, where every point-line pair is included in some $n$-gon. Equivalently, a generalized $n$-gon is a graph such that its incidence graph has diameter $n$ and girth $2n$ \cite{GenPolys_VanMaldgehem}. 

In this paper, we are concerned with the combinatorial properties of generalized polygons much more than with their geometric structure. 

\begin{definition}[Incidence-graph representation of a point-line geometry] \label{def:incidence_genpoly}
    Let $\Gamma=(\mathcal{P}, \mathcal{L}, \mathbf{I})$ be a point-line geometry, where $\mathcal{P}$ and $\mathcal{L}$ are the sets of points and lines and $\mathbf{I}$ is the incidence relationship between them. An incidence-graph representation $\mathbb{G}(\Gamma)$ of $\Gamma$ is a graph in which $V(\mathbb{G}(\Gamma))=\mathcal{P} \cup \mathcal{L}$, and $(x,y) \in E(\mathbb{G}(\Gamma))$ when $x \in \mathcal{P}$, $y \in \mathcal{L}$ and $x$ lies on $y$, or vice versa.
\end{definition}
\begin{definition}[Order of a generalized $n$-gon, thick $n$-gon]\cite{GenPolys_VanMaldgehem}
A generalized $n$-gon has \emph{order} $(s,t),$ where every line is incident with $s+1$ points and every point is incident to $t+1$ lines. A generalized $n$-gon is \emph{thick} when both $s,t>1,$ and \emph{thin} otherwise.
\end{definition}
 Since a generalized polygon admitting polarity is self-dual, it has order $(s,t)$ with $s=t$. 

A well-known theorem of Feit and Higman \cite[Theorem 1]{feit_higman} implies that a thick generalized $n$-gon (i.e., with an incidence graph of degree $\Delta\ge 3$) exists only when $n \in \{2,3,4,6,8\}$. 
It is also well known (e.g. \cite[Theorem 1]{feit_higman},   \cite[Proposition 7.2.7]{GenPolys_VanMaldgehem}) that there exists a thick generalized $n$-gon admitting a point-line polarity (mapping points to lines and vice versa) on its bipartite incidence graph if and only if $n \in \{2,3,4,6\}$. Thick generalized $n$-gons $\mathbb{G}_n(q,q)$ admitting polarity are known to exist when $n=2$ and $q \in \mathbb{Z}_{>0},$ and when $n=3$ and $q$ is a prime power (i.e., the Desarguesian projective planes). When 
$n=4$, $q$ must be $2^{2m+1}$ for $m \in \mathbb{Z}_{\ge 0}$, and when $n=6$, $q$ must be $3^{2m+1}$ for $m \in \mathbb{Z}_{\ge 0}$~\cite{GenPolys_VanMaldgehem}.

\subsection{Thin Polygons}\label{sec:thin}
Thin generalized $n$-gons admitting polarity are self-dual, so must have order $(1,1)$. These are then just ordinary $n$-gons, and the incidence graph of the thin generalized $n$-gon $\mathbb{G}_n(1,1)$ is  $C_{2n},$ the cycle on $2n$ vertices. 

Involutions on $C_{2n}$ are discussed in detail in \cite[Section 4]{bipartition_reversing_involutions_10}, summarized in this paragraph. 
$C_{2n}$ has two types of involutive automorphisms, the reflections and a rotation:
\begin{itemize}
    \item The reflections are of two types: those across an axis passing through two vertices, and those across an axis passing through two edges. The first type preserves the bipartition, so is not a polarity. The second type reverses the bipartition, so is a polarity. We call this a \emph{reflection polarity}. Its polarity quotient has exactly $2$ absolute vertices. 
    \item A rotation is an involution if and only if it rotates the vertices by $\pi$ radians. Such a rotation reverses the bipartition (so is a polarity) if and only if $n$ is odd.  We call this the \emph{rotation involution}, and the \emph{rotation polarity} when it is in fact a polarity. Its polarity quotient has no absolute vertices.
\end{itemize}
We apply the discussion from \cite{bipartition_reversing_involutions_10} to the thin generalized polygons and their $C_{2n}$ incidence graphs, and extend the results on thick generalized $n$-gons to all generalized $n$-gons admitting polarity.
\begin{figure}[!ht]
    \centering
    \begin{subfigure}[t]{.3\linewidth}
        \centering      
        \includegraphics[width=.75\linewidth]%
        {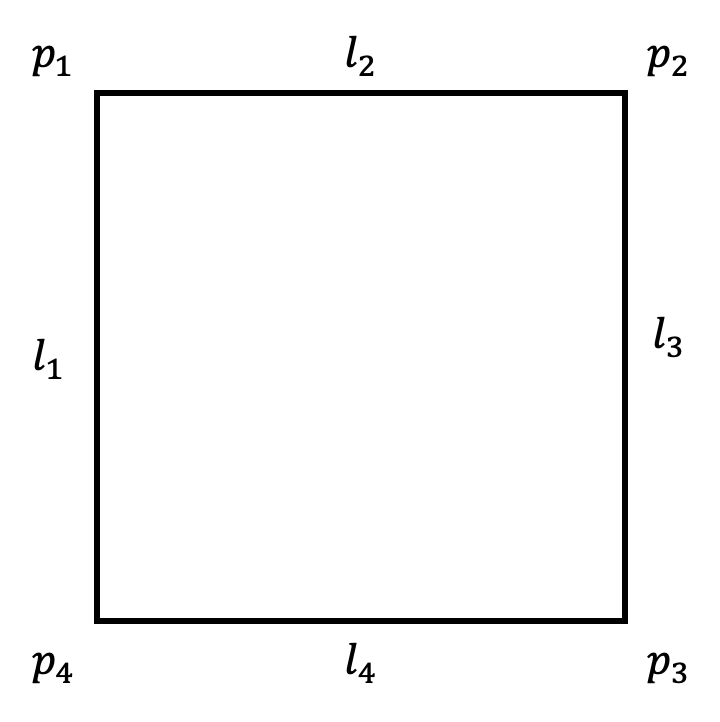}        
        \caption{The generalized quadrangle $\mathbb{G}_4(1,1)$ is represented by a grid of $4$ points $\{p_1, p_2, p_3, p_4\}$ and $4$ lines $\{l_1, l_2, l_3, l_4\}$.}
        \label{fig:g4_1_1}
    \end{subfigure}\hfill
    \begin{subfigure}[t]{.3\linewidth}
        \centering      
        \includegraphics[width=.7\linewidth]
        {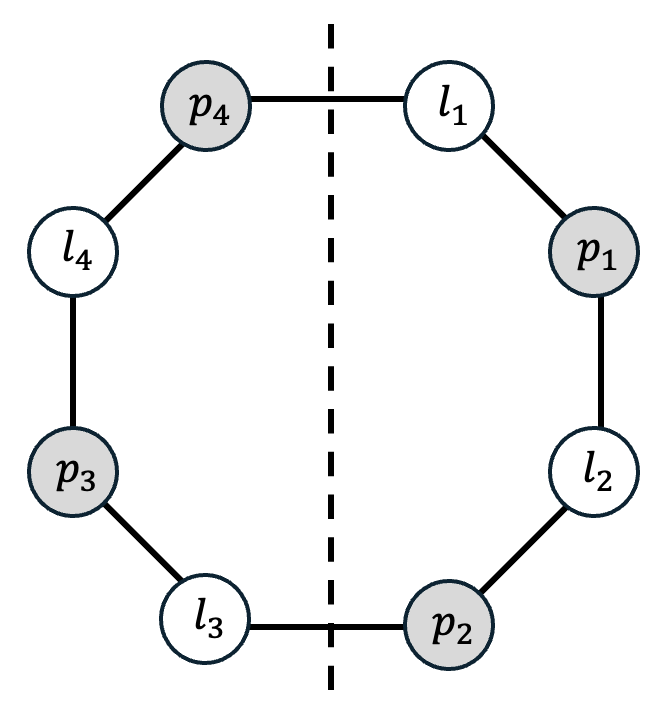}
        \caption{The bipartite incidence graph of $\mathbb{G}_4(1,1)$.  The two partitions are the set of points (grey) and the set of lines (white). The reflection polarity applied in this example is across the dotted-line axis.}  \label{fig:gq_1_1_incidence_new}
    \end{subfigure}\hfill
    \begin{subfigure}[t]{.3\linewidth}
        \centering      \includegraphics[width=.95\linewidth]{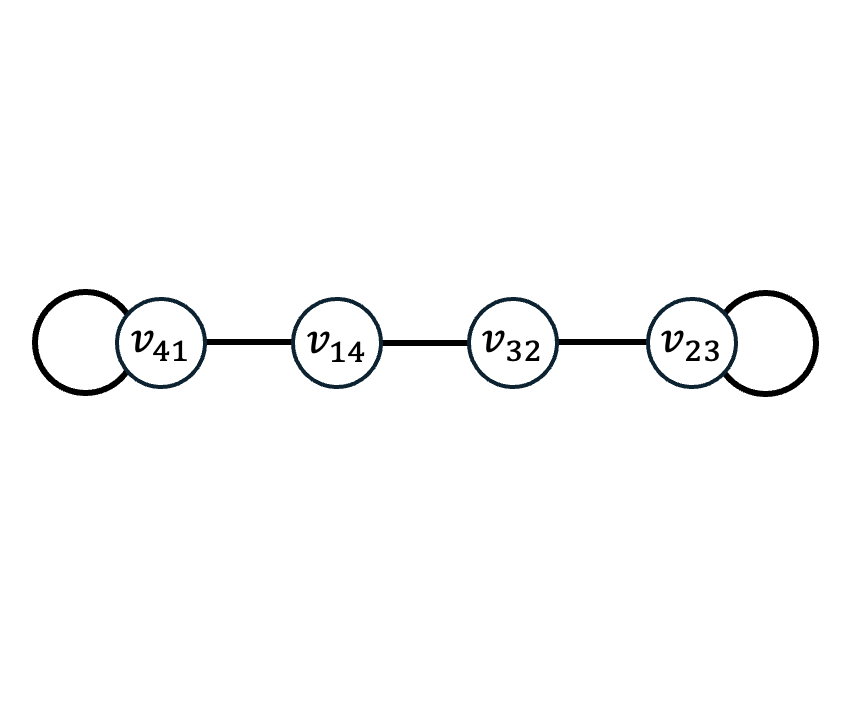}
        \caption{The polarity quotient graph $P(\mathbb{G}_4(1,1))$, where the quotient merges $p_i$ and $\ell_j$ to obtain $v_{ij} = [p_i] = [\ell_j]$. In this example, $n$ is even, so the rotation involution is not a polarity.}
        \label{fig:gq_1_1_polarity}
    \end{subfigure}\hfill
\caption{$\mathbb{G}_4(1, 1)$ and one of its reflection polarities, with its bipartite incidence and polarity quotient graphs.}
\label{fig:gq_1_1_and_incidence_new}
\end{figure}

Proposition~\ref{prop:thin_polarities_new} classifies the polarities on $\mathbb{G}_n(1,1)$, and follows immediately from \cite[Section 4]{bipartition_reversing_involutions_10}, since the incidence graph of the unique thin $n$-gon is $C_{2n}$.
\begin{proposition}\label{prop:thin_polarities_new}
    Let $\mathbb{G}=\mathbb{G}_n(1,1)$ be the unique thin generalized $n$-gon admitting polarity with $n \in \mathbb{Z}_{>1}$. Then $\mathbb{G}$ admits $n$ reflection polarities. The polarity quotient graph with respect to each reflection polarity is $P_n$, the path graph on $n$ vertices with a self-loop at each endpoint, of diameter $n-1.$ 
    
     $\mathbb{G}$ admits one additional rotation polarity if and only if $n$ is odd, rotating the vertices by $\pi$ radians. The polarity quotient graph with respect to the rotation polarity is $C_n$, the cycle graph on $n$ vertices with no self-loops, of diameter $\frac{n-1}{2}.$ 
    There are no further polarities on $\mathbb{G}$.
\end{proposition}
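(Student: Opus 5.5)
The plan is to classify the involutive automorphisms of $C_{2n}$ directly, keep those that reverse the bipartition, and then compute each quotient by hand. Label the vertices of $C_{2n}$ by $\mathbb{Z}_{2n}$, with $i \sim i\pm 1$. Take the points $\mathcal{P}$ to be the even vertices and the lines $\mathcal{L}$ to be the odd ones. Then $\mathrm{Aut}(C_{2n})$ is the dihedral group $D_{2n}$ of order $4n$, consisting of the rotations $x\mapsto x+a$ and the reflections $x\mapsto a-x$.

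First I would find the involutions that swap parity.
\begin{itemize}
\item A rotation $x\mapsto x+a$ is an involution only for $a=n$. It reverses parity exactly when $a$ is odd, that is, when $n$ is odd. This gives the single extra rotation polarity.
\item Every reflection $x\mapsto a-x$ is an involution. It reverses parity exactly when $a$ is odd.
\end{itemize}
There are $n$ odd residues $a$ in $\mathbb{Z}_{2n}$, and the reflections with $a$ odd are exactly those whose axis passes through the midpoints of the two edges $\{(a-1)/2,(a+1)/2\}$ and its antipode. This yields $n$ reflection polarities. Since these exhaust $D_{2n}$, there are no further polarities, consistent with \cite[Section 4]{bipartition_reversing_involutions_10}.

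Next I would compute the reflection quotient. Every reflection polarity is conjugate under rotation to $x\mapsto 1-x$, so it suffices to handle that one. Its orbits are $\{i,1-i\}$ for $i=1,\dots,n$, and these give $n$ classes. For $1\le i<n$ the edge $(i,i+1)$ joins the class of $i$ to the class of $i+1$, so the classes form a path. The only fixed edges are $(0,1)$, which joins $1$ to $\rho(1)=0$, and $(n,n+1)$, which joins $n$ to $\rho(n)=1-n=n+1$. These make $1$ and $n$ absolute points, so there is a self-loop at each endpoint of the path. Every other edge maps to one of these path edges, so the quotient is $P_n$ with loops at both ends, of diameter $n-1$.

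Then I would compute the rotation quotient. For $n$ odd, the orbits are $\{i,i+n\}$, giving $n$ classes indexed by $\mathbb{Z}_n$. Edges $i\sim i+1$ descend to $[i]\sim[i+1]$, which gives $C_n$. A loop would require $i\pm1\equiv i+n \pmod{2n}$, which is impossible for $n>1$ odd except when $n=1$, and that case is excluded. For $n\ge3$ no multi-edge collapses, so the quotient is the cycle $C_n$ with diameter $(n-1)/2$.

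The only delicate point is the count and the conjugacy claim for the reflections, together with making sure that the degenerate small case $n=2$ ($C_4$) still gives $2$ reflection polarities and the path $P_2$ with two loops. I would check that case by hand.
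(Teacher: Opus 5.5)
Your proof is correct and follows essentially the same route as the paper. The paper obtains the proposition by citing the classification in \cite[Section 4]{bipartition_reversing_involutions_10} of the involutive automorphisms of $C_{2n}$ (edge-axis reflections, vertex-axis reflections, and the half-turn) and keeping those that reverse the bipartition; you carry out that dihedral classification and both quotient computations explicitly, including the conjugacy reduction and the $n=2$ check, which the paper leaves to the reference.
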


Proposition~\ref{prop:thin_polarities_new} shows that the diameter condition of Corollary~\ref{cor:ulb} fails for the rotation polarity. On the other hand, it succeeds for any reflection polarity. 

We need Corollary~\ref{cor:ulb}, applied to thin polygons, to construct some of the large graphs in subsequent sections. We therefore use the reflection polarity when speaking of a polarity of a thin generalized polygon in this paper. 

To illustrate the concepts in Sections~\ref{sec:genpolys} and \ref{sec:thin}, the thin generalized polygon $\mathbb{G}_4(1,1)$ is shown in Figure~\ref{fig:gq_1_1_and_incidence_new}, with its incidence graph and polarity quotient with respect to one of the reflection polarities.

\subsection{Prior Results on Generalized Polygons}
The following four propositions are well-known for thick generalized polygons, see for example the textbook \cite{GenPolys_VanMaldgehem}. After applying Proposition \ref{prop:thin_polarities_new}, these propositions hold for the thin generalized polygons with respect to the reflection polarity as well. We state them here in this slightly more general context as thin polygons will be useful in Section \ref{sec:quadexample}, and in particular in the construction of the new largest graphs in Example $1$ of Section~\ref{sec:new_graphs}.
\begin{proposition}~\cite[Lemma 1.5.4]{GenPolys_VanMaldgehem}\label{prop:polygon_stats}
    Let $\mathbb{G} = \mathbb{G}_n(q,q)$ be a generalized $n$-gon. Then
    \begin{enumerate}
        \item $\Delta(\mathbb{G}) = q+1,$ and
        \item $D(\mathbb{G}) = n.$
    \end{enumerate}
\end{proposition}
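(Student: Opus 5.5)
Part (1) comes from counting incidences, and part (2) from the axioms of a generalized $n$-gon, namely girth $2n$ and the fact that every two elements lie in a common ordinary $n$-gon. Throughout, $\mathbb{G}_n(q,q)$ is identified with its bipartite point-line incidence graph. We treat the thick case and the thin case $q=1$ separately only where the argument differs.

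\textbf{Degree.} By the definition of order $(s,t)=(q,q)$, every line is incident with $q+1$ points and every point with $q+1$ lines. In the incidence graph of Definition~\ref{def:incidence_genpoly}, the neighbours of a point-vertex are exactly the lines through it, and the neighbours of a line-vertex are exactly the points on it. Hence $\mathbb{G}$ is $(q+1)$-regular, and $\Delta(\mathbb{G})=q+1$.

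\textbf{Diameter, lower bound.} We use the characterization recalled above: the incidence graph of a generalized $n$-gon has girth $2n$ and diameter $n$. To justify the diameter directly, we use the axiom that any two elements lie in a common ordinary $n$-gon, i.e.\ a $2n$-cycle in $\mathbb{G}$. Any two vertices on a $2n$-cycle are joined along it by a path of length at most $n$, so $D(\mathbb{G})\le n$. For the reverse inequality, take an apartment (a $2n$-cycle) and two antipodal vertices $u,v$ on it. Suppose $d(u,v)<n$. Then a shortest $u$--$v$ path together with one of the two arcs of length $n$ of the cycle forms a closed walk of length less than $2n$. This closed walk contains a cycle of length less than $2n$, unless the shortest path coincides with part of the arc, which its length excludes. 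That contradicts girth $2n$, so $d(u,v)=n$.

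\textbf{Thin case.} For $q=1$ the graph is $C_{2n}$, with degree $2=q+1$ and diameter $n$, which is immediate.

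\textbf{Main obstacle.} The only delicate point is the ``closed walk contains a short cycle'' step. We would handle it with the standard argument: take the first vertex where the shortest path leaves the arc and the first vertex where it returns to the arc; these give a genuine cycle of length at most $d(u,v)+n<2n$. Since the paper states that these facts are well known and cites them, the proof may simply invoke \cite[Lemma 1.5.4]{GenPolys_VanMaldgehem}, supplemented by the $C_{2n}$ check for thin polygons.
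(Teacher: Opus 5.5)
Your argument is correct, but it takes a more self-contained route than the paper, which gives no proof of its own. The paper cites \cite[Lemma 1.5.4]{GenPolys_VanMaldgehem} for thick polygons and handles thin ones only through the observation in Section~\ref{sec:thin} that the incidence graph of $\mathbb{G}_n(1,1)$ is $C_{2n}$. That is exactly the fallback in your last sentence.

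Your main text instead derives the degree directly from the definition of order. It derives the diameter from Tits' axioms: girth $2n$, and any two elements lying in a common apartment. The ear argument is sound: if antipodal vertices of an apartment were at distance $d(u,v)<n$, it produces a cycle of length at most $d(u,v)+n<2n$.

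What your route buys is uniformity. Nothing in it uses thickness, so $q=1$ is covered by the same argument, and so is $n=2$, where the graph is $K_{q+1,q+1}$. Your separate $C_{2n}$ check is therefore redundant, and no appeal to polarities is needed to extend the statement to thin polygons.

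What the citation buys is brevity. Under the characterization the paper itself quotes in Section~\ref{sec:genpolys} (incidence graph of diameter $n$ and girth $2n$), part~(2) is literally the definition. Your apartment argument is needed only if one starts from Tits' axiom system.

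One cosmetic point: the paragraph headed ``lower bound'' actually proves both inequalities.
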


\begin{proposition}~\cite[Lemma 1.5.4]{GenPolys_VanMaldgehem}\label{prop:polygon_polarity_stats_verts}
        Let $\mathbb{G} = \mathbb{G}_n(q,q)$ be a  generalized $n$-gon admitting polarity, with $P(\mathbb{G})$ its polarity quotient graph. Then 
        $|V(P(\mathbb{G}))| = |V(\overline{P(\mathbb{G})})| = \sum_0^{n-1}q^i.$\end{proposition}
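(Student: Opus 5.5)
The plan is to count vertices of the incidence graph and halve. By Proposition~\ref{prop:polarity_props}(3), $|V(\overline{P(\mathbb{G})})| = |A| = |B|$, where $A$ is the set of points and $B$ the set of lines. Deleting self-loops does not change the vertex set, so $|V(P(\mathbb{G}))| = |V(\overline{P(\mathbb{G})})|$ immediately. It therefore suffices to show that the number of points of $\mathbb{G}_n(q,q)$ is $\sum_{i=0}^{n-1} q^i$.

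To count points, fix a point $p$ and partition all vertices of the incidence graph by their distance from $p$. This distance ranges from $0$ to $n$, by Proposition~\ref{prop:polygon_stats}(2).

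The key structural fact is girth $2n$. It implies that for each $j<n$, every vertex at distance $j$ from $p$ has a unique neighbor at distance $j-1$. All its other neighbors lie at distance $j+1$, since a second neighbor at distance $\le j$ would create a cycle of length $<2n$. Because every vertex has degree $q+1$, the layer sizes are therefore
\[
N_0 = 1,\qquad N_1 = q+1,\qquad N_j = (q+1)q^{j-1}\ \text{ for } 1\le j\le n-1.
\]
Vertices at distance $n$ (the opposite vertices) are handled by double counting edges between layers $n-1$ and $n$. Each vertex in layer $n-1$ sends $q$ edges down into layer $n$. Each vertex in layer $n$ has all $q+1$ of its neighbors in layer $n-1$: it cannot have a neighbor at distance $n+1$, since the diameter is $n$, and it cannot have a neighbor at distance $n$, since the graph is bipartite. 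This gives
\[
N_n = \frac{q\,N_{n-1}}{q+1} = q^{n-1}.
\]

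Points are exactly the vertices at even distance from $p$. Alternatively, and more simply, add all layers and halve: $|A| = |B|$ by the polarity, so $|A| = \tfrac12\sum_{j=0}^n N_j$. The sum telescopes:
\[
1 + (q+1)\sum_{j=1}^{n-1}q^{j-1} + q^{n-1} = 2\sum_{i=0}^{n-1}q^i,
\]
which gives the claim.

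The main obstacle is justifying the layer structure rigorously, namely uniqueness of the back-neighbor for $j<n$ and that layer-$n$ vertices have all their neighbors in layer $n-1$. This rests on girth $2n$ and diameter $n$. For thin polygons ($q=1$, incidence graph $C_{2n}$) the formula can be checked directly instead: the quotient by a reflection polarity is the path on $n$ vertices from Proposition~\ref{prop:thin_polarities_new}, and $n=\sum_{i=0}^{n-1}1^i$.
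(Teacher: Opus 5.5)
Your proof is correct and matches the paper's route: the paper gives no argument of its own, simply citing Van Maldeghem's Lemma~1.5.4 for the point count and using $|V(P(\mathbb{G}))| = |A|$ from Proposition~\ref{prop:polarity_props}(3), and your distance-layer count (girth $2n$, diameter $n$, $(q+1)$-regularity, then halving via $|A|=|B|$) is the standard counting argument behind that cited lemma. Since $C_{2n}$ also has girth $2n$ and diameter $n$, your general count already covers $q=1$, so the separate thin-case check is optional.
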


Proposition~\ref{prop:abs_pts_all_new}(\ref{abs_pts_thick_3}) is presented in \cite[Proposition 7.2.8]{GenPolys_VanMaldgehem} and Proposition~\ref{prop:abs_pts_all_new}(\ref{abs_pts_thick_even}) in  \cite[Propositions 7.2.3 and 7.2.5] {GenPolys_VanMaldgehem}.  Proposition~\ref{prop:abs_pts_all_new}(\ref{abs_pts_thin_ref}) and (\ref{abs_pts_thin_rot}) are discussed in \cite[Section 4]{bipartition_reversing_involutions_10}.
\begin{proposition}\label{prop:abs_pts_all_new}
    Let $\mathbb{G} = \mathbb{G}_n(q,q)$ be a generalized $n$-gon admitting a polarity $\rho$, and let $P(\mathbb{G})$ be its polarity quotient graph. Let $N_\rho$ be the number of absolute vertices in the polarity quotient graph with respect to $\rho$. Then
    $N_\rho$ satisfies: 
    \begin{enumerate}[itemsep=2pt, parsep=1pt]
        \item{\makebox[4.5cm][l]{$q+1 \le N_\rho \le q\sqrt q+1,$} when $\mathbb{G}$ is thick and $n=3,$}\label{abs_pts_thick_3}
        \item{\makebox[4.5cm][l]{$N_{\rho} = q^{\frac{n}{2}}+1$} when $\mathbb{G}$ is thick and $n \in\{2,4,6\},$}\label{abs_pts_thick_even}
        \item{\makebox[4.5cm][l]{$N_{\rho} = 2$} when $\mathbb{G}$ is thin and $\rho$ is a reflection polarity,} and\label{abs_pts_thin_ref}
        \item{\makebox[4.5cm][l]{$N_{\rho} = 0$} when $\mathbb{G}$ is thin, $n$ is odd and $\rho$ is the rotation polarity.} \label{abs_pts_thin_rot}
    \end{enumerate}
    This accounts for every polarity on any generalized $n$-gon admitting polarity. 
\end{proposition}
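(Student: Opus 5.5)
The statement to be justified is Proposition~\ref{prop:abs_pts_all_new}. It is largely a compilation of known counts, so the plan is to split into the four listed cases plus the final completeness claim. For each case, either cite the source or verify directly from the explicit structure.

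\textbf{Thick cases (\ref{abs_pts_thick_3}) and (\ref{abs_pts_thick_even}).} These are quoted from \cite[Propositions 7.2.3, 7.2.5, 7.2.8]{GenPolys_VanMaldgehem}. There, absolute points are defined as points $p$ incident with the line $\rho(p)$. This agrees with Definition~\ref{def:abs_point}, since an edge $(v,\rho(v))$ in the incidence graph is exactly an incidence between $v$ and its image. Each such pair $\{p,\rho(p)\}$ contracts to one absolute vertex of $P(\mathbb{G})$, as in Definition~\ref{def:abs_vertices}, so $N_\rho$ equals the number of absolute points. For $n=3$ this is the classical bound of Baer: $q+1\le N_\rho\le q\sqrt q+1$. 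For $n=4,6$ the absolute points form an ovoid (respectively a distance-$3$ ovoid), of size $q^{n/2}+1$.

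The case $n=2$ is not covered by the cited book, so I would check it directly. A thick digon with $s=t=q$ has incidence graph $K_{q+1,q+1}$, in which every point meets every line. Hence every vertex is absolute for any polarity, and $N_\rho=q+1=q^{n/2}+1$.

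\textbf{Thin cases (\ref{abs_pts_thin_ref}) and (\ref{abs_pts_thin_rot}).} Here the incidence graph is $C_{2n}$, and I would argue directly from Proposition~\ref{prop:thin_polarities_new}.

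For a reflection across an axis through the midpoints of two opposite edges, the only edges $(v,\rho(v))$ are those two edges. This gives exactly $2$ absolute vertices, matching the two self-loops at the ends of the path quotient.

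For the rotation by $\pi$ with $n$ odd, $\rho(v)$ is at distance $n\ge 3$ from $v$, so it is never adjacent to $v$. Hence $N_\rho=0$, consistent with the loopless quotient $C_n$.

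\textbf{Completeness.} Thin polygons admitting polarity have order $(1,1)$, and Proposition~\ref{prop:thin_polarities_new} lists all their polarities. Thick polygons admitting polarity have $n\in\{2,3,4,6\}$, per the cited Feit–Higman/Van Maldeghem facts. So the cases above exhaust all possibilities.

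The main obstacle I anticipate is the bookkeeping around the degenerate thick digon and its convention for the order $(q,q)$. The thin-versus-thick boundary at $q=1$ also needs care: there, formula (\ref{abs_pts_thick_even}) would give $2$, which matches case (\ref{abs_pts_thin_ref}) anyway.
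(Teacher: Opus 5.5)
Your proposal is correct and takes the same approach as the paper, which proves this proposition purely by citation: parts (\ref{abs_pts_thick_3}) and (\ref{abs_pts_thick_even}) come from \cite[Propositions 7.2.3, 7.2.5, 7.2.8]{GenPolys_VanMaldgehem}, and parts (\ref{abs_pts_thin_ref}) and (\ref{abs_pts_thin_rot}) from \cite[Section 4]{bipartition_reversing_involutions_10}. The only difference is that you replace the thin-case citation, and the digon part of the thick case, with direct and correct checks on $C_{2n}$ and $K_{q+1,q+1}$. One wording point: under Definition~\ref{def:abs_point} absolute points and absolute lines come in pairs $\{p,\rho(p)\}$, so $N_\rho$ equals the number of absolute elements of $\mathcal{P}$ only, which is exactly what your pairing argument uses.
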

Proposition~\ref{prop:non-abs_pts_new} then follows from Propositions~\ref{prop:polygon_polarity_stats_verts} and \ref{prop:abs_pts_all_new}.
\begin{proposition}\label{prop:non-abs_pts_new}
    Let $\mathbb{G} = \mathbb{G}_n(q,q)$ be a generalized $n$-gon admitting a polarity $\rho$. Then $\mathbb{G}$ has non-absolute points with respect to $\rho$ if and only if $n>2$. 
\end{proposition}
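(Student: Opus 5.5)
We compare the number of absolute vertices $N_\rho$ with the number of vertices of the polarity quotient, case by case along Proposition~\ref{prop:abs_pts_all_new}. By Proposition~\ref{prop:polygon_polarity_stats_verts}, $P(\mathbb{G})$ has $\sum_{i=0}^{n-1} q^i$ vertices, and these orbits are in bijection with the points (the partition class $A$). A point is non-absolute exactly when its orbit is not an absolute vertex. So $\mathbb{G}$ has a non-absolute point if and only if $N_\rho < \sum_{i=0}^{n-1}q^i$. Proposition~\ref{prop:abs_pts_all_new} lists every polarity on every generalized $n$-gon admitting polarity, so it suffices to check this inequality in each listed case.

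\textbf{Case $n=2$.} Here $\sum_{i=0}^{1} q^i = q+1$. Proposition~\ref{prop:abs_pts_all_new}(\ref{abs_pts_thick_even}) gives $N_\rho = q+1$ in the thick case. In the thin case ($q=1$), every polarity of $C_4$ is a reflection polarity, since $n$ is even and the rotation is not a polarity. So item (\ref{abs_pts_thin_ref}) gives $N_\rho = 2 = q+1$. In both situations every point is absolute, which is the ``only if'' direction. This also matches the intuition that the digon is a complete bipartite graph, so every point is incident with its image.

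\textbf{Case $n>2$, thick.} For $n=3$ we have $N_\rho \le q\sqrt q + 1 < q^2+q+1$. For $n\in\{4,6\}$ we have $N_\rho = q^{n/2}+1 < \sum_{i=0}^{n-1}q^i$, because the right-hand side contains the term $q^{n/2}$ together with $q^0=1$ and further positive terms such as $q$.

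\textbf{Case $n>2$, thin.} The quotient has $n$ vertices. A reflection polarity gives $N_\rho = 2 < n$. The rotation polarity, which occurs only for odd $n$, gives $N_\rho = 0 < n$.

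The main point needing care is the bijection between points and quotient vertices, together with the claim that ``absolute vertex'' corresponds exactly to ``absolute point.'' This follows from Definition~\ref{def:abs_vertices} and the fact that each orbit $\{v,\rho(v)\}$ contains exactly one point and one line, since $\rho$ reverses the bipartition. Once that is in place, the rest is the elementary inequalities above.
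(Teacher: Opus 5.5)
Your proof is correct and follows the paper's route: the paper obtains this proposition in one line by comparing the count $N_\rho$ from Proposition~\ref{prop:abs_pts_all_new} with the vertex count $\sum_{i=0}^{n-1}q^i$ from Proposition~\ref{prop:polygon_polarity_stats_verts}, which is exactly your case-by-case comparison. You also check explicitly that absolute quotient vertices correspond bijectively to absolute points, since each orbit contains one point and one line; this fills in the one detail the paper leaves implicit.
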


Proposition~\ref{prop:polarity_gen_polys_diam_new} gives the exact diameter of the polarity quotient of a generalized $n$-gon admitting polarity. This observation appears in several early papers, but to our knowledge, Loz and Pineda-Villavicencio were the first to provide a proof for thick $n$-gons \cite[Theorem 2.7]{lozpineda2010}. Using Proposition~\ref{prop:thin_polarities_new}, we extend this result to thin polygons with reflection polarity.
\begin{proposition}
\label{prop:polarity_gen_polys_diam_new}
    Let $\mathbb{G}$ be a
    generalized $n$-gon admitting polarity, and let $P(\mathbb{G})$ be the polarity quotient graph with respect to a non-rotation polarity of $\mathbb{G}$. Then $D(P(\mathbb{G})) = D(\mathbb{G})-1$. \end{proposition}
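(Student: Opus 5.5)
The statement splits naturally into the thick case and the thin case, and I will handle them separately. In both cases the upper bound $D(P(\mathbb{G})) \le D(\mathbb{G})-1 = n-1$ comes from Proposition~\ref{prop:polarity_props}(2) together with Proposition~\ref{prop:polygon_stats}(2). Self-loops do not affect distances, so this bound applies to $P(\mathbb{G})$ as well as to $\overline{P(\mathbb{G})}$. All the work is therefore in the lower bound: I must exhibit two vertices of $P(\mathbb{G})$ at distance exactly $n-1$.

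\emph{Thick case.} Here every polarity is a non-rotation polarity, and the result is \cite[Theorem 2.7]{lozpineda2010}, which I would simply cite. If I had to reprove it, I would argue as follows. A walk of length $\ell$ between $[v]$ and $[w]$ in $P(\mathbb{G})$ lifts to a walk in $\mathbb{G}$ from $v$ to either $w$ or $\rho(w)$. At each step one may choose a representative, since an edge $([x],[y])$ comes from an edge $(x,y)$ or from $(x,\rho(y))$, and $\rho$ is an automorphism. Hence
\[
d_{P}([v],[w]) = \min\bigl(d_{\mathbb{G}}(v,w),\, d_{\mathbb{G}}(v,\rho(w))\bigr).
\]
One of these two distances is even and the other is odd. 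So it suffices to find $v,w$ with $\{d(v,w), d(v,\rho(w))\}=\{n-1,n\}$, that is, a vertex $w$ opposite $\rho(v)$ or opposite $v$, with the other distance equal to $n-1$. I expect this to be the main obstacle in a self-contained proof. I would try choosing $v$ non-absolute, which is possible by Proposition~\ref{prop:non-abs_pts_new} for $n>2$, and then using the abundance of opposite elements in a thick polygon to pick $w$ adjacent to $\rho(v)$'s opposite while avoiding $v$'s opposite set. Since the result is already published, I would rely on the citation.

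\emph{Thin case.} By Proposition~\ref{prop:thin_polarities_new}, the quotient of $C_{2n}$ under a reflection polarity is the path on $n$ vertices with a loop at each end. Its diameter is the distance between the two endpoints, namely $n-1 = D(C_{2n})-1$, because $D(C_{2n})=n$. This case is direct verification.

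\emph{Digon case.} For $n=2$ the quotient is complete with loops, of diameter $1 = 2-1$, which fits the formula. For thin $n=2$, the path on $2$ vertices also has diameter $1$. The only excluded polarity is the rotation polarity, for which the quotient $C_n$ has diameter $\frac{n-1}{2}$; this explains why the hypothesis excludes it.
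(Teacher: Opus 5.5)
Your proposal is correct and takes essentially the same route as the paper: the paper likewise cites \cite[Theorem 2.7]{lozpineda2010} for thick polygons, and reads off the diameter $n-1$ of the looped path quotient from Proposition~\ref{prop:thin_polarities_new} for thin polygons under a reflection polarity. Your additions are sound supplements, not a different approach: the upper bound via Proposition~\ref{prop:polarity_props}(2), the lifting formula $d_P([v],[w])=\min(d(v,w),d(v,\rho(w)))$, and the explicit digon check. Your sketched self-contained argument for the thick case is incomplete, and you are right to fall back on the citation there.
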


We use Proposition~\ref{prop:polarity_gen_polys_diam_new} and Lemma~\ref{lemma:polarity_props_diam_equality} to establish the results in this paper and to construct new largest-known graphs.
We develop the generalized quadrangle example in depth in Section~\ref{sec:quadexample}, illustrating the case where we have found new largest-known graphs of small degree.

\subsection{New Results on Generalized Polygons}\label{sec:main_genpolys}
We apply the general theorems of Section~\ref{sec:main_bipartite} to generalized polygons and derive asymptotic values for the number of vertices in graphs in these families. The advantage of using these polygons in this manner is in the fact that their polarity quotients asymptotically approach the Moore bound themselves, as shown in ~\cite{delorme_french_polys_replication_85}. Since they satisfy the theorems and corollaries in Section~\ref{sec:main_bipartite}, the Kronecker products of their polarity quotients will also approach the Moore bound asymptotically.

We recall from Section~\ref{sec:self_loop_convention} that self-loops that emerge during graph construction are dropped when we conduct degree/diameter comparisons.
These unlooped graphs occur throughout the next sections and  are clearly noted as such. 
\begin{corollary}
\label{cor:main_diameter}
        Let $\mathbb{G}_m=\mathbb{G}_m(q,q)$ be a generalized $m$-gon and  $\mathbb{G}_n=\mathbb{G}_n(r,r)$ be a generalized $n$-gon, both admitting polarities, and let $P_m(\mathbb{G}_m)$ and $P_n(\mathbb{G}_n)$ be their polarity quotient graphs with respect to non-rotation polarities.  Let $P(\mathbb{G}_m \odot \mathbb{G}_n)$ be the resulting composed-polarity quotient graph on $\mathbb{G}_m \odot \mathbb{G}_n.$  
        Then the isomorphic graphs $P(\mathbb{G}_m \odot \mathbb{G}_n)$ and $P_m(\mathbb{G}_m) \otimes P_n(\mathbb{G}_n)$ have diameter $D=\max(m,n)-1.$
\end{corollary}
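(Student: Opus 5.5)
The plan is to reduce the statement directly to Corollary~\ref{cor:ulb}, with $\mathbb{H}=\mathbb{G}_m(q,q)$, $\mathbb{K}=\mathbb{G}_n(r,r)$ and the chosen non-rotation polarities $\rho_H,\rho_K$. We then check that its hypotheses hold and read off the diameters.

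First, $\mathbb{G}_m$ and $\mathbb{G}_n$ are bipartite, since incidence graphs are bipartite by Definition~\ref{def:incidence_genpoly}. They are connected and admit polarities, so they fall under the setting of Theorem~\ref{th:main_isomorphism} and Theorem~\ref{th:ulb}. Hence $P(\mathbb{G}_m \odot \mathbb{G}_n)\cong P_m(\mathbb{G}_m)\otimes P_n(\mathbb{G}_n)$, and both graphs have the same diameter $D$.

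Second, I compute the diameters of the factors. By Proposition~\ref{prop:polygon_stats}(2), $D(\mathbb{G}_m)=m$ and $D(\mathbb{G}_n)=n$. This holds for thin polygons as well, since their incidence graph is $C_{2m}$, which has diameter $m$. By Proposition~\ref{prop:polarity_gen_polys_diam_new}, applied to a non-rotation polarity, $D(P_m(\mathbb{G}_m))=m-1$ and $D(P_n(\mathbb{G}_n))=n-1$. For thin polygons this is exactly the reflection-polarity case of Proposition~\ref{prop:thin_polarities_new}, where the quotient is a path on $m$ vertices with diameter $m-1$. This case is where the hypothesis ``non-rotation'' is essential: for $n$ odd and the rotation polarity, the quotient $C_n$ has diameter $(n-1)/2$, so the hypothesis of Corollary~\ref{cor:ulb} would fail.

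Finally, Corollary~\ref{cor:ulb} applies and gives $D=\max(D(\mathbb{G}_m),D(\mathbb{G}_n))-1=\max(m,n)-1$. Equivalently, Theorem~\ref{th:ulb} gives $\max(m-1,n-1)\le D\le \max(m,n)-1$, and the two bounds coincide.

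The only delicate point is making sure Proposition~\ref{prop:polarity_gen_polys_diam_new} covers every admissible case. This includes the degenerate $2$-gons, where $\mathbb{G}_2(q,q)$ is $K_{q+1,q+1}$ with diameter $2$ and the quotient $K_{q+1}$ with loops has diameter $1$, as well as the thin polygons with reflection polarity. I would verify these edge cases explicitly rather than rely solely on the cited thick-case result.
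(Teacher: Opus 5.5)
Your proposal is correct and is essentially the paper's own proof. The paper likewise takes the factor diameters $m,n$ from Proposition~\ref{prop:polygon_stats}(2) and the quotient diameters $m-1,n-1$ from Proposition~\ref{prop:polarity_gen_polys_diam_new}, then concludes via Corollary~\ref{cor:ulb}. Your explicit checks of the thin reflection-polarity case and the $2$-gon case are a harmless extra verification of what the paper folds into Proposition~\ref{prop:polarity_gen_polys_diam_new}.
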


\begin{proof}
    By Proposition~\ref{prop:polygon_stats} ($2$), $\mathbb{G}_m$ has diameter $m$ and $\mathbb{G}_n$ has diameter $n$, and by Proposition \ref{prop:polarity_gen_polys_diam_new},
     $D(P_m(\mathbb{G}_m))=m-1$ and $D(P_n(\mathbb{G}_n))=n-1.$
    The diameter follows from Corollary~\ref{cor:ulb}.  \end{proof}

\begin{lemma}\label{lemma:stats_gen_poly_kron_nge3}
    Let $\mathbb{G}_m(q,q), \mathbb{G}_n(r,r), P_m(\mathbb{G}_m) \otimes P_n(\mathbb{G}_n)$ and $P(\mathbb{G}_m \odot \mathbb{G}_n)$ be as in Corollary~\ref{cor:main_diameter}, where at least one of $m,n \ne 2,$ and let $ G= P(\mathbb{G}_m \odot \mathbb{G}_n) \cong P_m(\mathbb{G}_m) \otimes P_n(\mathbb{G}_n)$. Then  
    \begin{enumerate}[itemsep=2pt,parsep=0pt]
        \item $|V(\overline{G})|=(\sum_{i=0}^{m-1}{q^i})(\sum_{i=0}^{n-1}{r^i}),$ and
        \item $\Delta(\overline{G})=        		    (q+1)(r+1)$. 
    \end{enumerate}
\end{lemma}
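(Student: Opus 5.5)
The plan is to obtain both parts by working in $P_m(\mathbb{G}_m) \otimes P_n(\mathbb{G}_n)$, using the isomorphism of Theorem~\ref{th:main_isomorphism}. The self-loops, kept during the construction and deleted only at the end, have to be tracked carefully.

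\emph{Part (1).} Deleting self-loops does not change the vertex set, so $|V(\overline{G})| = |V(G)|$. By Proposition~\ref{prop:kronecker_props}, $|V(G)| = |V(P_m(\mathbb{G}_m))|\cdot|V(P_n(\mathbb{G}_n))|$. Proposition~\ref{prop:polygon_polarity_stats_verts} gives $\sum_{i=0}^{m-1} q^i$ and $\sum_{i=0}^{n-1} r^i$ for the two factors. For thin polygons ($q=1$), this sum is $m$, which agrees with the path $P_m$ of Proposition~\ref{prop:thin_polarities_new}.

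\emph{Part (2).} The first step is to count closed neighborhoods in each factor, treating a self-loop at $[v]$ as making $[v]$ one of its own neighbors. The argument in the proof of Lemma~\ref{lemma:polarity_props_diam_equality} applies: since $\mathbb{G}_m$ is bipartite and $\rho$ is a polarity, the edges $(v,w)$ at $v$ induce pairwise distinct edges $([v],[w])$. Hence $[v]$ has exactly $\deg(v) = q+1$ neighbors in $P_m(\mathbb{G}_m)$ (Proposition~\ref{prop:polygon_stats}(1)), and one of these is $[v]$ itself exactly when $v$ is absolute. The same holds in $P_n(\mathbb{G}_n)$ with $r+1$.

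By Definition~\ref{def:kronecker}, the neighbors of $(x,y)$ in the product are exactly the pairs $(x',y')$ with $x'$ a neighbor of $x$ and $y'$ a neighbor of $y$, loops included. These pairs are distinct, so $(x,y)$ has exactly $(q+1)(r+1)$ neighbors in $G$. Among them is $(x,y)$ itself if and only if both $x$ and $y$ carry self-loops, that is, both are absolute vertices. Therefore in $\overline{G}$ the degree of $(x,y)$ is $(q+1)(r+1)-1$ when $x$ and $y$ are both absolute, and $(q+1)(r+1)$ otherwise.

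It follows that $\Delta(\overline{G}) = (q+1)(r+1)$ as soon as some vertex $(x,y)$ has at least one non-absolute coordinate. Since at least one of $m,n$ differs from $2$, Proposition~\ref{prop:non-abs_pts_new} gives a non-absolute vertex in that factor. Pairing it with any vertex of the other factor yields a vertex of full degree.

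The main obstacle is the bookkeeping for loops: one must confirm that, in the Kronecker product, a factor loop contributes genuine non-loop edges $(x,y)\sim(x,y')$ with $y'\ne y$, and that only the diagonal combination becomes a loop, which the overline convention deletes after the product is formed. The hypothesis excluding $m=n=2$ is needed precisely because for digons every vertex is absolute, which would lower the maximum degree by one.
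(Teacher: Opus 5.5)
Your proof is correct. Part (1) matches the paper's argument. Part (2) takes a genuinely different route.

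The paper computes the degree on the bipartite side of Theorem~\ref{th:main_isomorphism}, in three steps:
\begin{enumerate}
\item It cites Proposition~\ref{prop:redkronecker_stats}(1) to get that $\mathbb{G}_m\odot\mathbb{G}_n$ is $(q+1)(r+1)$-regular.
\item It observes that $(v,w)$ is non-absolute for the composed polarity whenever $v$ is non-absolute in its factor.
\item It applies Lemma~\ref{lemma:polarity_props_diam_equality} once, to the product graph itself.
\end{enumerate}

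You stay on the other side, in $P_m(\mathbb{G}_m)\otimes P_n(\mathbb{G}_n)$. There you re-run the argument of Lemma~\ref{lemma:polarity_props_diam_equality} inside each factor and then do a loop-aware neighbor count in the Kronecker product.

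What the paper's route buys: it is shorter, because it reuses stated results. It also illustrates the paper's theme of computing each invariant on whichever side of the isomorphism is convenient.

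What your route buys:
\begin{itemize}
\item It is more self-contained. It needs neither the reduced-Kronecker degree formula nor any bookkeeping about which component of $\mathbb{G}_m\otimes\mathbb{G}_n$ the pair $(v,w)$ lies in.
\item It gives more information: the exact degree of every vertex of $\overline{G}$. That degree is $(q+1)(r+1)-1$ precisely at pairs of absolute vertices and $(q+1)(r+1)$ elsewhere. This is also the count underlying Proposition~\ref{prop:abs_verts}.
\item It makes explicit why factor self-loops must be kept in the product. They supply the genuine edges $(x,y)\sim(x,y')$ with $y'\neq y$ that keep the degree at $(q+1)(r+1)$. In the paper's route this fact is hidden inside the isomorphism.
\end{itemize}
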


\begin{proof}
    Clearly, graphs obtained by dropping all self-loops from isomorphic graphs are themselves isomorphic.

    1) We use the $P_m(\mathbb{G}_m(q,q)) \otimes P_n(\mathbb{G}_n(r,r))$ construction to get the number of vertices. Removing the self-loops at this point has no effect on the number of vertices, so by Proposition \ref{prop:polygon_polarity_stats_verts} and Proposition \ref{prop:kronecker_props}, the number of vertices is  $$|V(\overline{G})|= |V(G)| = \left(\sum_{i=0}^{m-1}{q^i}\right)\left(\sum_{i=0}^{n-1}{r^i}\right).$$

    2) We use the $P(\mathbb{G}_m(q,q) \odot \mathbb{G}_n(r,r))$ construction to get the degree. 
    Both $\mathbb{G}_m(q,q)$ and $\mathbb{G}_n(r,r)$ have a bipartition-reversing automorphism, so by Proposition~\ref{prop:redkronecker_stats}(1) and Proposition~\ref{prop:polygon_stats}(1), $\Delta(\mathbb{G}_m(q,q) \odot \mathbb{G}_n(r,r)) = (q+1)(r+1).$ $\mathbb{G}_m(q,q) \odot \mathbb{G}_n(r,r)$ is bipartite.
    Since $\mathbb{G}_m(q,q) \odot \mathbb{G}_n(r,r)$ is regular, all its vertices are of maximal degree $(q+1)(r+1).$ Since at least one of $m,n \ne 2$, by Proposition~\ref{prop:non-abs_pts_new}, there is a non-absolute vertex $v$ in one of the factors, so for any $w$ in the other factor, $(v,w)$ is non-absolute with respect to the composed polarity. 
    So by Lemma~\ref{lemma:polarity_props_diam_equality},   $$\Delta(\overline{G}) = (q+1)(r+1).$$
\end{proof}

We are now ready to prove Theorem \ref{th:mb}.
\begin{proof}[Proof of Theorem~\ref{th:mb}]
    Let $G = \overline{P(\mathbb{G}_n(q,q) \odot \mathbb{G}_n(r,r))}\cong \overline{P_q(\mathbb{G}_n(q,q)) \otimes P_r(\mathbb{G}_n(r,r))}$ (by Theorem~\ref{th:main_isomorphism}).

    To obtain the Moore bound, we take the degree $\Delta(G)=(q+1)(r+1)$ from Lemma~\ref{lemma:stats_gen_poly_kron_nge3} and the diameter $D(G)=\max(n,n)-1=n-1$ from Corollary~\ref{cor:main_diameter}, and substitute these into the formula for the Moore bound from Equation~\eqref{eq:MB} to get 
    $$
        MB(\Delta(G), D(G)) = 1+(q+1)(r+1)\sum_{i=0}^{n-2}{((q+1)(r+1)-1)^i}.   
    $$
        We also have from Lemma~\ref{lemma:stats_gen_poly_kron_nge3} that 
    $$
        |V(G)|=\left(\sum_{i=0}^{n-1}{q^i}\right)\left(\sum_{i=0}^{n-1}{r^i}\right).
    $$
    Taking the ratio of $|V(G)|$  to $MB(\Delta(G), D(G))$, the leading term of the numerator in terms of $r$ is $\sum_{i=0}^{n-1}{q^i}$, and the leading term of the denominator in terms of $r$ is $(q+1)^{n-1}$. This establishes the limit $f_q$ as $r \rightarrow \infty.$
\end{proof}
\begin{figure*}[!ht]
\centering
\includegraphics[width=.9\textwidth]{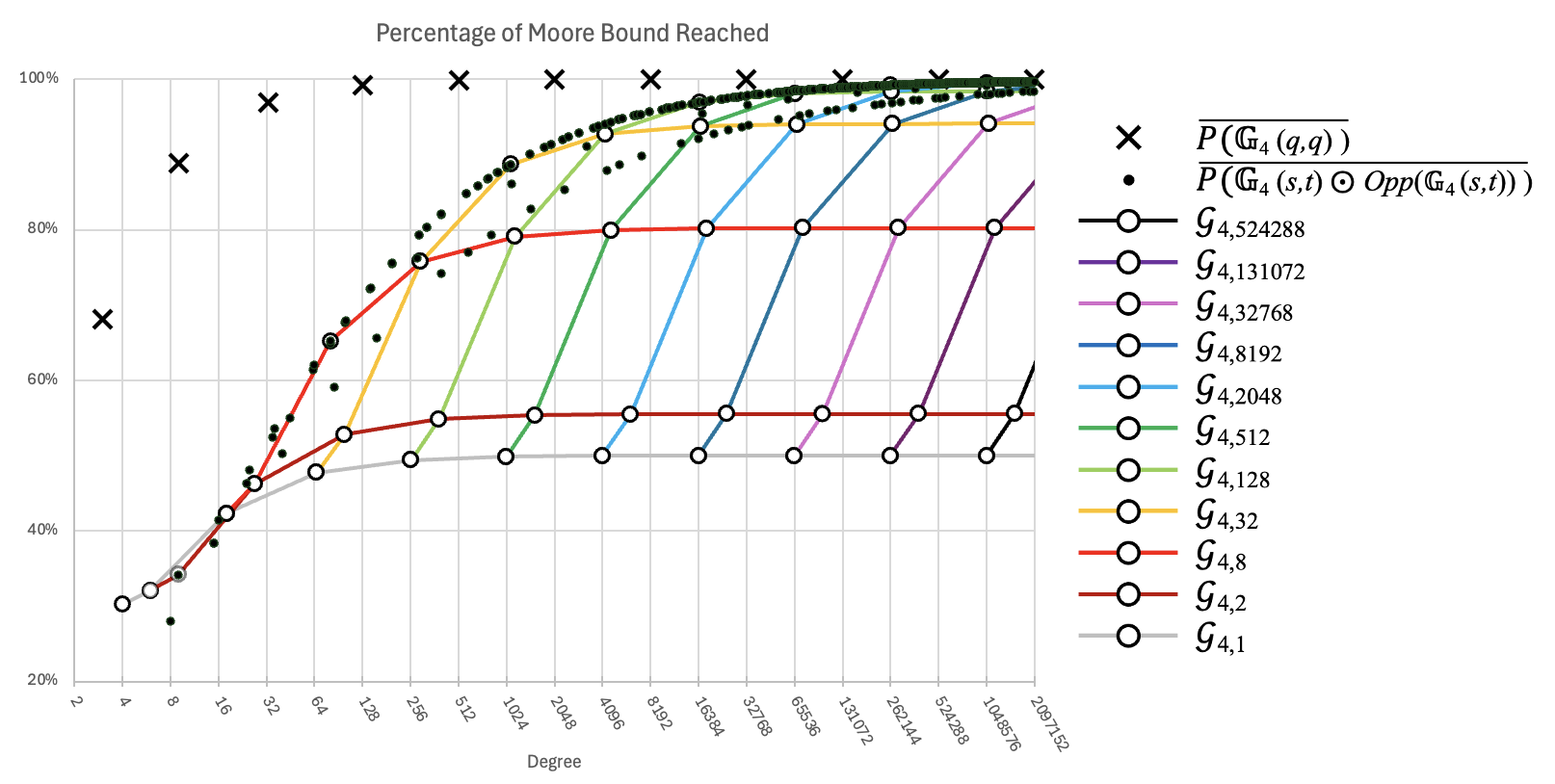}
\caption{Comparing the percentage of the Moore bound met by constructions in \cite{delorme_french_polys_replication_85} and \cite{delorme_opp_85} to the percentage met by the diameter-$3$ generalized-quadrangle constructions $\mathcal{G}_{4,q}$. The $\mathcal{G}_{4,q}$ constructions mostly cover degrees not covered by previously known graphs. The circles in the graph represent actual values. The colored lines serve to indicate the growth of each $\mathcal{G}_{4,q}$ family. The asymptotic limits are as in Theorem~\ref{th:mb}. $\mathcal{G}_4 = \mathcal{G}_{4,1} \cup \mathcal{G}_{4,2} \cup \mathcal{G}_{4,8} \cup \mathcal{G}_{4,32} \cup ...;$ the limit of the graph sizes in $\mathcal{G}_4$ approaches the Moore bound as $q,r \rightarrow \infty$.}
\label{fig:delorme_sgq}
\end{figure*} 
\subsection{New Infinite Families of Large Diameter-2,-3 and -5 Graphs and Their Approach to the Moore Bound}\label{sec:main_newfamilies} We discuss in this section constructions for generalized $n$-gons with $n>2$. Fixing $n$, we note that the limits of Equation (\ref{eq:limits}) over the families $G_{n,q}$ approach the Moore bound as $q \rightarrow \infty.$ 

\begin{corollary}\label{cor:main_corollary}
     Let $\mathbb{G}_n(q,q)$ and $\mathbb{G}_n(r,r)$ be incidence graphs of generalized $n$-gons, where 
     $n \in \{3,4,6\}$, $q$ and $r$ are such that $\mathbb{G}_n(q,q)$ and $\mathbb{G}_n(r,r)$ admit non-rotation polarities $\rho_q$ and $\rho_r$. Let $\mathcal{G}_{n}$ be the family of all composed-polarity quotient graphs $\overline{P(\mathbb{G}_n(q,q) \odot \mathbb{G}_n(r,r))}.$ Then
    $\mathcal{G}_{n}$
    is an infinite family of graphs of diameter $n-1$ covering degrees $(q+1)(r+1)$, whose orders asymptotically approach the Moore bound as both $q$ and $r \rightarrow \infty$.
\end{corollary}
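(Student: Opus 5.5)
The plan is to treat $\mathcal{G}_n$ as the union over admissible $q$ of the families $\mathcal{G}_{n,q}$ of Theorem~\ref{th:mb}, and to handle diameter, degree, infinitude, and asymptotics separately. For $n\in\{3,4,6\}$ we have $n>2$, so Corollary~\ref{cor:main_diameter} gives diameter $\max(n,n)-1=n-1$ for every member. Lemma~\ref{lemma:stats_gen_poly_kron_nge3} gives degree $(q+1)(r+1)$ and order $\left(\sum_{i=0}^{n-1}q^i\right)\left(\sum_{i=0}^{n-1}r^i\right)$. Since removing self-loops changes neither the diameter nor the order, these statements hold for the graphs under the overline.

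The family is infinite because, for each $n\in\{3,4,6\}$, there are infinitely many admissible $q$. These are all prime powers for $n=3$, $q=2^{2m+1}$ for $n=4$, and $q=3^{2m+1}$ for $n=6$. The thin case $q=1$ with a reflection polarity is also available. Distinct admissible pairs give distinct orders, so they give infinitely many graphs.

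For the asymptotics I would compute the ratio of the order to $MB((q+1)(r+1),n-1)$ directly, as in the proof of Theorem~\ref{th:mb}, but now let both parameters grow. The numerator is $q^{n-1}r^{n-1}(1+O(1/q))(1+O(1/r))$. The denominator is
\[
1+\Delta\sum_{i=0}^{n-2}(\Delta-1)^i=\Delta^{n-1}(1+O(1/\Delta)),\qquad \Delta=(q+1)(r+1).
\]
Hence
\[
\frac{|V|}{MB}=\frac{q^{n-1}r^{n-1}}{(q+1)^{n-1}(r+1)^{n-1}}\,(1+o(1))\longrightarrow 1
\]
as $q,r\to\infty$ jointly. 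An alternative route is to take Theorem~\ref{th:mb}'s limit $f_q=\sum_{i=0}^{n-1}q^i/(q+1)^{n-1}\to1$ as $q\to\infty$. That route needs some uniformity in the iterated limit, so the direct joint estimate is cleaner and is the one I would use.

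The main obstacle is making the error terms explicit so that the limit is genuinely joint rather than iterated. The key point is that $1+\Delta\sum_{i=0}^{n-2}(\Delta-1)^i\le\Delta^{n-1}(1+c/\Delta)$ for a constant $c$ depending only on $n$, because $n\le6$ is fixed. Once this is stated, the bound $\sum_{i=0}^{n-1}q^i\ge q^{n-1}$ together with the upper estimate on the Moore bound sandwiches the ratio between $\left(\tfrac{q}{q+1}\right)^{n-1}\left(\tfrac{r}{r+1}\right)^{n-1}(1+c/\Delta)^{-1}$ and $1$. This lower bound tends to $1$ as $\min(q,r)\to\infty$, which finishes the proof.
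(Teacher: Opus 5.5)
Your argument is correct, but it reaches the asymptotic claim by a different route than the paper.

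\textbf{The paper's route.} The paper's proof is a one-line appeal to Corollary~\ref{cor:main_diameter} and Theorem~\ref{th:mb}. For fixed $q$, the ratio of order to Moore bound tends to $f_q$ as $r\to\infty$, and $f_q=\sum_{i=0}^{n-1}q^i/(q+1)^{n-1}\to 1$ as $q\to\infty$. This is an iterated limit, and the paper does not address the uniformity needed to turn it into a joint one.

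\textbf{Your route.} You estimate the ratio directly. The bound $MB(\Delta,n-1)\le\sum_{i=0}^{n-1}\Delta^i\le\Delta^{n-1}\bigl(1+\tfrac{n-1}{\Delta}\bigr)$ shows that $c=n-1$ works. Combined with $\sum_{i=0}^{n-1}q^i\ge q^{n-1}$, it gives a lower bound that tends to $1$ as $\min(q,r)\to\infty$. The upper bound $1$ is just the Moore bound applied to $\overline{G}$, and you should state that explicitly.

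\textbf{What each buys.} Your route proves the genuinely joint limit that the phrase ``as both $q$ and $r\to\infty$'' calls for, and it gives an explicit rate. The paper's route is shorter and reuses Theorem~\ref{th:mb}. It also records the intermediate limits $f_q$ for each fixed $q$, which your joint estimate does not; for $n=4$ these are $\approx 56\%$ at $q=2$ and $\approx 80\%$ at $q=8$.

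\textbf{One small correction.} Your claim that distinct admissible pairs give distinct orders is false. The pairs $(q,r)$ and $(r,q)$ give the same order. Even genuinely different pairs can collide: for $n=3$, the pairs $(1,9)$ and $(3,4)$ both give order $3\cdot 91=13\cdot 21=273$ and degree $20$. Infinitude does not need this claim. Fix $q$ and let $r$ run through the infinitely many admissible values; the orders are then strictly increasing, hence unbounded.
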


\begin{proof}
    The corollary follows from Corollary~\ref{cor:main_diameter} and Theorem~\ref{th:mb}.     \end{proof}
\begin{corollary}\label{cor:main_corollary2}
    The construction in Corollary~\ref{cor:main_corollary} produces an infinite family of graphs for each diameter $2,3$ and $5$. The orders of the graphs in each family $\mathcal{G}_{n}$ asymptotically approach the Moore bound as both $q$ and $r \rightarrow \infty$.
\end{corollary}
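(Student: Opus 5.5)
The plan is to specialize Corollary~\ref{cor:main_corollary} to each admissible $n\in\{3,4,6\}$, which give diameters $n-1=2,3,5$. For each such $n$ we exhibit infinitely many admissible pairs $(q,r)$. By Corollary~\ref{cor:main_diameter} each resulting graph has diameter $n-1$, and by Lemma~\ref{lemma:stats_gen_poly_kron_nge3} it has degree $(q+1)(r+1)$.

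The first step is existence. Here we use the facts recalled in Section~\ref{sec:genpolys}. For $n=3$, Desarguesian planes $\mathbb{G}_3(q,q)$ with polarity exist for every prime power $q$. For $n=4$, polarity requires $q=2^{2m+1}$. For $n=6$, it requires $q=3^{2m+1}$. In each case the set of admissible $q$ is infinite, and each thick polygon's polarity is non-rotation. So the family $\mathcal{G}_n$ is infinite.

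One point needs checking: distinct pairs should give distinct graphs. It suffices to note that the order $\bigl(\sum_{i=0}^{n-1} q^i\bigr)\bigl(\sum_{i=0}^{n-1} r^i\bigr)$ is unbounded along the family.

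The second step is the asymptotics, which reuses the computation in the proof of Theorem~\ref{th:mb}. The order is $|V|\sim q^{n-1}r^{n-1}$. The Moore bound with $\Delta=(q+1)(r+1)$ and $D=n-1$ satisfies $\mathrm{MB}\sim \Delta^{n-1}\sim q^{n-1}r^{n-1}$. Hence the ratio is
\[
\frac{\sum_{i=0}^{n-1}q^i}{(q+1)^{n-1}}\cdot\frac{\sum_{i=0}^{n-1}r^i}{(r+1)^{n-1}}\cdot(1+o(1)),
\]
and each factor tends to $1$. The main (mild) obstacle is making this joint limit rigorous, rather than iterating the limit $f_q\to1$ from Theorem~\ref{th:mb}. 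I would bound each factor below by $1-O(1/q)$ and $1-O(1/r)$, and bound $\mathrm{MB}$ above by $\Delta^{n-1}(1+O(1/\Delta))$. This makes the ratio tend to $1$ as $\min(q,r)\to\infty$ along admissible values.
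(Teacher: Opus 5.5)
Your proposal is correct. Its skeleton matches the paper's. The paper gives no separate argument for this corollary. It treats it as the specialization of Corollary~\ref{cor:main_corollary} (itself justified by Corollary~\ref{cor:main_diameter} and Theorem~\ref{th:mb}) to $n\in\{3,4,6\}$, i.e.\ diameters $2,3,5$. Infinitude comes from the infinitely many admissible orders recalled in Section~\ref{sec:genpolys}.

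Where you differ is the asymptotic step. The paper's route is an iterated limit: Theorem~\ref{th:mb} fixes $q$ and lets $r\to\infty$ to obtain the fraction $f_q$ of (\ref{eq:limits}), and then one observes $f_q\to 1$ as $q\to\infty$ (the opening remark of Section~\ref{sec:main_newfamilies}). Strictly, an iterated limit does not by itself give the limit as both $q,r\to\infty$ without some uniformity. Your exact factorization
\[
\frac{|V(\overline{G})|}{\mathrm{MB}(\Delta,n-1)}=\frac{\sum_{i=0}^{n-1}q^i}{(q+1)^{n-1}}\cdot\frac{\sum_{i=0}^{n-1}r^i}{(r+1)^{n-1}}\cdot\frac{\Delta^{n-1}}{\mathrm{MB}(\Delta,n-1)},\qquad \Delta=(q+1)(r+1),
\]
supplies exactly that uniformity. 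Each factor depends on a single quantity and tends to $1$ as that quantity grows, so the product tends to $1$ as $\min(q,r)\to\infty$.

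You also do not need the upper-bound half of the estimate: $|V(\overline{G})|\le \mathrm{MB}(\Delta,n-1)$ holds automatically, since $\overline{G}$ is a simple graph of maximum degree $\Delta$ and diameter $n-1$. So the one-sided lower bounds you sketch already finish the proof.

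What the paper's route buys in exchange is the per-family limits $f_q$ (e.g.\ the $\approx 56\%$ and $\approx 80\%$ curves in Figure~\ref{fig:delorme_sgq}). Your symmetric joint-limit argument does not isolate these, but it is the one that actually proves the statement as phrased.
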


Theorem~\ref{th:mb} and Corollary~\ref{cor:main_corollary2} are illustrated in Figure~\ref{fig:delorme_sgq} for diameter-$3$ graphs ($n=4$). The orders of the family $\mathcal{G}_{4,2}$ approach $\approx 56\%$ of the Moore bound, the orders of the family $\mathcal{G}_{4,8}$ approach $\approx 80\%$ of the Moore bound, etc.

It is easy to show that graphs constructed from quadrangles and hexagons using  Corollary~\ref{cor:main_corollary} exist for more degrees than in Delorme's construction in \cite{delorme_french_polys_replication_85}. The same holds for generalized triangles, but we do not discuss that case here.
\begin{itemize}
    \item $\mathcal{G}_4:$
    Thick graphs in this family have degree $(2^k+1)(2^\ell+1) = 2^{k+\ell}+2^k+2^\ell+1$, where $k$ and $\ell$ are odd. Let $k+\ell=2m$. When $m>2$, their degrees fall entirely into the interval $(2^{2m}, 2^{2m+1}).$ This gives $\lceil{\frac{m}{2}}\rceil$ graphs in this degree interval. Including a graph constructed with the thin quadrangle as a factor ($k=0$ and $\ell = 2m-1),$ the construction covers $\lceil{\frac{m}{2}}\rceil+1$ degrees in $(2^{2m}, 2^{2m+1}),$  and no other degrees within $[2^{2m-1}, 2^{2m+1})$, as can be seen in Figure~\ref{fig:delorme_sgq}. (Note that the cases $m=1,2$ are exceptions to this counting rule.) This is in contrast to the discussion in \cite{delorme_french_polys_replication_85}, which produces exactly $1$ graph having degree in $[2^{2m-1}, 2^{2m+1}).$ 
    \item $\mathcal{G}_6:$ 
    Thick graphs in this family have degree $(3^k+1)(3^\ell+1) = 3^{k+\ell}+3^k+3^\ell+1$, where $k$ and $\ell$ are odd. Let $k+\ell=2m.$ Reasoning as in the $\mathcal{G}_4$ case, we see that when $m\ge 1,$ the construction covers  $\lceil{\frac{m}{2}}\rceil+1$ degrees in $(3^{2m}, 3^{2m+1}),$ and no other degrees within $[3^{2m-1}, 3^{2m+1})$. This is in contrast to the discussion in \cite{delorme_french_polys_replication_85}, which produces exactly $1$ graph having degree in $[3^{2m-1}, 3^{2m+1}).$
\end{itemize}

\subsection{Generalized 2-gons and the Infinite Family \texorpdfstring{$\mathcal{G}_2$}{}}
We complete here the characterization of polarity quotients of Kronecker products of generalized $n$-gons admitting polarity with a short discussion of Kronecker products of $2$-gons. 

The incidence graph of $G_2(q,q)$ is the balanced complete bipartite graph $K_{q+1,q+1}$ for any $q \in \mathbb{Z}_{>0},$ and any bijection from one partition to the other is a polarity. All vertices of this graph are absolute under any polarity, so the degree of the quotient of the Reduced-Kronecker product of a pair of $2$-gons does not follow the general rule given in  Lemma~\ref{lemma:polarity_props_diam_equality}. This changes the calculations throughout, so the $n=2$ case must be treated individually. 

Proposition~\ref{cor:main_corollary_neq2} gives an infinite family of diameter-$1$ graphs built using the constructions from Sections~\ref{sec:main_genpolys} and \ref{sec:main_newfamilies}.
They meet the Moore bound and cover degrees $d$ where $d+1$ is composite.
\begin{proposition}\label{cor:main_corollary_neq2}
     Let $\mathbb{G}_2(q,q)$ and $\mathbb{G}_2(r,r)$ be incidence graphs of generalized $2$-gons. Let $\mathcal{G}_{2}$ be the family of composed-polarity quotient graphs with self-loops deleted, derived from polarities on $\mathbb{G}_2(q,q)$ and $\mathbb{G}_2(r,r)$. Then
    $\mathcal{G}_{2}$
    is an infinite family of diameter-$1$ graphs, covering degrees $(q+1)(r+1)-1$. All graphs in this family meet the Moore bound.
\end{proposition}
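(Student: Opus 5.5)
The plan is to work throughout with the Kronecker model $G=P_q(\mathbb{G}_2(q,q))\otimes P_r(\mathbb{G}_2(r,r))$, which Theorem~\ref{th:main_isomorphism} identifies with the composed-polarity quotient $P(\mathbb{G}_2(q,q)\odot\mathbb{G}_2(r,r))$. The claim then reduces to showing that $\overline{G}$ is the complete graph $K_{(q+1)(r+1)}$.

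\textbf{Step 1: the factors.} First I identify each factor. The incidence graph of $\mathbb{G}_2(q,q)$ is $K_{q+1,q+1}$ with parts $A$ and $B$. A polarity $\rho$ is a bijection $A\to B$ extended to an involution. Its quotient has vertex set $\{[a] : a\in A\}$, which has $q+1$ elements. For distinct $a,a'\in A$, the vertex $a$ is adjacent to $\rho(a')\in B$, so $[a]$ and $[a']$ are adjacent. Every $a$ is also adjacent to $\rho(a)$, so every vertex of the quotient carries a self-loop. Hence $P_q(\mathbb{G}_2(q,q))$ is $K_{q+1}$ with a loop at every vertex, which I denote $K_{q+1}^{\circ}$. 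The same holds for $r$.

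\textbf{Step 2: the product.} Next I compute $K_{q+1}^{\circ}\otimes K_{r+1}^{\circ}$ directly from Definition~\ref{def:kronecker}. Take two vertices $(h_1,k_1)$ and $(h_2,k_2)$. Each coordinate pair is an edge of its factor, either as a genuine edge or as a loop when $h_1=h_2$ or $k_1=k_2$. So the two vertices are always adjacent, and the product is the complete graph on $(q+1)(r+1)$ vertices with a loop at every vertex. This is where the retained self-loops matter: without them, pairs agreeing in one coordinate would be non-adjacent.

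\textbf{Step 3: conclusions.} Deleting the loops gives $\overline{G}\cong K_{(q+1)(r+1)}$. This graph has diameter $1$ and degree $(q+1)(r+1)-1$. It meets the Moore bound, since $\mathrm{MB}(\Delta,1)=1+\Delta=(q+1)(r+1)$, which equals the order. Letting $q,r$ range over $\mathbb{Z}_{>0}$ gives infinitely many graphs, so the family is infinite. Its degrees are exactly the $d$ with $d+1=(q+1)(r+1)$ and $q,r\ge1$, that is, $d+1$ composite.

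\textbf{Main obstacle.} The only delicate point is Step 1. I must check that every polarity, whatever bijection it comes from, gives the fully looped complete graph, so that the statement holds uniformly for all polarities. I also must not invoke Lemma~\ref{lemma:stats_gen_poly_kron_nge3}, which excludes $m=n=2$. The degree instead comes directly from completeness. If needed, I can double-check it with Lemma~\ref{lemma:polarity_props_diam_equality}: every vertex is absolute, so each vertex of the quotient has degree $\Delta(\mathbb{G}_2\odot\mathbb{G}_2)-1=(q+1)(r+1)-1$ once its loop is removed.
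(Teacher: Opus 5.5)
Your proof is correct, but it reaches the conclusion by a different route from the paper's.

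The paper never computes the graph explicitly. It proceeds in three steps:
\begin{enumerate}
\item It gets the order $(q+1)(r+1)$ from Propositions~\ref{prop:kronecker_props} and \ref{prop:polygon_polarity_stats_verts}.
\item It gets diameter $1$ by combining Proposition~\ref{prop:polarity_gen_polys_diam_new}, which gives $D(P(\mathbb{G}_2))=1$, with Corollary~\ref{cor:ulb}. The thin case $q=1$ is handled by noting that since $2$ is even, only reflection polarities exist.
\item Only then does it infer that a diameter-$1$ graph on $(q+1)(r+1)$ vertices is complete, so the degree is $(q+1)(r+1)-1$.
\end{enumerate}
You instead identify each factor quotient as the fully looped $K_{q+1}$ and compute the Kronecker product by hand.

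The paper's route shows the $n=2$ case fits the general diameter framework with no new work. However, it leans on the general diameter bounds behind Corollary~\ref{cor:ulb} and on the cited result on diameters of polygon quotients. Your route has several advantages:
\begin{itemize}
\item It is elementary and self-contained.
\item It treats every polarity, i.e.\ every bijection $A\to B$, uniformly, without the classification of thin polarities.
\item It makes visible why the retained self-loops are essential.
\item It explains why the degree departs from Lemma~\ref{lemma:stats_gen_poly_kron_nge3}: every vertex is absolute.
\end{itemize}
You could even bypass Theorem~\ref{th:main_isomorphism}. The graph $\mathbb{G}_2(q,q)\odot\mathbb{G}_2(r,r)$ is itself $K_{(q+1)(r+1),(q+1)(r+1)}$, and your Step~1 applied to it directly gives the fully looped $K_{(q+1)(r+1)}$.
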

\begin{proof}
    The number of vertices of each graph in the family is $(q+1)(r+1),$ by Propositions \ref{prop:kronecker_props} and \ref{prop:polygon_polarity_stats_verts}. Since $2$ is not odd, the only polarity thin $2$-gons can have is the reflection polarity, so the diameter of the product is $1$, by Proposition~\ref{prop:polarity_gen_polys_diam_new} and Corollary~\ref{cor:ulb}. The degree is a consequence of the number of vertices and the diameter $1$. As complete graphs, these are all Moore graphs. 
\end{proof}

\subsection{Replication of Vertices}
 \label{sec:replication}
Given a graph $G(V,E)$, replicating a vertex $v\in V$ adds a new vertex $v'$ with identical neighborhood as $v$, resulting in a graph 
$G'(V'=V\cup \{v'\}, E'=E\cup \{(u,v') \mid u \in N_G(v)\})$,
as discussed in ~\cite{delorme_french_polys_replication_85}. It can easily be shown that if $G$ has a maximum degree $\Delta$ and diameter $D$, then $G'$ has a maximum degree $\Delta'\leq \Delta+1$ and diameter $D'=D$.

Delorme \cite{delorme_french_polys_replication_85}
further notes that if $G$ has a set $S$ of vertices with pairwise shortest path distance of at least $3$, then we can replicate entire set $S$ with the same constraints on degree and diameter as above. Intuitively, we see that when the shortest path between two vertices $u$ and $v$ is at least 3, they do not have common neighbors. Hence, adding replicas of $u$ and $v$ adds at most one neighbor (degree increment by $1$) for any vertex in the graph.

In particular, Delorme observes that the absolute vertices of the polarity quotients of both $\mathbb{G}_4(q,q)$ and $\mathbb{G}_6(q,q)$ are at distance at least $3$ from each other \cite[Section 8(c,d)]{delorme_french_polys_replication_85}, and further observes that for $\mathbb{G}_4(q,q)$, the absolute vertices of the polarity quotient can be replicated.
In Proposition~\ref{prop:abs_verts_delorme}, we extend this last observation to $\mathbb{G}_6(q,q)$ and include the thin polygons $\mathbb{G}_4(1,1)$ and $\mathbb{G}_6(1,1)$. 

We note that in the following sequence of three propositions, the generalized polygons are either quadrangles or hexagons, so $n$ is even. In this case, if they are thin, they admit only the reflection polarity, so we need not specify that in the conditions.

\begin{proposition}\label{prop:abs_verts_delorme}
 Let $\mathbb{G}_n(q,q)$ be the incidence graph of a generalized $n$-gon, where $n \in \{4,6\}$ and $q$ is such that the $n$-gon admits polarity. Then the absolute vertices of the polarity quotient are pairwise at distance at least $3$ from each other. \end{proposition}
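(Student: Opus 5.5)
We must show that two distinct absolute vertices $[u]$ and $[v]$ of $P(\mathbb{G})$, with $\mathbb{G}=\mathbb{G}_n(q,q)$ and $n\in\{4,6\}$, are at distance at least $3$. Equivalently, we rule out distance $1$ and distance $2$.

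The plan is to lift short paths in $P(\mathbb{G})$ to short closed walks in $\mathbb{G}$ and then contradict girth $2n\ge 8$. For the thin case $n\in\{4,6\}$, $q=1$, Proposition~\ref{prop:thin_polarities_new} already gives that the only polarities are reflections. The quotient is then the path $P_n$ with loops at its two endpoints, so the two absolute vertices are at distance $n-1\ge 3$. We therefore concentrate on the thick case, where the argument in fact works uniformly.

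Choose representatives $u\in A$ and $v\in A$ (points), so that $u\sim\rho(u)$ and $v\sim\rho(v)$ in $\mathbb{G}$. By the definition of the quotient, $[x]\sim[y]$ in $P(\mathbb{G})$ if and only if $x\sim y$ or $x\sim\rho(y)$. Since $\mathbb{G}$ is bipartite, for $x,y\in A$ this means $x\sim\rho(y)$.

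\emph{Distance $1$.} Suppose $[u]\sim[v]$. Then $u\sim\rho(v)$, and applying $\rho$ gives $\rho(u)\sim v$. So $u,\rho(u),v,\rho(v)$ form a closed walk
\[u-\rho(u)-v-\rho(v)-u\]
of length $4$. The vertices $u\ne v$ are distinct points and $\rho(u)\ne\rho(v)$ are distinct lines, so this is a genuine $4$-cycle. This contradicts girth $2n\ge 8$.

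\emph{Distance $2$.} Suppose $[u]\sim[w]\sim[v]$ with $w\in A$ and $[w]$ distinct from both. Then $u\sim\rho(w)$ and $v\sim\rho(w)$, and applying $\rho$ gives $w\sim\rho(u)$ and $w\sim\rho(v)$. This yields the closed walk
\[u-\rho(u)-w-\rho(v)-v-\rho(w)-u\]
of length $6$. To see it is a genuine $6$-cycle, check distinctness:
\begin{itemize}
\item the points $u,w,v$ are distinct because their classes are distinct;
\item the lines $\rho(u),\rho(v),\rho(w)$ are distinct because $\rho$ is a bijection.
\end{itemize}
A $6$-cycle contradicts girth $\ge 8$.

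It remains to handle the case where the middle vertex $[w]$ coincides with an endpoint, for example $[w]=[u]$. That would force $[u]\sim[v]$, which is the distance-$1$ case already excluded.

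The main obstacle is this distinctness bookkeeping: confirming that the lifted walks really are cycles rather than degenerate backtracking walks, including the loop cases. The girth argument itself is immediate, since generalized $n$-gons have girth $2n$.
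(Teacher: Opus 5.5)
Your proof is correct, but in the thick case it takes a different route from the paper. The paper gives no self-contained argument. For thick quadrangles and hexagons it relies on Delorme's observation \cite[Section 8(c,d)]{delorme_french_polys_replication_85}. It then adds the thin polygons $\mathbb{G}_4(1,1)$ and $\mathbb{G}_6(1,1)$ through the explicit path quotient of Proposition~\ref{prop:thin_polarities_new}, exactly as in your opening paragraph.

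You instead lift a quotient path of length $1$ or $2$ between absolute classes to a closed walk of length $4$ or $6$ in $\mathbb{G}$, using the absolute edges $u\rho(u)$ and $v\rho(v)$. Your distinctness check makes these walks genuine cycles, which contradicts girth $2n\ge 8$. The points are distinct because their classes are, the lines are distinct because $\rho$ is a bijection, and points never coincide with lines.

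What your route buys is that it is elementary and uniform. It uses only that $\rho$ is a partition-reversing involutive automorphism of a bipartite graph of girth at least $8$. So it does not depend on which polarity is chosen, and it does not need the classification of polarities of thin polygons. Your separate thin-case paragraph is in fact redundant, since $C_{2n}$ also has girth $2n$.

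The same bookkeeping also applies to a shortest path $[x_0],\dots,[x_k]$ between two absolute classes. Lift it step by step to a path $x_0\cdots x_k$ in $\mathbb{G}$. Distinct classes rule out $x_i=\rho(x_j)$, so closing up with the two absolute edges and the $\rho$-image of the path gives a $(2k+2)$-cycle. Hence the distance is at least $n-1$, and by Proposition~\ref{prop:polarity_gen_polys_diam_new} it is exactly $n-1$. The statement as cited in the paper asserts only distance at least $3$, which is all the replication argument needs; in exchange, the paper ties the result to Delorme's analysis of these specific polarities.
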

 
\begin{proposition}\label{prop:abs_verts}
  Let $\mathbb{G}_m(q,q)$ and $\mathbb{G}_n(r,r)$ be incidence graphs of generalized polygons as in Proposition~\ref{prop:abs_verts_delorme}. Then the polarity quotient graph $P(\mathbb{G}_m(q,q) \odot \mathbb{G}_n(r,r))$ has $(q^{m/2}+1)(r^{n/2}+1)$ absolute vertices of the polarity quotient, as does $\overline{P(\mathbb{G}_m(q,q) \odot \mathbb{G}_n(r,r))}$. 
\end{proposition}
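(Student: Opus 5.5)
The plan is to count absolute points of the composed polarity directly in $\mathbb{G}_m(q,q) \odot \mathbb{G}_n(r,r)$ and then pass to the quotient. By Definition~\ref{def:abs_point}, a vertex $(h,k)$ of $\mathbb{H}\odot\mathbb{K}$, with $\mathbb{H}=\mathbb{G}_m(q,q)$ and $\mathbb{K}=\mathbb{G}_n(r,r)$, is absolute for $\rho=(\rho_H,\rho_K)$ exactly when $((h,k),(\rho_H(h),\rho_K(k)))$ is an edge. By Definition~\ref{def:kronecker}, this happens if and only if $(h,\rho_H(h))\in \mathbb{E}_H$ and $(k,\rho_K(k))\in\mathbb{E}_K$. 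So $(h,k)$ is absolute for $\rho$ if and only if $h$ is absolute for $\rho_H$ and $k$ is absolute for $\rho_K$.

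Next I pass to orbits. As in the proof of Theorem~\ref{th:main_isomorphism}, each vertex of $P(\mathbb{H}\odot\mathbb{K})$ has a unique representative $(h,k)$ with $h\in A_H$ and $k\in A_K$. Absoluteness is constant on orbits, since $\rho$ is an automorphism and an involution. Hence the absolute vertices of the quotient are in bijection with pairs $(h,k)\in A_H\times A_K$ in which both coordinates are absolute. Equivalently, they are in bijection with pairs $([h]_H,[k]_K)$ of absolute vertices of $P_H(\mathbb{H})$ and $P_K(\mathbb{K})$. This matches the isomorphism $\phi$: a self-loop at $([h],[k])$ in the Kronecker product requires self-loops at both factors. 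The count is therefore $N_{\rho_H}\cdot N_{\rho_K}$.

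To evaluate the count I use Proposition~\ref{prop:abs_pts_all_new}. For thick $n\in\{4,6\}$, item (\ref{abs_pts_thick_even}) gives $q^{m/2}+1$ and $r^{n/2}+1$. In the thin case $q=1$, which must be the reflection polarity because $n$ is even, item (\ref{abs_pts_thin_ref}) gives $2=1^{m/2}+1$. So the formula $(q^{m/2}+1)(r^{n/2}+1)$ holds uniformly.

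Finally, for $\overline{P(\cdot)}$ I read the statement as counting the same vertices. Deleting self-loops does not change the vertex set, and ``absolute vertex'' refers to the orbit $[v]$ of an absolute point, per Definition~\ref{def:abs_vertices}. So the count is unchanged. The only delicate point is this bookkeeping between absolute points of $\mathbb{H}\odot\mathbb{K}$, which come in pairs $v,\rho(v)$, and absolute vertices of the quotient. Choosing the $A_H\times A_K$ representatives avoids double counting.
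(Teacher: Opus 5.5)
Your proposal is correct and follows essentially the same route as the paper. Absoluteness is coordinatewise because Kronecker adjacency is coordinatewise, so the count is $N_{\rho_H}N_{\rho_K}$, evaluated via Proposition~\ref{prop:abs_pts_all_new}, and deleting self-loops leaves the vertex set unchanged. The only difference is cosmetic: the paper argues in the quotient $P_m(\mathbb{G}_m(q,q))\otimes P_n(\mathbb{G}_n(r,r))$, where $(v_m,v_n)$ has a self-loop iff both coordinates do, and transports this through Theorem~\ref{th:main_isomorphism}; you argue in $\mathbb{G}_m(q,q)\odot\mathbb{G}_n(r,r)$ itself and count orbits via $A_H\times A_K$ representatives, which is equivalent and slightly more explicit about the orbit bookkeeping.
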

\begin{proof}
    The vertex  
    $(v_m, v_n) \in P_m(\mathbb{G}_m(q,q)) \otimes P_n(\mathbb{G}_n(r,r))$ is an absolute vertex of the polarity if and only if both $v_m$ and $v_n$ are absolute vertices of $P_m(\mathbb{G}_m(q,q))$ and $P_n(\mathbb{G}_n(r,r))$. The number of absolute vertices of the polarity quotient is the same in the polarity quotient with or without self-loops, so there are  $(q^{m/2}+1)(r^{n/2}+1)$ absolute vertices both of $P(\mathbb{G}_m(q,q) \odot \mathbb{G}_n(r,r))$ and of $\overline{P(\mathbb{G}_m(q,q) \odot \mathbb{G}_n(r,r))},$ by Theorem~\ref{th:main_isomorphism} and Proposition~\ref{prop:abs_pts_all_new}. 
\end{proof}
By Proposition~\ref{prop:abs_verts_delorme} and Theorem~\ref{th:main_isomorphism}, the absolute vertices in Proposition~\ref{prop:abs_verts} are pairwise at distance at least $3$. Corollary \ref{cor:replication} immediately follows, since each round of replication gives a non-absolute neighbor of an absolute vertex one additional neighbor.
\begin{corollary}\label{cor:replication}
    Let $\mathbb{G}_m(q,q)$ and $\mathbb{G}_n(r,r)$ be incidence graphs of generalized polygons as in Proposition~\ref{prop:abs_verts_delorme}. After $k \in \mathbb{Z}_{>0}$ replications of the absolute vertices of the polarity quotient $\overline{P(\mathbb{G}_m(q,q) \odot \mathbb{G}_n(r,r))}$, we have produced a graph of diameter $\max(m,n)-1$ that has degree $(1+q)(1+r) +k$ and the following number of vertices:
$$
    \left(\sum_{i=0}^{m-1} q^i \right)\left(\sum_{i=0}^{n-1} r^i \right) + k(q^{m/2}+1)(r^{n/2}+1).     $$
\end{corollary}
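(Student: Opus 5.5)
The plan is to argue by induction on the number of replication rounds $k$, tracking three quantities: the order, the degree, and the diameter. The base graph is $G_0=\overline{P(\mathbb{G}_m(q,q) \odot \mathbb{G}_n(r,r))}$. By Lemma~\ref{lemma:stats_gen_poly_kron_nge3}, it has $(\sum_{i=0}^{m-1}q^i)(\sum_{i=0}^{n-1}r^i)$ vertices and degree $(q+1)(r+1)$; this applies since $m,n\in\{4,6\}$, so neither equals $2$. By Corollary~\ref{cor:main_diameter}, its diameter is $\max(m,n)-1$.

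Let $S$ be the set of absolute vertices of $G_0$. By Proposition~\ref{prop:abs_verts}, $|S|=(q^{m/2}+1)(r^{n/2}+1)$.

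The key structural input is that the vertices of $S$ are pairwise at distance at least $3$. Write an absolute vertex as $(a,b)$ under the isomorphism of Theorem~\ref{th:main_isomorphism}, with $a$ and $b$ absolute in the factors. I would prove this step directly rather than only cite it.

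Suppose two distinct absolute vertices $(a,b)$ and $(a',b')$ were at distance $1$ or $2$ in the Kronecker product. At distance $1$, the definition of the product forces $a\sim a'$ and $b\sim b'$. At distance $2$, it forces walks of length $2$ between $a$ and $a'$ and between $b$ and $b'$, so each coordinate pair is at distance at most $2$. Since $(a,b)\neq(a',b')$, one coordinate, say $a\ne a'$, is then a pair of distinct absolute vertices at distance $\le 2$ in $\overline{P_m}$. This contradicts Proposition~\ref{prop:abs_verts_delorme}. The argument must use loopless distances, but walks through self-loops only stay at a vertex, so the case analysis still goes through. I expect this bookkeeping between walks and paths (including loops) to be the main subtle point.

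Given pairwise distance at least $3$, each round adds a replica $v'$ for every $v\in S$, with $N(v')=N(v)$. The replicas must use neighborhoods in the loopless graph, so a replica is not adjacent to its original.

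Degree: every $v\in S$ is absolute, so by the proof of Lemma~\ref{lemma:polarity_props_diam_equality} it has degree $(q+1)(r+1)-1$ in $G_0$. Neighbors of $v$ are not absolute, since absolute vertices are never adjacent. Any non-absolute vertex $u$ lies in $N(v)$ for at most one $v\in S$, since two such $v$ would be at distance $2$. So each round raises $\deg u$ by at most $1$. A replica has the same degree as its original. After round $j$, an original absolute vertex and its earlier replicas have degree at most $(q+1)(r+1)-1$ (or this value plus rounds, if we regard the whole class as growing); more simply, all vertices have degree at most $(q+1)(r+1)+k$.

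Distance between replicas: in later rounds, the replicas of the same $v$ share neighborhoods. Replicas of distinct $v$ remain at distance $\ge3$, because replicas are twins of the originals and twins preserve distances between distinct classes. So the hypothesis persists across rounds, and the degree bound follows by induction. Equality holds at neighbors of absolute vertices, which have the full degree $(q+1)(r+1)$ initially and gain $1$ each round.

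Diameter: replication never shortens distances between old vertices, because it only adds twins. A new vertex is at the same distance as its twin from everything else, and at distance $2$ from its twin. So the diameter stays $\max(m,n)-1\ge 3$.

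Order: the vertex count increases by $|S|$ each round, which gives the stated formula.
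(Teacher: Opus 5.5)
Your proposal is correct and follows the paper's route. The paper also first shows that the absolute vertices of the product quotient are pairwise at distance at least $3$, citing Proposition~\ref{prop:abs_verts_delorme} and Theorem~\ref{th:main_isomorphism}; your coordinatewise walk argument in the looped Kronecker product spells out that step. It then notes that each replication round adds $|S|$ vertices, raises the degree of each neighbor of an absolute vertex by exactly one, and leaves the diameter unchanged. Your hedge about absolute-vertex degrees ``if we regard the whole class as growing'' is unnecessary: originals and replicas keep degree $(q+1)(r+1)-1$ throughout, because no absolute vertex is adjacent to another, so their neighborhoods never change.
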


\section{A Generalized Quadrangle Example, With New Largest Diam-3 Graphs}\label{sec:quadexample}
Here, we apply Theorem \ref{th:mb} and the replication in Section \ref{sec:replication} to generalized quadrangles. 
\subsection{Polarities of Generalized Quadrangles}\label{sec:quadrangles}
Certain generalized quadrangles $\mathbb{G}_4(q,q)$ admit the Suzuki-Tits polarity; others admit the symplectic polarity used in the constructions in \cite{delorme_french_polys_replication_85} and \cite{delorme_opp_85}. Both are discussed in~\cite{van_Maldeghem_ovoids}. The combinatorial results in this paper hold for any generalized quadrangle that admits polarity, so we do not describe specific polarities here.

The ordinary quadrangle $\mathbb{G}_4(1,1)$ was not discussed in \cite{delorme_french_polys_replication_85} for the construction of large graphs; Delorme excludes the $q=1$ case from consideration in that paper as trivial. However, our construction allows us to exploit this quadrangle as a factor in the construction of new graphs larger than any previously known.

$\mathbb{G}_4(1,1)$ does admit polarity, and every such polarity must be a reflection polarity, by Proposition~\ref{prop:thin_polarities_new}. It also satisfies the conditions of Corollary~\ref{cor:ulb}, as per Proposition~\ref{prop:polarity_gen_polys_diam_new}. This makes $\mathbb{G}_4(1,1)$ eligible as a factor graph in our quadrangle-based constructions.

\subsection{New Graphs}\label{sec:new_graphs} 
Let $\mathbb{G}_4(q,q)$ denote the incidence graph of a 
generalized quadrangle of order $(q,q)$, where $q=1$ or $2^{2m+1}$.
By Corollary~\ref{cor:product_polarity}, the
product graph admits polarity.
Let $\overline{G}$ be the polarity quotient graph $\overline{P(\mathbb{G}_4(q_1,q_1) \odot \mathbb{G}_4(q_2,q_2))}$. By Corollary~\ref{cor:main_diameter} and Lemma~\ref{lemma:stats_gen_poly_kron_nge3},
\begin{enumerate}
    \item $\Delta(\overline{G})=(1+q_1)(1+q_2)$,
    \item $D(\overline{G})=3$, and
    \item  $|V(\overline{G})|=(q_1^3+q_1^2+q_1+1)(q_2^3+q_2^2+q_2+1)$.
\end{enumerate}

\paragraph{Example 1:} Let $q_1=1$ and $q_2=q$ for some $q=2^{2m+1}$.
In this scenario, $\overline{G}$ has diameter 3, degree $2(q+1)$ and order $4(q^3+q^2+q+1)$. Thus we have constructed a family whose order asymptotically reaches $\frac{1}{2}$ of the Moore bound (Equation~(\ref{eq:MB}) in Section~\ref{sec:moore_bound}).
Note that for $q = 2^{2m+1}$ for any 
$m\in \mathbb{Z}_{\ge 0}$, this construction generates graphs with degree values that are
not covered by Delorme graphs~\cite{delorme_french_polys_replication_85}.

\begin{enumerate}
    \item For $\Delta=18$, we get a diameter-3 graph of $2340$ vertices by using
    $q=8$.

    \item For $\Delta=19$, we take the graph obtained for $\Delta=18$ and apply Propositions \ref{prop:abs_verts_delorme} and \ref{prop:abs_verts} to obtain a set of $(1^2+1)(8^2+1) =130$ absolute vertices of the polarity in $\overline{G}$. Using Corollary \ref{cor:replication}, we replicate this set to get a new graph of degree $19$ and number of vertices $2340 + 130 = 2470$.

    \item For $\Delta=20$, we construct a set of $130$ absolute vertices of the polarity using Propositions \ref{prop:abs_verts_delorme} and \ref{prop:abs_verts} and we replicate twice using Corollary \ref{cor:replication} to obtain a graph of degree 20 with $2340 + 2 \cdot 130 = 2600$ vertices.

\end{enumerate}

\paragraph{Example 2:}  Let $q_1 = 2$ and $q_2 =q$. In this scenario, $\overline{G}$ has diameter 3, degree $3(q+1)$, and order $15(q^3+q^2+q+1)$.
This family has order that asymptotically reaches $\frac{15}{27}$ of the
Moore bound. Note that for $q = 2^{2m+1}$ where $m\in \mathbb{Z}_{\ge 0}$, this 
construction generates graphs with degrees that are
not covered by the Delorme graphs in \cite{delorme_french_polys_replication_85, delorme_opp_85}.
\begin{enumerate}
    \item For $\Delta=27$, we get a diameter-3 graph of $8775$ vertices by using
    $q=8$, and find a set of $(2^2+1)(8^2+1) = 325$ absolute vertices of the polarity by Propositions \ref{prop:abs_verts_delorme} and \ref{prop:abs_verts}.
  
    \item For $\Delta = 28$, we apply replication once to obtain a diameter-3 graph with $8775 + 325 =9100$ vertices.

    \item For $\Delta = 29$, we apply replication twice to obtain a diameter-3 graph with $8775 + 2 \cdot 325 = 9425$ vertices.

    \item For $\Delta=30$, we apply replication three times to obtain a diameter-3 graph of $8775+3 \cdot 325 = 9750$ vertices.
    
\end{enumerate}
\subsection{Results}\label{sec:results}
We show some of our results in Figure~\ref{fig:delorme_sgq} and in
Tables \ref{table:new_points} and \ref{table:design_space_new}. It can be seen in Figure~\ref{fig:delorme_sgq} that in each interval $[2^a, 2^{a+2})$, there is only one $\overline{P(\mathbb{G}_4(q,q))}$ graph \cite{delorme_french_polys_replication_85}, but the number of $\overline{P(\mathbb{G}_4(q,q)) \otimes P(\mathbb{G}_4(r,r))}$ graphs increases as $a$ increases.  On the other hand, there are many more $\overline{P(\mathbb{G}(s,t) \otimes Opp(\mathbb{G}(s,t)))}$ graphs \cite{delorme_opp_85} in each interval. This is due to the plethora of generalized quadrangles $\mathbb{G}_4(s, t)$ that may be used as the base bipartite graph, whereas the generalized quadrangles used in the $\overline{P(\mathbb{G}_4(q,q))}$ and $\overline{P(\mathbb{G}_4(q,q)) \otimes P(\mathbb{G}_4(r,r))}$ constructions exist only for $q$ and $r$ either $1$ or odd powers of $2$. 
\begin{table}[!ht]
\caption{Design space of diameter-3 polarity graphs, up to degree $33$. We include several graphs derived using the replication technique from \cite{delorme_french_polys_replication_85}. 
} \label{table:design_space_new}
\noindent\adjustbox{max width=\textwidth}
{
\begin{tabular}{rrrrcc}
\toprule
\textbf{Degree} & \textbf{Size} & \textbf{MB} & \textbf{\% MB} & \textbf{Polarity quotient (without self-loops) of:}  & \textbf{Source}\\

\midrule
2  & 4    & 7 & 57.14 & $\mathbb{G}_4(1,1)$ & this paper $^1$\\
3  & 15    & 22 & 68.18 & $\mathbb{G}_4(2,2)$ & \cite{delorme_french_polys_replication_85} \\
4 & 16   & 53   & 30.19 &  
$\mathbb{G}_4(1,1) \odot Opp(\mathbb{G}_4(1,1))$ 
&\cite{delorme_opp_85}, this paper $^2$\\
\textbf{6} & \textbf{60} & \textbf{187} & \textbf{32.09} & 
$\mathbf{\mathbb{G}_4(1,1)\odot \mathbb{G}_4(2,2)}$ & \textbf{this paper} \\
&&&&$\mathbf{\dots}\ ^3$\\
8 & 128   & 457 & 28.01 & $\mathbb{G}_4(1,3) \odot Opp(\mathbb{G}_4(1,3))$ &\cite{delorme_opp_85} \\
9 & 225   & 658 & 34.19 & $\mathbb{G}_4(2,2) \odot Opp(\mathbb{G}_4(2,2))$ &\cite{delorme_opp_85}, this paper $^2$\\
9  & 585    & 658 & 88.91 & $\mathbb{G}_4(8,8)$ &\cite{delorme_french_polys_replication_85} \\
15 & 1215   & 3166 & 38.38 & $\mathbb{G}_4(2,4) \odot Opp(\mathbb{G}_4(2,4))$ &\cite{delorme_opp_85} \\
16 & 1600   & 3857 & 41.48 & $\mathbb{G}_4(3,3) \odot Opp(\mathbb{G}_4(3,3))$ &\cite{delorme_opp_85} \\
\textbf{18} & \textbf{2340} & \textbf{5527} & \textbf{42.34} & $\mathbf{\mathbb{G}_4(1,1)\odot \mathbb{G}_4(8,8)}$ & \textbf{this paper} \\
\textbf{19} & \textbf{2470} & \textbf{6518} & \textbf{37.90} &  $\mathbf{\mathbb{G}_4(1,1)\odot \mathbb{G}_4(8,8)},$ \textbf{with replication} & \textbf{this paper} \\
\textbf{20} & \textbf{2600}   & \textbf{7621}   & \textbf{34.12} & $\mathbf{\mathbb{G}_4(1,1)\odot \mathbb{G}_4(8,8)},$ \textbf{with replication} &  \textbf{this paper} \\
&&&&$\mathbf{\dots}\ ^3$\\
24 & 6144 & 13273 & 46.29 & $\mathbb{G}_4(3,5) \odot Opp(\mathbb{G}_4(3,5))$ &\cite{delorme_opp_85} \\
25 & 7225 & 15026 & 48.08 & $\mathbb{G}_4(4,4) \odot Opp(\mathbb{G}_4(4,4))$ &\cite{delorme_opp_85} \\
\textbf{27} & \textbf{8775}& \textbf{18982}  & \textbf{46.23} & $\mathbf{\mathbb{G}_4(2,2)\odot \mathbb{G}_4(8,8)}$ & \textbf{this paper} \\
\textbf{28} & \textbf{9100}& \textbf{21197}  & \textbf{42.93} & $\mathbf{\mathbb{G}_4(2,2)\odot \mathbb{G}_4(8,8)},$ \textbf{with replication} & \textbf{this paper} \\
\textbf{29} & \textbf{9425}& \textbf{23578}  & \textbf{39.97} & $\mathbf{\mathbb{G}_4(2,2)\odot \mathbb{G}_4(8,8)},$ \textbf{with replication} & \textbf{this paper} \\
\textbf{30} & \textbf{9750}   & \textbf{26131}  & \textbf{37.31} & $\mathbf{\mathbb{G}_4(2,2)\odot \mathbb{G}_4(8,8)},$ \textbf{with replication} & \textbf{this paper} \\
&&&&$\mathbf{\dots}\ ^3$\\
33 & 33825  & 34882  & 96.97 & $\mathbb{G}_4(32,32)$ &\cite{delorme_french_polys_replication_85} 
\\ \bottomrule
\end{tabular}
}
\raggedright
\footnotesize
{$^1$ The path graph on four vertices may be built as $\overline{P(\mathbb{G}_4(1,1))}$ using the reflection polarity, as in Figure~\ref{fig:gq_1_1_and_incidence_new}.\\
$^2$ Known polarity quotients from \cite{delorme_opp_85} of products of graphs of orders $s=t=2^{2m+1}$ may be built using the constructions in this paper, since $\mathbb{G}_4(q,q) \cong Opp(\mathbb{G}_4(q,q)),$ and since $\mathbb{G}_4(2^{2m+1},2^{2m+1})$ admits polarity.\\
$^3$ Further replications. 

} 
\end{table}

We note that our construction adds new and larger entries to the Degree-Diameter Table at diameter $3$ for degrees 18, 19, and 20, the largest degrees that appear in that Table, as seen in Figure~\ref{fig:comb_wiki} and Table~\ref{table:new_points}.

Our constructions achieve the Moore bound asymptotically, and produce some new diameter-3 graphs that are larger than previously known.
The design space includes more degrees than those of the Delorme graphs discussed in \cite{delorme_french_polys_replication_85}. Although degree coverage is sparser than that in \cite{delorme_opp_85},  the graphs here cover different degrees, thus building new large graphs and enlarging the set of degrees for which near-Moore-bound graphs exist for diameters $2,3$ and $5$. 

\section{Conclusions}
We have shown a structural compatibility between the Kronecker product and the polarity quotient on bipartite graphs admitting polarity, and using this, have found new families of graphs, of diameters $2,$ $3$ and $5$, approaching the Moore bound on graph size. Some of the graphs in these families have size larger than any known, to the best of our knowledge. 

The Moore bound is close to the best known bound for the degree-diameter problem, with better bounds differing by only $1$ or $2$, with a few special cases having slightly higher defects. However, there are few known families of graphs that asymptotically approach the Moore bound.

The existence of these families might encourage the conjecture that the Moore bound is reasonably tight and a more general set of asymptotic Moore graphs might exist. On the other hand, they could be seen as specific exceptions to a much tighter, undiscovered bound in the general case. 
It is interesting to note that many if not all of the known families of graphs that asymptotically approach or meet the Moore bound exploit the structure of generalized polygons.

There are immediate applications of this theoretical work. One area of interest is that of high-performance computer networks, where large low-diameter topologies have been proposed to meet the performance and scaling requirements for modern datacenters and supercomputers \cite{ besta2014slim,polarfly_sc22, PolarStar_23,lei2020bundlefly}. 
Such a network may be modeled as an undirected graph, with graph vertices as computational nodes, and graph edges as links between these nodes. Low diameter is desirable as it gives better network latency, and a large number of nodes gives higher compute performance to the system.
Successful approaches to the  degree-diameter problem speak directly to the design of optimal high-performance computing networks.

\section{Acknowledgements}This research was supported in part by the Picker Interdisciplinary Science Institute at Colgate University,
and also by the U.S. Department of Energy through Los Alamos National Laboratory, operated by Triad National Security, LLC, for the U.S. DOE (Contract no. 89233218CNA000001). The U.S. Government retains an irrevocable, nonexclusive, royalty-free license to publish, translate, reproduce, use, or dispose of the published form of this work and to authorize others to do the same.
This paper has been assigned the LANL identification number LA-UR-26-27430.

\clearpage
\bibliographystyle{plain}
\bibliography{references}

@book{gratzer_universal_alg,
  title={Universal Algebra},
  author={Gr{\"a}tzer, G.},
  isbn={9780387903552},
  edition = {2nd},
  url={https://books.google.com/books?id=YrY3xgEACAAJ},
  year={1979},
  address={New York, NY, USA},
  publisher={Springer New York}
}

@incollection{diam_kronecker_orig,
title = {Subdirect product of bipartite graphs},
editor = {Hajnal, A. and Lov{\'a}sz, L. and S{\'o}s, V.T.},
booktitle = {Finite and Infinite Sets, Volume II},
publisher = {North-Holland},
address = {Budapest, Hungary},
pages = {857-866},
year = {1984},
author = {P. Hell}
}

@article{diam_kronecker,
author = {Pu\v{s}, V.},
journal = {Comment. Math. Univ. Carol.},
language = {eng},
number = {2},
pages = {233-239},
publisher = {Charles University in Prague, Faculty of Mathematics and Physics},
title = {A remark on distances in products of graphs},
url = {http://eudml.org/doc/17539},
volume = {028},
year = {1987},
}

@article{bipartition_reversing_involutions_08,
title = {On direct product cancellation of graphs},
journal = {Discrete Math.},
volume = {309},
number = {8},
pages = {2538-2543},
year = {2009},
issn = {0012-365X},
doi = {https://doi.org/10.1016/j.disc.2008.06.004},
url = {https://www.sciencedirect.com/science/article/pii/S0012365X08003889},
author = {R. Hammack}
}

@article{bipartition_reversing_involutions_10,
author = {Abay-Asmerom, G. and Hammack, R. and Larson, C. and Taylor, D.},
year = {2010},
pages = {2042-2052},
title = {Direct Product Factorization of Bipartite Graphs with Bipartition-reversing Involutions},
volume = {23},
journal = {SIAM J. Discrete Math.},
doi = {10.1137/090751761}
}

@article{lw_polarity_graphs,
author = {Lazebnik, F. and Woldar, A.},
year = {2001},
pages = {65-86},
title = {General properties of some families of graphs defined by systems of equations},
volume = {38},
journal = {J. Graph Theory},
doi = {10.1002/jgt.1024}
}

@article{Birkhoff1944,
  author  = {G. Birkhoff},
  title   = {Subdirect Unions in Universal Algebra},
  journal = {Bull. Amer. Math. Soc.},
  volume  = {50},
  number  = {10},
  year    = {1944},
  pages   = {764--768},
  doi     = {10.1090/S0002-9904-1944-08235-9}
}

@article{congruences_no_loop,
author = {I. Broere and J. Heidema and L. Pretorius},
title = {Graph congruences and what they connote},
journal = {Quaest. Math.},
volume = {41},
number = {8},
pages = {1045--1059},
year = {2018},
publisher = {Taylor \& Francis},
doi = {10.2989/16073606.2017.1419388},
URL = { 
    
        https://doi.org/10.2989/16073606.2017.1419388
},
eprint = {     
        https://doi.org/10.2989/16073606.2017.1419388
}
}

@article{congruences_loop_1,
author = {Veldsman, S.},
year = {2020},
pages = {123-132},
title = {Congruences and subdirect representations of graphs},
volume = {8},
journal = {Electron. J. Graph
Theory Appl. (EJGTA)},
doi = {10.5614/ejgta.2020.8.1.9}
}

@article{congruences_loop_2,
author = {Broere, I. and Heidema, J. and Veldsman, S.},
year = {2020},
pages = {1067--1084},
title = {Congruences and {H}oehnke radicals on graphs},
volume = {40},
journal = {Discuss. Math. Graph Theory},
doi = {10.7151/dmgt.2166}
}

@book{hammack2011handbook,
  title={Handbook of Product Graphs},
  author={Hammack, R. and Imrich, W. and Klav\v{z}ar, S.},
  isbn={9781439813058},
  series={Discrete Mathematics and Its Applications},
  edition = {2nd},
  address = {Boca Raton, FL},
  url={https://books.google.com/books?id=WiB6UO1nqHAC},
  year={2011},
  publisher={CRC Press}
}

@article{gen_poly_orig_tits,
author = {Tits, J.},
year = {1959},
pages = {14–60},
title = {Sur la trialit\'e et certains groupes qui s’en d\'eduisent},
volume = {2},
journal = {Publ. Math. Inst. Hautes
\'Etudes Sci. },
doi = {https://doi.org/10.1007/BF02684706}
}

@incollection{THAS_gen_polys,
title = {{G}eneralized {P}olygons},
editor = {F. Buekenhout},
booktitle = {Handbook of Incidence Geometry},
publisher = {North-Holland},
address = {Amsterdam},
pages = {383-431},
year = {1995},
isbn = {978-0-444-88355-1},
doi = {https://doi.org/10.1016/B978-044488355-1/50011-6},
url = {https://www.sciencedirect.com/science/article/pii/B9780444883551500116},
author = {J. A. Thas}
}

@book{GenPolys_VanMaldgehem,
  title={Generalized Polygons},
  author={Van Maldeghem, H.},
  isbn={9783034802703},
  lccn={2011941498},
  series={Modern Birkh{\"a}user Classics},
  url={https://books.google.com/books?id=S03hohjz7n4C},
  year={2012},
  publisher={Springer Basel}
}

@ARTICLE{lozpineda2010,
  author={Loz, E. and Pineda-Villavicencio, G.},
  journal={Comput. J.}, 
  title={New Benchmarks for Large-Scale Networks with Given Maximum Degree and Diameter}, 
  year={2010},
  volume={53},
  number={7},
  pages={1092-1105},
  doi={10.1093/comjnl/bxp091}}

@article{feit_higman,
title = {The nonexistence of certain generalized polygons},
journal = {J. Algebra},
volume = {1},
number = {2},
pages = {114-131},
year = {1964},
issn = {0021-8693},
doi = {https://doi.org/10.1016/0021-8693(64)90028-6},
url = {https://www.sciencedirect.com/science/article/pii/0021869364900286},
author = {W. Feit and G. Higman}
}

@article{iso_comp_kronecker_bipartite_2006,
title = {Proof of a conjecture concerning the direct product of bipartite graphs},
journal = {European J. Combin.},
volume = {30},
number = {5},
pages = {1114-1118},
year = {2009},
note = {Special Issue on Metric Graph Theory},
issn = {0195-6698},
doi = {https://doi.org/10.1016/j.ejc.2008.09.015},
url = {https://www.sciencedirect.com/science/article/pii/S0195669808001820},
author = {R. Hammack}
}

@article{iso_comp_kronecker_bipartite_1997,
  title={Isomorphic components of {K}ronecker product of bipartite graphs},
  author={P. K. Jha and S. Klav\v{z}ar and B. Zmazek},
  journal={Discuss. Math. Graph Theory},
  year={1997},
  volume={17},
  pages={301-309},
  url={https://api.semanticscholar.org/CorpusID:2968557}
}

@article{kronecker_1962,
author = {Weichsel, P.},
year = {1962},
pages = {47--52},
title = {The {K}ronecker Product of Graphs},
volume = {13},
journal = {Proc. Amer. Math. Soc. },
doi = {10.2307/2033769}
}

@article{deg_diam_pub,
  title={New results on the degree-diameter problem for undirected graphs},
  author={F. Comellas},
  journal={Electron. J. Graph Theory Appl.},
  year={2025},
  volume={13},
  pages={211-215},
}

@misc{deg_diam_table_2026,
  author = {F. Comellas},
  title = {Degree--Diameter Table of Large Graphs},
  howpublished = {\url{https://web.mat.upc.edu/francesc.comellas/delta-d/taula_delta_d.html}},
  note = {Online resource maintained by the author. Accessed 14 Aug 2026},
  year = {2026}
}

@inproceedings{PolarStar_23,
      title={Polar{S}tar: Expanding the Scalability of Diameter-3 Networks}, 
      author={K. Lakhotia and L. Monroe and K. Isham and M. Besta and N. Blach and T. Hoefler and F. Petrini},
      year={2024},
pages ={345-357},
    booktitle = {{Proceedings of the 36th ACM Symposium on Parallelism in Algorithms and Architectures}},
    location = {Nantes, France},
    series = {SPAA '24}
}

@book{paynethas,
  author       = {Payne, S. E. and Thas, J. A.},
  isbn         = {{978-3-03719-066-1}},
  language     = {{eng}},
  pages        = {{xi + 287}},
  publisher    = {{European Mathematical Society}},
  title        = {{Finite Generalized Quadrangles}},
  year         = {{2009}},
}

@inproceedings{polarfly_sc22,
  title={Polar{F}ly: A Cost-Effective and Flexible
Low-Diameter Topology},
  author={Lakhotia, K. and Besta, M. and Monroe, L. and Isham, K. and Iff, P. and Hoefler, T. and Petrini, F.},
  booktitle={{Proceedings of the International Conference on High Performance Computing, Networking, Storage and Analysis (SC22)}},
  pages={1--15},
  year={2022},
  organization={IEEE}
}

@inproceedings{lei2020bundlefly,
  title={Bundlefly: A low-diameter topology for multicore fiber},
  author={Lei, F. and Dong, D. and Liao, X. and Duato, J.},
  booktitle={Proceedings of the 34th ACM International Conference on Supercomputing (ICS '20)},
  year={2020}
}

@inproceedings{besta2014slim,
  title={Slim {F}ly: A cost effective low-diameter network topology},
  author={Besta, M. and Hoefler, T.},
  booktitle={Proceedings of the International Conference for High Performance Computing, Networking, Storage and Analysis (SC14)},
  pages={348--359},
  year={2014},
  organization={IEEE}
}

@article{delorme_french_polys_replication_85,
    title={{Grands Graphes de Degr\'{e} et Diam\`{e}tre Donn\'{e}s}},
    author={C. Delorme},
    journal={European J. Combin.},
    volume={6},
    pages={291--302},
    year={1985}
}

@article{Bannai1973OnFM,
  title={On finite {M}oore graphs},
  author={E. Bannai and T. Ito},
  journal={Journal of the Faculty of Science, the University of Tokyo. Sect. 1 A, Mathematics},
  year={1973},
  volume={20},
  pages={191-208}
}

@article{Damerell1973OnMG,
  title={On {M}oore graphs},
  author={R. M. Damerell},
  journal={Proc. Camb. Phil. Soc.},
  volume={74},
  pages={227-236},
  year={1973}
}

@ARTICLE{hoffmansingleton1960,
  author={Hoffman, A. J. and Singleton, R. R.},
  journal={IBM Journal of Research and Development}, 
  title={On {M}oore Graphs with Diameters 2 and 3}, 
  year={1960},
  volume={4},
  number={5},
  pages={497-504},
  doi={10.1147/rd.45.0497}
}

@article{delorme_opp_85,
  title={Large bipartite graphs with given degree and diameter},
  author={Delorme, C.},
  journal={J. Graph Theory},
  volume={9},
  number={3},
  pages={325--334},
  year={1985},
  publisher={Wiley Online Library}
}

@misc{comb_wiki_degdiam_general,
author = {Loz, E. and P\'erez-Ros\'es, H. and G. Pineda-Villavicencio},
title  = {The Degree-Diameter Problem for General Graphs},
year   = {2022},
howpublished    = {\url{http://www.combinatoricswiki.org/wiki/The_Degree_Diameter_Problem_for_General_Graphs}},
    note = {Accessed 20 May 2026}

}

@article{miller2012moore,    title={Moore Graphs and Beyond: A survey of the Degree/Diameter Problem},
volume={Dynamic Surveys, \#DS14}, 
url={https://www.combinatorics.org/ojs/index.php/eljc/article/view/DS14}, 
    DOI={10.37236/35}, 
    journal={Electron. J. Comb.}, author={Miller, M. and \v{S}ir\'{a}\v{n}, J.}, year={2013}, 
    }

@article{erdosrenyi1962,
author = {Erd{\H o}s, P. and R\'enyi, A.},
year = {1962},
pages = {623-641},
title = {On a problem in the theory of graphs},
volume = {7A},
journal = {Publ. Math. Inst. Hungary Acad.
Sci.}
}

@article{brown_1966, 
title={On Graphs that do not Contain a {T}homsen Graph}, 
volume={9}, 
DOI={10.4153/CMB-1966-036-2}, number={3}, journal={Canad. Math. Bull.}, publisher={Cambridge University Press}, author={Brown, W. G.}, year={1966}, pages={281–285}
}

@article{van_Maldeghem_ovoids,
author = {Van Maldeghem, H.},
year = {1997},
month = {Mar},
pages = {192--202},
title = {A geometric characterization of the perfect {S}uzuki-{T}its ovoids.},
volume = {58},
journal = {J. Geom.}
}

\end{document}